\author{Mathieu Molitor
\\ \it\small{Chaire des structures alg\'ebriques et g\'eom\'etriques}
\\ \it\small{Facult\'e des sciences de base -- Institut de math\'ematiques B}
\\ \it\small{Ecole Polytechnique F\'ed\'erale de Lausanne}
\\ \small{\it{e-mail:}}\,\,\url{pergame.mathieu@gmail.com}
}
\title{The group of unimodular automorphisms of a principal bundle and 
the Euler-Yang-Mills equations}
\date{}
\begin{document}

\newtheorem{lemma}{Lemma}[section]
\newtheorem{definition}[lemma]{Definition}
\newtheorem{proposition}[lemma]{Proposition}
\newtheorem{corollary}[lemma]{Corollary}
\newtheorem{theoreme}[lemma]{Theorem}
\newtheorem{remarque}[lemma]{Remark}
\newtheorem{example}[lemma]{Example}

\maketitle

\begin{abstract}
	Given a principal bundle $G\hookrightarrow P\rightarrow B$ (each being compact, 
	connected and oriented) and a $G$-invariant metric $h^{P}$ on $P$ which induces 
	a volume form $\mu^{P}\,,$ we consider the group of all unimodular automorphisms
	 $\textup{SAut}(P,\mu^{P}):=\{\varphi\in\textup{Diff}(P)\,|\,\varphi^{*}\mu^{P}=
	\mu^{P}\,\,\text{and}\,\,\varphi\,\,\text{is}\,\,G\text{-equivariant}\}$
	of $P\,,$ and determines its Euler equation \`a la Arnold. The resulting equations 
	turn out to be (a particular case of) the Euler-Yang-Mills equations of an 
	incompressible classical charged ideal fluid moving on $B\,.$\\
	It is also shown that the group $\textup{SAut}(P,\mu^{P})$ is an extension of a 
	certain volume preserving diffeomorphisms group of $B$ by the gauge group 
	$\textup{Gau}(P)$ of $P\,.$
\end{abstract}


%

\section{Introduction}
	Since \cite{Arnold}, it is well known that an appropriate configuration 
	space for the study of equations of hydrodynamical type (more precisely, 
	the incompressible Euler equations of an incompressible fluid) on a 
	Riemannian manifold $(M,g)$ endowed with a volume form $\mu$ 
	($\mu$ being not necessarily induced by the metric $g$), is given 
	by the group of all unimodular diffeomorphisms $\text{SDiff}(M,\mu):=
	\{\varphi\in\text{Diff}(M)\,|\,\varphi^{*}\mu=\mu\}$ of $M\,.$ 
	This group is --in a suitably chosen sense-- an infinite dimensional 
	Lie group whose Lie algebra $\mathfrak{X}(M,\mu):=\{X\in 
	\mathfrak{X}(M)\,\vert\,\text{div}_{\mu}\,(X)=0\}$ 
	is the space of divergence free vector fields endowed with the opposite of the 
	usual vector field bracket, and if $X\in \mathfrak{X}(M,\mu)$ is 
	a time-dependant divergence free vector field describing the velocity field of an
	incompressible fluid, then 
	its dynamics is governed by the incompressible Euler equation 
	$\frac{d}{dt}X+\nabla_{X}X=\nabla p\,,$ where $p$ is the pressure of the fluid.
	It turns out that this equation characterizes 
	geodesics on $\text{SDiff}\,(M,\mu)$ with respect to the natural 
	right-invariant $L^{2}$-metric on $\text{SDiff}\,(M,\mu)$ (see 
	\cite{Ebin-Marsden}), and can be seen as an Euler equation 
	(or Lie-Poisson equation) on the ``regular dual" of $\mathfrak{X}(M,\mu)$ 
	(see \cite{Arnold-Khesin}).\\
	In this paper, we propose another configuration space to study 
	the Euler equation when some symmetries are involved. Our point of 
	departure is to assume that the fluid evolves on the total space 
	of a principal bundle $G\hookrightarrow P\rightarrow B$ ($P$ 
	being connected and oriented). We assume also that the metric $h^{P}$ 
	on $P$ is $G$-invariant. In particular, the volume form $\mu^{P}$ on 
	$P$ induced by $h^{P}$ is also $G$-invariant. This leads naturally 
	to consider the group $\text{SAut}(P,\mu^{P})$ of automorphims of $P$ 
	preserving the volume form $\mu^{P}$ instead of the group 
	$\text{SDiff}(P,\mu^{P})\,.$ In other words, we assume the vector field 
	describing the velocity of the fluid to be initially $G$-invariant. 
	This approach allows us to describe the Euler equation (in the presence 
	of symmetries), as a system of two coupled equations, one living on the 
	space of free divergence (for a certain volume form) vector fields on 
	$B\,,$ the other living on the Lie algebra of the gauge group $\text{Gau}(P)$ 
	of $P\,.$ In some cases, these equations are a particular case of 
	the Euler-Yang-Mills equation of an incompressible classical charged 
	ideal fluid moving on $B\,,$ and are physically relevant for the cases 
	$G=S^{1}$ (super-conductivity equation, see \cite{Vizman}), $G=SU(2)$ 
	and $G=SU(3)$ (chromohydrodynamics, see \cite{Gibbons,Gay-Balmaz-Ratiu})\,. 
	The terminology ``Euler-Yang-Mills equation'' comes from \cite{Gay-Balmaz-Ratiu}\,.\\\\
	The second section of this paper describes the Lie group structure 
	of the group $\text{SDiff}(M,\mu)^{G}$ of all $G$-equivariant 
	diffeomorphisms of a compact manifold $M$ which preserve a volume 
	form $\mu\,.$ The arguments are essentially those used by Hamilton 
	in \cite{Hamilton}, Theorem 2.5.3, except that one has to check the 
	constructions involving the Nash-Moser inverse function theorem to 
	``respect symmetries". In section \ref{chapitre 2 section 2}, 
	the careful study of the ``structure" of a $G$-invariant volume 
	form $\mu^{P}$ on the total space $P$ of a principal bundle 
	$G\hookrightarrow P\longrightarrow B\,,$ allows us to give an 
	integration formula (Proposition \ref{proposition vraiment formule integration}) 
	which is necessary to determine the Euler equation of the group 
	$\text{SAut}(P,\mu^{P})$ (Theorem \ref{theorem equation d'euler})\,. 
	Finally in section \ref{chapitre 2 partie 4}, we show, in the same 
	spirit of \cite{Michor}, that $\text{SAut}(P,\mu^{P})$ is a 
	$\text{Gau}(P)$-principal bundle whose base is a collection of 
	connected components of $\text{SDiff}(B,V\mu^{B})\,,$ where 
	$V\mu^{B}$ is a volume form on $B$ related to the volume of 
	the orbits of $P\,.$ In particular, $\text{SAut}(P,\mu^{P})$ 
	is a non-abelian extension of this collection of connected 
	components of $\text{SDiff}(B,V\mu^{B})$ by the gauge group $\text{Gau}(P)\,.$

\section{The group $\text{SDiff}\,(M,\mu)^{G}$ as a tame Lie group}

	This section 
	deals with the differentiable and Lie group structure of some subgroups 
	of the group of smooth diffeomorphisms of a compact manifold, 
	using the infinite dimensional geometry point of view. 
	For that purpose, we will use the category of tame Fr\'echet manifolds 
	developed by Hamilton in \cite{Hamilton}, and not simply the usual category of 
	Fr\'echet manifolds\footnote{
	The reader should be aware that beyond the Banach case, several nonequivalent theories of 
	infinite dimensional manifolds coexist (see \cite{Keller}), 
	but when the modelling spaces are Fr\'echet spaces, then 
	most of these theories coincide, 
	and it is thus natural to talk, without 
	any further references, of a Fr\'echet manifold (as defined in \cite{Hamilton} 
	for example).}. This choice is motivated by the necessity to 
	use an inverse function theorem, which is available in Hamitlon's category 
	contrary to the general Fr\'echet setting.\\		
	For the convenience of the reader, we recall here the basic definitions relevant 
	for Hamilton's category : 

\begin{definition}
	\begin{description}
		\item[$(i)$] A graded Fr\'echet space $(F,\{\|\,.\,\|_{n}\}_{n\in\mathbb{N}})\,,$
			is a Fr\'echet space $F$ whose topology is defined by a collection of seminorms 
			$\{\|\,.\,\|_{n}\}_{n\in\mathbb{N}}$ which are increasing in strength:
			\begin{eqnarray}
			\|x\|_{0}\leq\|x\|_{1}\leq\|x\|_{2}\leq \cdots
			\end{eqnarray}
			for all $x\in F\,.$
		\item[$(ii)$] A linear map $L\,:\,F\rightarrow G$ between two graded Fr\'echet spaces
			$F$ and $G$ is tame (of degree $r$ and base $b$) if for all $n\geq b\,,$
			there exists a constant $C_{n}>0$ such that for all $x\in F\,,$
			\begin{eqnarray}
				\|L(x)\|_{n}\leq C_{n}\,\|x\|_{n+r}\,.
			\end{eqnarray}
		\item[$(iii)$] If $(B,\|\,.\,\|_{B})$ is a Banach space, then $\Sigma(B)$ denotes 
			the graded Fr\'echet space of all sequences $\{x_{k}\}_{k\in\mathbb{N}}$ of 
			$B$ such that for all $n\geq 0,$ 
			\begin{eqnarray}
				\|\{x_{k}\}_{k\in\mathbb{N}}\|_{n}:=\Sigma_{k=0}^{\infty}\,e^{nk}\|x_{k}\|_{B}
				<\infty\,.
			\end{eqnarray}
		\item[$(iv)$] A graded Fr\'echet space $F$ is tame if there exist a Banach space 
			$B$ and two tame linear maps $i\,:\,F\rightarrow \Sigma(B)$ and 
			$p\,:\,\Sigma(B)\rightarrow F$ such that $p\circ i$ is the identity on $F\,.$
		\item[$(v)$] Let $F,G$ be two tame Fr\'echet spaces, $U$ an open subset of 
			$F$ and $f\,:\,U\rightarrow G$ a map. We say that $f$ is a smooth tame map
			if $f$ is smooth\footnote{By smooth we mean that $f\,:\,U\subseteq F\rightarrow G$ 
			is continuous and that 
			for all $k\in\mathbb{N}\,,$ the $k$th derivative 
			$D^{k}f\,:\,U\times F\times \cdots \times F
			\rightarrow G$ exists and is jointly continuous on the product  space, such as 
			described in \cite{Hamilton}.} 
			and if for every $k\in\mathbb{N}$ and for every 
			$(x,u_{1},...,u_{k})\in U\times F\times \cdots F\,,$ there exist a neighborhood
			$V$ of $(x,u_{1},...,u_{k})$ in $U\times F\times \cdots F$ and 
			$b_{k},r_{0},...,r_{k}\in\mathbb{N}$ such that for every $n\geq b_{k}\,,$ there 
			exists $C_{k,n}^{V}>0$ such that 
			\begin{eqnarray}
				\|D^{k}f(y)\{v_{1},...,v_{k}\}\|_{n}\leq C_{k,n}^{V}\,\big(1+\|y\|_{n+r_{0}}
				+\|v_{1}\|_{n+r_{1}}+\cdots+\|v_{k}\|_{n+r_{k}}\big)\,,
			\end{eqnarray}
			for every $(y,v_{1},...,v_{k})\in V\,,$ where $D^{k}f\,:\,
			U\times F\times\cdots\times F\rightarrow G$ denotes the $k$th derivative of 
			$f\,.$
	\end{description}
\end{definition}
\begin{remarque}
	In the sequel, we will use interchangeably the notation 
	$(Df)(x)\{v\}$ or $f_{*_{x}}v$ for the first derivative of $f$ at a 
	point $x$ in direction $v\,.$
\end{remarque}
	As one may notice, tame Fr\'echet spaces and smooth tame maps form 
	a category, and it is thus natural to define a tame Fr\'echet manifold 
	as a Hausdorff topological space with an atlas of coordinates charts taking their value in
	tame Fr\'echet spaces, such that the coordinate transition functions are all
	smooth tame maps (see \cite{Hamilton}).
	The definition of a tame smooth map between tame Fr\'echet manifolds is then 
	straightforward, and we thus obtain a subcategory of the category of Fr\'echet manifolds.\\
	In order to avoid confusion, let us also precise our notion of submanifold. We will say 
	that a subset $\mathcal{M}$ of a 
	tame Fr\'echet manifold $\mathcal{N}\,,$ endowed with the trace topology, 
	is a submanifold, 
	if for every point $x\in \mathcal{M}\,,$ there exists a chart 
	$(\mathcal{U},\varphi)$ of $\mathcal{N}$ such that $x\in \mathcal{U}$ and such that 
	$\varphi(\mathcal{U}\cap \mathcal{M})=U\times \{0\}\,,$ where 
	$\varphi(\mathcal{U})=U\times V$ is a product of two open subsets of tame Fr\'echet spaces. 
	Note that a submanifold of a tame Fr\'echet manifold is also a tame Fr\'echet manifold.\\
	Finally, we define a tame Lie group $\mathcal{G}$ as a tame Fr\'echet manifold
	with a group structure such that the multiplication map 
	$\mathcal{G}\times \mathcal{G}\rightarrow \mathcal{G},\,(g,h)\mapsto gh$  and the inverse map
	$\mathcal{G}\rightarrow \mathcal{G},\,g\mapsto g^{-1}$
	are smooth tame maps. A tame Lie subgroup is defined as being a subset of 
	a tame Lie group which is a submanifold and a subgroup. A tame Lie subgroup 
	is in particular a tame Lie group.
\begin{remarque}
	The above notions of submanifolds, Lie groups and Lie subgroups are stated 
	in the framework of tame Fr\'echet manifolds, but of course, similar 
	definitions --that we adopt--  hold in the more general framework of Fr\'echet manifolds.
\end{remarque}
	For the sake of completeness, let us state here the raison d'\^{etre} of 
	tame Fr\'echet spaces and tame Fr\'echet manifolds (see \cite{Hamilton}) :
\begin{theoreme}[Nash-Moser inverse function Theorem]
	Let $F,G$ be two tame Fr\'echet spaces, $U$ an open subset of 
	$F$ and $f\,:\, U\rightarrow G$ a smooth tame map. If there exists 
	an open subset $V\subseteq U$ such that 
	\begin{description}
		\item[$(i)$] $Df(x)\,:\,F\rightarrow G$ is an linear isomorphism for all 
			$x\in V\,,$
		\item[$(ii)$] the map $V\times G\rightarrow F,\,(x,v)\mapsto 
			\big(Df(x)\big)^{-1}\{v\}$ is a smooth tame map, 
	\end{description}
	then $f$ is locally invertible on $V$ and each local inverse is a
	smooth tame map. 
\end{theoreme}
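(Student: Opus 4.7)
The plan is to prove this via a carefully damped Newton iteration, which is the standard route for Nash--Moser results; I would follow essentially Hamilton's blueprint. The naive Newton scheme $x_{k+1}=x_{k}-(Df(x_{k}))^{-1}\{f(x_{k})-y\}$ fails in the Fr\'echet setting because hypothesis $(ii)$ only provides tame estimates, so $(Df)^{-1}$ loses a fixed number $r$ of derivatives at each step, and after finitely many iterations one exhausts all available regularity. The remedy is to introduce smoothing operators and run a modified iteration.

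First I would construct a family of smoothing operators $S_{\theta}\,:\,F\rightarrow F$ indexed by $\theta\geq 1$, using the tame structure of $F$: composing the tame maps $i\,:\,F\rightarrow\Sigma(B)$ and $p\,:\,\Sigma(B)\rightarrow F$ with the obvious exponential truncation on $\Sigma(B)$ (kill the terms with $k\geq\log\theta$), one gets operators satisfying the classical pair of estimates $\|S_{\theta}x\|_{n+r}\leq C\,\theta^{r}\,\|x\|_{n}$ and $\|x-S_{\theta}x\|_{n}\leq C\,\theta^{-r}\,\|x\|_{n+r}$ for all $r\geq 0$. Together with interpolation inequalities on graded tame spaces (also standard from Hamilton), these are the technical backbone.

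Next, for a fixed $x_{0}\in V$ and $y$ close to $f(x_{0})$, I would define the modified Newton iterates
\begin{equation*}
	x_{k+1}\,:=\,x_{k}-S_{\theta_{k}}\bigl[(Df(x_{k}))^{-1}\{f(x_{k})-y\}\bigr],
	\qquad \theta_{k}=\theta_{0}^{(3/2)^{k}},
\end{equation*}
and prove by induction that, for $\theta_{0}$ large and $y$ close enough to $f(x_{0})$, the iterates remain in $V$ and satisfy bounds of the form $\|x_{k+1}-x_{k}\|_{n}\leq C_{n}\,\theta_{k}^{n-N}$ for a suitably chosen large $N$. The super-geometric growth of $\theta_{k}$ is what allows the $\theta^{r}$ loss from $(Df)^{-1}$ to be absorbed by the smoothing, and the Taylor remainder $f(x_{k+1})-y$ is controlled quadratically in the previous error, up to the smoothing defect $x-S_{\theta}x$, which is itself small in low norms. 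Summing the telescoping series shows $\{x_{k}\}$ is Cauchy in every seminorm, so it converges to some $x_{\infty}\in F$ with $f(x_{\infty})=y$.

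The delicate, and genuinely hard, step is precisely this bookkeeping: choosing $N$, $\theta_{0}$, and the exponents so that the quadratic convergence dominates the losses from both $(Df)^{-1}$ and the smoothing error, \emph{uniformly in $n$}. Once existence and uniqueness of a local inverse $g$ are established in this way, smoothness and tameness of $g$ follow by applying the same scheme to the parameter-dependent equation $f(g(y))=y$: differentiating formally gives $Dg(y)=\bigl(Df(g(y))\bigr)^{-1}$, which is smooth tame by hypothesis $(ii)$ and the chain rule for tame maps, and a straightforward induction yields tame estimates for all higher derivatives of $g$.
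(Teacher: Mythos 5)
This theorem is not proved in the paper at all: it is quoted verbatim from Hamilton's survey \cite{Hamilton}, which the author cites precisely so as not to reproduce the (long) argument. So there is no ``paper's proof'' to compare against; what you have written is an outline of Hamilton's own strategy, and as an outline it is faithful --- smoothing operators built from the tame embedding into $\Sigma(B)$, interpolation inequalities, and a Newton iteration damped by $S_{\theta_{k}}$ with super-geometrically growing $\theta_{k}$ is exactly the engine of the standard proof.

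As a \emph{proof}, however, the proposal has genuine gaps, and you have located the main one yourself: the entire content of the theorem lives in the inductive estimate $\|x_{k+1}-x_{k}\|_{n}\leq C_{n}\,\theta_{k}^{n-N}$, and you assert it rather than establish it. Without specifying the degrees and bases of the tame estimates for $f$, $D^{2}f$ and $(Df)^{-1}$, fixing $N$ in terms of them, and verifying that the quadratic Taylor remainder plus the smoothing defect close the induction uniformly in $n$, nothing has been proved; this is where every wrong ``proof'' of Nash--Moser dies. Two further points are glossed over. First, the iteration at best produces \emph{a} preimage of $y$; injectivity of $f$ near $x_{0}$ (hence uniqueness and well-definedness of the local inverse $g$) does not follow from the scheme and needs a separate elementary argument, e.g.\ writing $f(x)-f(x')=\int_{0}^{1}Df(x'+t(x-x'))\{x-x'\}\,dt$ and using the tame bound on $(Df)^{-1}$ in a low norm. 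Second, ``$Dg(y)=(Df(g(y)))^{-1}$ is smooth tame by the chain rule'' presupposes that $g$ is already known to be differentiable; in Hamilton's argument one must first prove that $g$ is continuous and tame (by re-running the estimates with $y$ as a parameter), then that it is $C^{1}$ with the stated derivative, and only then does the induction on higher derivatives become routine. None of this invalidates your plan, but the plan is the easy part of this theorem.
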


\begin{remarque}
	The Nash-Moser inverse function Theorem is important in geometric hydrodynamics, 
	since one of its most important geometric object, namely the group 
	of all smooth volume preserving diffeomorphims 
	$\textup{SDiff}(M,\mu):=\{\varphi\in\textup{Diff}(M)\,\vert\,\varphi^{*}\mu=\mu\}$ 
	of an oriented manifold $(M,\mu)\,,$ 
	can only be given a rigorous Fr\'echet Lie group structure by
	using an inverse function theorem (at least up to now).
	To our knowledge, only two authors succeeded in doing this. The first was Omori 
	who showed and used an inverse function theorem in terms of 
	ILB-spaces (``inverse limit of Banach spaces", see \cite{Omori}), and later on, Hamilton with 
	his category of tame Fr\'echet spaces together with the Nash-Moser inverse function 
	Theorem (see \cite{Hamilton}). Nowadays, it is nevertheless not uncommon 
	to find  mistakes or 
	big gaps in the literature when it comes to the differentiable 
	structure of $\textup{SDiff}(M,\mu)\,,$
	even in some specialized textbooks in infinite dimensional geometry. The case 
	of $M$ being non-compact is even worse, and of course, no proof that $\textup{SDiff}(M,\mu)$ 
	is a ``Lie group" is available in this case. 
\end{remarque}

	Now let $M$ be a compact manifold and $G$ a compact and connected 
	Lie group acting on $M\,.$ The action of $G$ is denoted by 
	$\vartheta\,:\,G\times M\rightarrow M$ and for $g\in G$, we write 
	$\vartheta_{g}\,:\,M\rightarrow M\,,\,\,x\mapsto \vartheta(g,x)\,.$

\begin{proposition}\label{diffG}
	The group $\text{Diff}\,(M)^{G}:=\{\varphi\in\text{Diff}\,(M)\,\vert\,
	\vartheta_{g}\circ\varphi=\varphi\circ\vartheta_{g},\,\forall 
	g\in G\}$ is a tame Lie subgroup of the group $\text{Diff}\,(M)\,.$ 
	Its Lie algebra is the space $\mathfrak{X}(M)^{G}:=
	\{X\in \mathfrak{X}(M)\,\vert\,\vartheta_{g_{*}}X=X\,,
	\forall g\in G\}\,.$
	\end{proposition}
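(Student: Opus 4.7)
The plan is to adapt Hamilton's construction of the tame Lie group structure on $\text{Diff}(M)$ (Theorem 2.5.3 of \cite{Hamilton}) by systematically enforcing $G$-equivariance. The starting observation is that since $G$ is compact, averaging any Riemannian metric on $M$ over the normalized Haar measure $dg$ of $G$ produces a $G$-invariant metric $\bar{g}$ whose Riemannian exponential map satisfies $\exp_{\vartheta_g(x)}(\vartheta_{g*}v)=\vartheta_g(\exp_x(v))$ for all $g\in G$, $x\in M$, $v\in T_xM$.

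With this choice, Hamilton's chart near the identity, $\Phi\colon U \to \Phi(U)\subseteq \text{Diff}(M)$ defined by $\Phi(X)(x):=\exp_x(X(x))$ on a sufficiently small neighborhood $U\subseteq \mathfrak{X}(M)$ of $0$, has the crucial property that $\Phi(X)\in\text{Diff}(M)^{G}$ if and only if $X\in\mathfrak{X}(M)^{G}$; indeed, $\vartheta_g\circ\Phi(X)(x)=\vartheta_g(\exp_x(X(x)))=\exp_{\vartheta_g(x)}(\vartheta_{g*}X(x))$ equals $\Phi(X)\circ\vartheta_g(x)=\exp_{\vartheta_g(x)}(X(\vartheta_g(x)))$ for every $g$ and $x$ precisely when $\vartheta_{g*}X=X$. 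Hence
$$\Phi^{-1}\bigl(\text{Diff}(M)^{G}\cap \Phi(U)\bigr)=U\cap\mathfrak{X}(M)^{G}.$$
To promote this to a submanifold chart in the sense of the paper, I need a tame direct sum decomposition $\mathfrak{X}(M)=\mathfrak{X}(M)^{G}\oplus W$. The natural candidate is furnished by the averaging operator
$$\mathcal{A}(X)_x:=\int_{G}(\vartheta_{g^{-1}})_{*}X_{\vartheta_g(x)}\,dg,$$
which by invariance of Haar measure is a continuous linear projection of $\mathfrak{X}(M)$ onto $\mathfrak{X}(M)^{G}$; I then take $W:=\ker\mathcal{A}$.

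The main technical obstacle is verifying that $\mathcal{A}$ is a \emph{tame} linear map, so that the splitting is a splitting of tame Fréchet spaces. This requires uniform estimates $\|\mathcal{A}(X)\|_{n}\leq C_{n}\|X\|_{n}$ for the $C^{n}$-type seminorms defining the tame topology on $\mathfrak{X}(M)$; these follow by differentiating under the integral sign and using compactness of $G$ to bound the derivatives of the action map $\vartheta\colon G\times M \to M$ uniformly in $g$. Once established, the same estimates apply to $\text{id}-\mathcal{A}$, so $W$ is a tame summand. Post-composing $\Phi$ with the linear isomorphism $\mathfrak{X}(M)\cong \mathfrak{X}(M)^{G}\oplus W$ then yields a chart sending $\text{Diff}(M)^{G}\cap\Phi(U)$ precisely onto $(U\cap\mathfrak{X}(M)^{G})\times\{0\}$, as required by the definition of a submanifold.

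Charts at an arbitrary $\varphi_{0}\in\text{Diff}(M)^{G}$ are obtained by right-translating the identity chart by $\varphi_{0}$; this is a smooth tame diffeomorphism of $\text{Diff}(M)$ that carries $\text{Diff}(M)^{G}$ to itself, since $\text{Diff}(M)^{G}$ is a subgroup. Combined with the fact that the restrictions of the multiplication and inversion of $\text{Diff}(M)$ to $\text{Diff}(M)^{G}$ are automatically smooth tame, this produces the tame Lie subgroup structure. Finally, the chart at the identity identifies the tangent space $T_{\text{id}}\text{Diff}(M)^{G}$ with $\mathfrak{X}(M)^{G}$, which, with the bracket inherited from $\text{Diff}(M)$ (the opposite of the usual vector field bracket), is therefore the Lie algebra.
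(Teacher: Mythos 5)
Your proposal is correct and follows essentially the same route as the paper: construct the identity chart from the exponential map of a $G$-invariant (averaged) metric, use equivariance of $\exp$ to show the chart maps $\text{Diff}(M)^{G}$ onto the $G$-invariant vector fields, split $\mathfrak{X}(M)$ off $\mathfrak{X}(M)^{G}$ by an averaging projection to get a tame submanifold chart, and translate. The only cosmetic difference is that the paper averages $1$-forms and transfers the projection to vector fields via the $G$-invariant metric, whereas you average vector fields directly; by $G$-invariance of the metric these projections coincide.
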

\begin{proof} 
	Choose a $G$-invariant metric $h$ on $M$ 
	and define a map $pr\,:\,\Omega^{1}(M)\rightarrow 
	\Omega^{1}(M),\,\theta\mapsto\theta^{G}$ by
	$$
	\theta^{G}_{x}(X_{x}):=\dfrac{1}{\text{Vol}(G)}\,\int_{G}\,
	(\vartheta_{g}^{*}\theta)_{x}(X_{x})\,\nu^{G}\,,
	$$
	where $X_{x}\in T_{x}M\,.$ Since $pr$ is a continuous 
	projection, we have the following topological direct sum :\\
	$$
	\Omega^{1}(M)=\Omega^{1}(M)^{G}\oplus\text{ker}(pr),
	$$
	and as $h$ is $G$-invariant,
	\begin{eqnarray}
		\mathfrak{X}(M)=\mathfrak{X}(M)^{G}\oplus\text{ker}
		(\widetilde{pr}),\label{decomposition}
	\end{eqnarray}
	where $\widetilde{pr}\,:\,\mathfrak{X}(M)\rightarrow\mathfrak{X}(M)^{G}$ 
	is the projection obtained from $pr$ using the duality between $TM$ 
	and $T^{\overset{*}{\text{}}}M$ via the metric $h\,.$ Notice that 
	the decomposition \eqref{decomposition} implies that $\mathfrak{X}(M)^{G}$ 
	is a tame Fr\'echet space (it's a Fr\'echet space because 
	$\mathfrak{X}(M)^{G}$ is closed in $\mathfrak{X}(M)$ and it's also a 
	tame space because $\mathfrak{X}(M)$ is tame, see \cite{Hamilton}, 
	Definition 1.3.1 and Corollary 1.3.9).\\\\
	Let $(\mathcal{U},\varphi)$ be the ``standard'' chart of $\text{Diff}\,(M)$ 
	at the identity element $Id_{M}$ obtained using the metric $h\,,$ i.e., 
	$\varphi(\mathcal{U})\subseteq \mathfrak{X}(M)$ and $\varphi^{-1}(X)(x)=
	\text{exp}_{x}\,(X_{x})$ for $X\in \varphi(\mathcal{U})\subseteq 
	\mathfrak{X}(M)$ and $x\in M\,.$\\
	Restricting $\mathcal{U}$ if necessary, we may assume $\varphi(\mathcal{U})=
	U_{1}\times U_{2}$ where $U_{1}$ is an open subset of $\mathfrak{X}(M)^{G}$ 
	and $U_{2}$ an open subset of $\text{ker}(\widetilde{pr})\,.$ 
	From the $G$-invariance of $h\,,$ we also have :
	\begin{description}
		\item[$(i)$]  if $X\in U_{1}\,,$ then $\varphi^{-1}(X)\in 
			\text{Diff}\,(M)^{G}\,,$
		\item[$(ii)$]  $\text{exp}_{\vartheta_{g}(x)}\,
			(\vartheta_{g})_{*_{x}}X_{x}=\vartheta_{g}\,\big(\text{exp}_{x}\,
			(X_{x})\big)$ for all $x\in M\,,$ for all $X_{x}\in T_{x}M$ and for all $g\in G\,.$
	 \end{description}
	From (i)\, we get $\varphi^{-1}(U_{1}\times\{0\})\subseteq
	\mathcal{U}\cap\text{Diff}\,(M)^{G}\,.$\\
	On the other hand, if $X\in \varphi(\mathcal{U})$ is such 
	that $\varphi^{-1}(X)\in \mathcal{U}\cap\text{Diff}\,(M)^{G}\,,$ 
	then for all $g\in G\,:$
	\begin{eqnarray*}
		\vartheta_{g}\circ\big(\varphi^{-1}(X)\big)=
		\big((\varphi^{-1}(X)\big)\circ\vartheta_{g}\,\,\Rightarrow
		\,\,\vartheta_{g}(\text{exp}_{x}\,(X_{x}))=
		\text{exp}_{\vartheta_{g}(x)}\,X_{\vartheta_{g}(x)}\,\,\,\forall x\in M\,.
	\end{eqnarray*}
	Using (ii)\,, we then easily get
	$$
		X_{\vartheta_{g}(x)}=(\vartheta_{g})_{*_{x}}\,X_{x}\,\,\,\forall g\in G\,,
	$$
	i.e., $X\in \mathfrak{X}(M)^{G}\,.$ Therefore 
	$\varphi^{-1}(U_{1}\times \{0\})=\mathcal{U}\cap\text{Diff}\,(M)^{G}\,.$ 
	The group $\text{Diff}\,(M)^{G}$ is thus a tame submanifold of 
	$\text{Diff}\,(M)$ near $Id_{M}$ and by translations, 
	$\text{Diff}\,(M)^{G}$ becomes a tame Lie subgroup of 
	$\text{Diff}\,(M)^{G}\,.$
\end{proof}
\begin{proposition}
\label{SDiff G}
	If $\mu$ is a $G$-invariant volume form on $M\,,$ then the 
	group $\text{SDiff}\,(M,\mu)^{G}:=\{\varphi\in \text{SDiff}\,(M,\mu)\,
	\vert\,\vartheta_{g}\circ\varphi=\varphi\circ\vartheta_{g},\,\forall g\in G\}$ 
	is a tame Lie subgroup of both $\text{Diff}\,(M)^{G}$ and 
	$\text{SDiff}\,(M,\mu)\,.$ Its Lie algebra is the space 
	$\mathfrak{X}(M,\mu)^{G}:=\mathfrak{X}(M,\mu)\cap\mathfrak{X}(M)^{G}\,.$
\end{proposition}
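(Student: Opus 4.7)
The plan is to adapt the slice-chart construction of Proposition \ref{diffG} to the volume-preserving setting, combining it with Hamilton's proof of Theorem 2.5.3 in \cite{Hamilton} for $\text{SDiff}(M,\mu)$ while keeping the whole procedure $G$-equivariant. Once a chart of $\text{Diff}(M)$ at $Id_{M}$ is produced in which $\text{SDiff}(M,\mu)^{G}$, $\text{Diff}(M)^{G}$ and $\text{SDiff}(M,\mu)$ correspond respectively to the closed tame subspaces $\mathfrak{X}(M,\mu)^{G}$, $\mathfrak{X}(M)^{G}$ and $\mathfrak{X}(M,\mu)$ of the model space $\mathfrak{X}(M)$, both subgroup claims follow immediately by left translation.

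The key algebraic input is a splitting of $\mathfrak{X}(M)$ compatible both with the $G$-action and with the volume-preserving condition. Using the $G$-invariant metric $h$, Hodge theory yields $\Omega^{n-1}(M)=\ker d\oplus\delta\,\Omega^{n}(M)$ and, through the tame isomorphism $X\mapsto i_{X}\mu$, a tame splitting $\mathfrak{X}(M)=\mathfrak{X}(M,\mu)\oplus\mathcal{C}$. The crucial point is that this splitting is $G$-stable: since $h$ and $\mu$ are $G$-invariant, the operators $d$, $\delta$ and $\Delta$ commute with $\vartheta_{g}^{*}$, so the associated projections commute with the averaging projector $\widetilde{pr}$ of Proposition \ref{diffG}. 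The two splittings $\mathfrak{X}(M)=\mathfrak{X}(M)^{G}\oplus\ker\widetilde{pr}$ and $\mathfrak{X}(M)=\mathfrak{X}(M,\mu)\oplus\mathcal{C}$ therefore admit a common refinement into four closed tame summands, in which $\mathfrak{X}(M,\mu)^{G}=\mathfrak{X}(M,\mu)\cap\mathfrak{X}(M)^{G}$ appears as one of the pieces.

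Next I would follow Hamilton's argument to construct a smooth tame chart $\psi$ of $\text{Diff}(M)$ at $Id_{M}$ whose inverse combines the Riemannian exponential of $h$ with a Moser-type correction that straightens pullbacks of $\mu$. Because the correction is built from the Hodge parametrix, which is $G$-equivariant, $\psi$ respects each of the four summands above. In particular $\psi$ sends $\mathfrak{X}(M,\mu)^{G}$ to $\text{SDiff}(M,\mu)^{G}$, the sum $\mathfrak{X}(M,\mu)^{G}\oplus(\mathfrak{X}(M,\mu)\cap\ker\widetilde{pr})=\mathfrak{X}(M,\mu)$ to $\text{SDiff}(M,\mu)$, and $\mathfrak{X}(M,\mu)^{G}\oplus(\mathcal{C}\cap\mathfrak{X}(M)^{G})=\mathfrak{X}(M)^{G}$ to $\text{Diff}(M)^{G}$. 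Restricted to a neighbourhood of $0$, this single map simultaneously provides the desired submanifold charts of $\text{SDiff}(M,\mu)^{G}$ inside $\text{Diff}(M)^{G}$ and inside $\text{SDiff}(M,\mu)$, and identifies the tangent space of $\text{SDiff}(M,\mu)^{G}$ at $Id_{M}$ with $\mathfrak{X}(M,\mu)^{G}$.

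The main obstacle is ensuring that Hamilton's Nash-Moser argument goes through in this constrained setting. Concretely, one must check that the linear operators inverted inside Hamilton's proof -- above all the Green operator of $\Delta$ used to solve the Poisson-type PDE appearing in Moser's trick -- preserve both $\mathfrak{X}(M,\mu)$ and $\mathfrak{X}(M)^{G}$, and that their restrictions to $G$-invariant data remain tame with the same degree. This follows from the $G$-invariance of $h$, which forces the Green operator to commute with $\vartheta_{g}^{*}$ and thus with the tame averaging operator $pr$ of Proposition \ref{diffG}; once this compatibility is in place, Hamilton's proof transfers essentially verbatim to the equivariant and volume-preserving situation.
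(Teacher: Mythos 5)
Your overall strategy coincides with the paper's: both reduce the statement to Hamilton's Theorem 2.5.3 by producing a single chart of $\text{Diff}(M)$ at $Id_{M}$ (built from the $G$-invariant metric $h$ and the straightening map $Q(X,f)=(X,\,P(X+\nabla f)-1)$) in which $\text{Diff}(M)^{G}$, $\text{SDiff}(M,\mu)$ and $\text{SDiff}(M,\mu)^{G}$ are simultaneously linearized, and both rest on a $G$-compatible Helmholtz--Hodge splitting (your ``four-summand refinement'' is exactly the content of Lemma \ref{Hodge}). Where you genuinely diverge is in how the $G$-equivariance of the straightening chart is certified. You propose to reopen Hamilton's construction and check that every operator inverted inside it --- above all the Green operator of $\Delta$ --- commutes with $\vartheta_{g}^{*}$. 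The paper instead treats the Nash--Moser output as a black box: it uses only that $Q$ itself is $G$-equivariant (an elementary consequence of the invariance of $P$) together with the \emph{injectivity} of $Q$ on a product neighbourhood $V_{1}\times V_{2}$, so that $Q(X,f\circ\vartheta_{g})=Q(X,f)$ forces $f\circ\vartheta_{g}=f$ (Lemma \ref{preservation symetrie lemme}). This is worth adopting, because your stated justification is not quite sufficient as written: the local inverse delivered by the Nash--Moser theorem is built from an iteration involving smoothing operators that are not canonically $G$-equivariant, so equivariance of the Green operator alone does not show that the local inverse of $Q$ preserves $\mathfrak{X}(M)^{G}$. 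One must either invoke uniqueness of local inverses of the equivariant map $Q$ (which is what the injectivity argument amounts to) or rerun Nash--Moser entirely inside the tame subspace $\mathfrak{X}(M)^{G}$; either repair is routine, and the paper also handles a small point you omit --- shrinking $V_{2}$ to a $G$-saturated neighbourhood $\widetilde{V}_{2}$ using compactness of $G$, so that the injectivity of $Q$ can legitimately be applied to the pair $(X,f\circ\vartheta_{g})$.
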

	In order to show this proposition, we need the following three lemmas\,.
\begin{lemma}[Helmholtz-Hodge decomposition]\label{lemme decomposition de hodge la vraie} 
	Let $(M,h)$ be a compact, connected, oriented Riemannian manifold 
	without boundary and whose volume form $\mu=d\,vol_{h}$ is the 
	volume form induced by the metric $h\,.$ Then we have the following decomposition :
	\begin{eqnarray}\label{equation decomposition de hodge la vraie}
		\mathfrak{X}(M)=\mathfrak{X}(M,\mu)\oplus\nabla \Omega^{0}(M)\,.
	\end{eqnarray}
\end{lemma}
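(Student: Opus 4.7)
The plan is to reduce the statement to the classical Hodge decomposition of $1$-forms via the musical isomorphism $\flat : \mathfrak{X}(M) \to \Omega^{1}(M)$, $X \mapsto h(X,\cdot)$, induced by $h$. Under $\flat$, the gradient $\nabla f$ corresponds exactly to $df$, since $h(\nabla f, Y) = df(Y)$; hence $\flat\bigl(\nabla\Omega^{0}(M)\bigr) = d\Omega^{0}(M)$. Moreover, because $\mu$ is the Riemannian volume form induced by $h$, the $\mu$-divergence of a vector field $X$ coincides (up to a sign) with the codifferential $\delta(X^{\flat})$, so $X \in \mathfrak{X}(M,\mu)$ if and only if $X^{\flat} \in \ker\delta$.

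Next I would invoke the classical Hodge decomposition on the compact, connected, oriented Riemannian manifold $(M,h)$,
\[
\Omega^{1}(M) \;=\; \mathcal{H}^{1}(M) \,\oplus\, d\Omega^{0}(M) \,\oplus\, \delta\Omega^{2}(M),
\]
where $\mathcal{H}^{1}(M)$ is the finite-dimensional space of harmonic $1$-forms. Since $\delta^{2}=0$ and harmonic forms are co-closed, $\ker\delta = \mathcal{H}^{1}(M) \oplus \delta\Omega^{2}(M)$, and therefore
\[
\Omega^{1}(M) \;=\; \ker\delta \,\oplus\, d\Omega^{0}(M).
\]
Applying $\sharp$ to both sides and using the two identifications above transports this to the claimed decomposition $\mathfrak{X}(M) = \mathfrak{X}(M,\mu) \oplus \nabla\Omega^{0}(M)$.

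To double-check that the sum is direct at the level of sets, suppose $\nabla f \in \mathfrak{X}(M,\mu)$. Then $\Delta f = 0$ (up to sign), and as $M$ is compact and connected, $f$ must be constant, whence $\nabla f = 0$. The only nontrivial analytic ingredient is the Hodge decomposition itself, which rests on elliptic regularity for the Hodge Laplacian; modulo this standard result, the argument is just a matter of transporting a well-known $L^{2}$-orthogonal decomposition of $1$-forms back to vector fields through the musical isomorphism determined by $h$.
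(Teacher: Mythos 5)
Your argument is correct: the musical isomorphism does carry $\nabla\Omega^{0}(M)$ to $d\Omega^{0}(M)$ and, because $\mu=d\,vol_{h}$, carries $\mathfrak{X}(M,\mu)$ to $\ker\delta\subseteq\Omega^{1}(M)$; the identity $\ker\delta=\mathcal{H}^{1}(M)\oplus\delta\Omega^{2}(M)$ follows from the usual integration-by-parts argument, and your separate check of directness (a divergence-free gradient is harmonic, hence constant on a compact connected $M$) is sound. Note, however, that the paper does not prove this lemma at all -- it simply cites Arnold and de Rham -- and the proof it \emph{does} supply for the closely related generalization (Lemma \ref{lemme hodge ameliore}, where $\nabla\Omega^{0}(M)$ is replaced by $f\nabla\Omega^{0}(M)$ for $f\in C^{\infty}(M,\mathbb{R}_{+}^{*})$) proceeds quite differently: it solves the scalar elliptic equation $\text{div}_{\mu}(f\nabla p)=\text{div}_{\mu}(X)$ directly, obtaining surjectivity of the operator $p\mapsto (df)(\nabla p)+f\triangle p$ onto $C^{\infty}_{0}(M,\mathbb{R})$ by deforming it to $\triangle$ through a continuous path of elliptic operators with constant one-dimensional kernel and invoking invariance of the index. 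Your route through the full three-term Hodge decomposition of $1$-forms is cleaner and reuses a standard theorem wholesale, but it does not extend to the weighted case, since $\text{div}_{\mu}(f\nabla p)$ is no longer $\pm\delta$ of any natural image of $p$ under the musical isomorphism; the paper's PDE/index argument is slightly more hands-on but covers both statements at once, which is what the later sections actually need.
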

	 A proof of Lemma \ref{lemme decomposition de hodge la vraie} 
	 is available in \cite{Arnold}, page 341 or \cite{de-Rham}\,. Note 
	 that in the decomposition \eqref{equation decomposition de hodge la vraie}, 
	 the space  $\nabla \Omega^{0}(M)$ is isomorphic to $C^{\infty}_{0}(M,\mathbb{R})$ 
	 where $C^{\infty}_{0}(M,\mathbb{R}):=\{f\in C^{\infty}(M,\mathbb{R})\,
	 \vert\,\int_{M}\,f\,\mu=0\}\,.$
\begin{lemma}\label{Hodge} 
	Let $G$ be a connected, compact Lie group which acts by isometries 
	on a Riemannian manifold $(M,h)\,.$ We assume $M$ compact, connected 
	and oriented, the orientation being given by $\mu:=\text{d\,vol}_{h}\,.$\\
	 If $X=X^{\mu}+\nabla f$ is the Helmholtz-Hodge decomposition of a 
	 vetor field $X\in\mathfrak{X}(M)$ (i.e. $X^{\mu}\in \mathfrak{X}(M,\mu)$ 
	 and $f\in C^{\infty}_{0}(M,\mathbb{R})$)\,, then we have the 
	 following equivalence\,:
	$$
		X\in\mathfrak{X}(M)^{G}\Leftrightarrow X^{\mu}\in 
		\mathfrak{X}(M,\mu)^{G}\,\,\text{and}\,\,f\in C^{\infty}_{0}(M,\mathbb{R})^{G}\,.
	$$
	In other words,
	\begin{eqnarray}
		\mathfrak{X}(M)^{G}=\mathfrak{X}(M,\mu)^{G}\oplus C^{\infty}_{0}(M,\mathbb{R})^{G}\,,
	\end{eqnarray}
	\begin{flushleft}
		$ \text{where}\,\,\, C^{\infty}_{0}(M,\mathbb{R})^{G}:
		=\{f\in C^{\infty}_{0}(M,\mathbb{R})\,\vert\,f\circ \vartheta_{g}= f,\,\forall g\in G\}
		$ (we denote by $\vartheta\,:\,G\times M\rightarrow M$ the action of $G$ on $M$).
	\end{flushleft}
\end{lemma}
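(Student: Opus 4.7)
The plan is to prove the nontrivial direction ($\Rightarrow$) by pushing the decomposition forward under the action and invoking uniqueness of the Helmholtz-Hodge decomposition provided by Lemma \ref{lemme decomposition de hodge la vraie}. The reverse direction is essentially a consequence of the $G$-equivariance of the gradient operator and requires no work beyond that observation.

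First I would record the two equivariance facts that drive the argument. Since $G$ acts by isometries, $\vartheta_g^{*}\mu=\mu$ and the gradient intertwines with the action: for any smooth function $f$ on $M$ one has $(\vartheta_{g})_{*}\nabla f=\nabla(f\circ\vartheta_{g^{-1}})$, because $\nabla$ is built from the metric $h$, which is preserved. Similarly, if $Y\in\mathfrak{X}(M,\mu)$, the formula $\mathcal{L}_{(\vartheta_{g})_{*}Y}\mu=(\vartheta_{g^{-1}})^{*}\mathcal{L}_{Y}\mu=0$ shows that $(\vartheta_{g})_{*}\mathfrak{X}(M,\mu)\subseteq\mathfrak{X}(M,\mu)$. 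Thus the two summands in the decomposition of Lemma \ref{lemme decomposition de hodge la vraie} are each stable under the $G$-action.

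Next I would push the decomposition $X=X^{\mu}+\nabla f$ forward by $\vartheta_{g}$. Using $G$-invariance of $X$ on the left and the intertwining formulas above on the right, I obtain
\begin{equation*}
X=(\vartheta_{g})_{*}X=(\vartheta_{g})_{*}X^{\mu}+\nabla(f\circ\vartheta_{g^{-1}}).
\end{equation*}
Both the old and new expressions are Helmholtz-Hodge decompositions of $X$, the first summand being divergence free and the second being a gradient. By the uniqueness part of Lemma \ref{lemme decomposition de hodge la vraie}, $(\vartheta_{g})_{*}X^{\mu}=X^{\mu}$, so $X^{\mu}\in\mathfrak{X}(M,\mu)^{G}$, and $\nabla(f\circ\vartheta_{g^{-1}})=\nabla f$. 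The latter means $f\circ\vartheta_{g^{-1}}-f$ is a (locally, hence globally, by connectedness of $M$) constant; since $\vartheta_{g}$ preserves $\mu$ and $f\in C^{\infty}_{0}(M,\mathbb{R})$, both functions have zero mean, forcing the constant to vanish. Hence $f\in C^{\infty}_{0}(M,\mathbb{R})^{G}$.

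The converse direction is immediate: if $X^{\mu}$ and $f$ are $G$-invariant, then the equivariance of $\nabla$ gives $(\vartheta_{g})_{*}\nabla f=\nabla(f\circ\vartheta_{g^{-1}})=\nabla f$, so $X=X^{\mu}+\nabla f$ is $G$-invariant. The only potential obstacle is the $G$-equivariance of $\nabla$ and the preservation of divergence-free vector fields by isometric actions, but both follow directly from $\vartheta_{g}^{*}h=h$ and $\vartheta_{g}^{*}\mu=\mu$; once these are in hand, uniqueness of the Hodge decomposition does all the remaining work. The resulting splitting $\mathfrak{X}(M)^{G}=\mathfrak{X}(M,\mu)^{G}\oplus C^{\infty}_{0}(M,\mathbb{R})^{G}$ (where the second summand is identified with its image under $\nabla$) then follows by restricting the decomposition of Lemma \ref{lemme decomposition de hodge la vraie} to the $G$-invariant subspaces.
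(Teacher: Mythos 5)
Your proof is correct, but it takes a genuinely different route from the paper's. The paper applies the divergence to $X=X^{\mu}+\nabla f$ to obtain $\triangle f=\text{div}(X)=\triangle(f\circ\vartheta_{g})$, and then invokes the elliptic fact that the kernel of the Laplacian on a compact connected manifold consists of the constant functions to conclude $f\circ\vartheta_{g}=f+c(g)$; the invariance of $X^{\mu}$ is only deduced at the very end, as $X^{\mu}=X-\nabla f$. You instead push the whole decomposition forward by $\vartheta_{g}$, check that each summand of the Helmholtz--Hodge splitting is stable under the isometric action, and use the directness of the sum in Lemma \ref{lemme decomposition de hodge la vraie} to match the two decompositions term by term; the constancy of $f\circ\vartheta_{g^{-1}}-f$ then follows from $\nabla\big(f\circ\vartheta_{g^{-1}}-f\big)=0$ and connectedness of $M$, with no elliptic theory needed. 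Your version is slightly more elementary and makes explicit the structural content of the lemma (the $G$-stability of the two summands); the paper's is shorter once the PDE fact about $\ker\triangle$ is granted, and both arguments finish identically by using the zero-mean normalization to kill the constant. One pedantic remark: an isometry preserves $d\,\text{vol}_{h}$ only up to sign, so the identity $\vartheta_{g}^{*}\mu=\mu$ that you need for the zero-mean step (and that the paper uses as well) really rests on the connectedness of $G$, which forces each $\vartheta_{g}$ to be orientation-preserving; it would be worth saying so explicitly.
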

\begin{proof}
	Let $X=X^{\mu}+\nabla f\in \mathfrak{X}(M)^{G}$ be 
	the Helmholtz-Hodge decomposition of a $G$-invariant vetor field. For $g\in G\,,$ we have\,:
	\begin{eqnarray}
		\text{div}\,(X)=\text{div}\,(X^{\mu})+\triangle f\,\,\Rightarrow\,\,
		\text{div}\,(X)=\triangle f\,\,\Rightarrow\,\,\text{div}\,(X)
		\circ\vartheta_{g}=\triangle f\circ\vartheta_{g}\label{eq 3}\,.
	\end{eqnarray}
	On the other hand, as $X$ and $h$ are $G$-invariant, 
	\begin{eqnarray}
		\text{div}\,(X)\circ\vartheta_{g}=\text{div}\,(X)\,\,\,\,
		\text{and}\,\,\,\,(\triangle f)\circ\vartheta_{g}=
		\triangle (f\circ \vartheta_{g})\,.\label{je suis a lausanne!!}
	\end{eqnarray}
	From \eqref{eq 3} together with \eqref{je suis a lausanne!!}, 
	we get
	\begin{eqnarray}
		\text{div}\,(X)=\triangle (f\circ \vartheta_{g})\,.\label{eq 2}
	\end{eqnarray}
	We deduce from \eqref{eq 3} and \eqref{eq 2} that $f$ and 
	$f\circ\vartheta_{g}$ satisfy the same elliptic equation on a compact 
	connected manifold, and it is well known (see for example \cite{Jost}), 
	that the kernel of the Laplacian $\triangle$ on the space 
	$C^{\infty}(M,\mathbb{R})$ is reduced to the space of constant functions. 
	Hence $f\circ \vartheta_{g}=f+c(g)$ where $c(g)\in\mathbb{R}\,,$ and 
	as $\int_{M}\,f\,\mu=0\,,$ we must have $c(g)=0$ for all $g\in G\,,$ 
	i.e. $f\in C^{\infty}_{0}(M,\mathbb{R})^{G}\,.$ It follows that 
	$X^{\mu}=X-\nabla f\in \mathfrak{X}(M,\mu)^{G}$ since $X$ and 
	$\nabla f$ are $G$-invariant.\\
	The other implication being trivial, the lemma follows.
\end{proof}		
	Let us introduce some terminology before the second lemma. 
	Let $(\mathcal{U},\varphi)$ be the ``standard'' chart of 
	$\text{Diff}\,(M)$ near the identity element $Id_{M}$ such as	
	in the proof of Proposition \ref{diffG}, constructed from a $G$-invariant 	
	metric $h$ (note that we can take $h$ such that $\mu=d\,\text{vol}_{h}$)\,.	
	For $X\in \varphi(\mathcal{U}),$ define $P(X)\in C^{\infty}(M,\mathbb{R})$ by :		
	$$
		\big(\varphi^{-1}(X)\big)^{*}\mu=P(X)\cdot\mu\,.
	$$
	Without loss of generality, we may assume the volume form $\mu$ to be 
	normalized and take $\mathcal{U}$ such that $\int_{M}\,P(X)\,\mu=1$ 
	for all $X\in \mathcal{U}\,.$ According to the Helmholtz-Hodge 
	decomposition, we have the following direct sum 
	$$
		\mathfrak{X}(M)=\mathfrak{X}(M,\mu)\oplus C^{\infty}_{0}(M,\mathbb{R})
	$$
	which allows us to define a map	
	$$
		Q\,:\,
		\left\lbrace
		\begin{matrix}
		\varphi(\mathcal{U})\subseteq \mathfrak{X}(M)=
		\mathfrak{X}(M,\mu)\oplus C^{\infty}_{0}(M,\mathbb{R})\rightarrow 
		\mathfrak{X}(M,\mu)\oplus C^{\infty}_{0}(M,\mathbb{R})\,;\\
		(X,f)\mapsto(X,\,P(X+\nabla f)-1)\,.
		\end{matrix}
		\right.
	$$
	It is shown in \cite{Hamilton}, Theorem 2.5.3, that $Q$ is invertible 
	in a neighborhood of $0$ in $\mathfrak{X}(M)\,.$ The following 
	lemma shows also that $Q$ is compatible with the symmetries of $M\,.$
\begin{lemma}\label{preservation symetrie lemme}
		For all sufficiently small neighborhoods $K$ of $0$ in $\mathfrak{X}(M),$ we have
		\begin{eqnarray}\label{preservation symetrie}
			Q\Big(K\cap \mathfrak{X}(M)^{G}\Big)=Q(K)\cap \mathfrak{X}(M)^{G}\,.
		\end{eqnarray}
\end{lemma}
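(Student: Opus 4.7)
The plan is to prove that $Q$ is $G$-equivariant with respect to the natural $G$-action on $\mathfrak{X}(M)$, and then deduce both inclusions from this equivariance together with the local injectivity of $Q$ established by Hamilton in \cite{Hamilton}, Theorem 2.5.3.

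Concretely, I would define the $G$-action on $\mathfrak{X}(M)=\mathfrak{X}(M,\mu)\oplus C^{\infty}_{0}(M,\mathbb{R})$ by $g\cdot(X,f):=(\vartheta_{g*}X,\,f\circ\vartheta_{g}^{-1})$. Since the metric $h$ is $G$-invariant, this action preserves each summand of the Helmholtz--Hodge decomposition, and by Lemma~\ref{Hodge} its fixed points coincide with $\mathfrak{X}(M)^{G}=\mathfrak{X}(M,\mu)^{G}\oplus C^{\infty}_{0}(M,\mathbb{R})^{G}$. The central claim is then $Q(g\cdot(X,f))=g\cdot Q(X,f)$ for every $g\in G$ and every $(X,f)$ in a sufficiently small neighborhood of $0$.

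To establish this, I would verify three ingredients. First, the $G$-invariance of $h$ implies that the gradient intertwines the actions on functions and vector fields, so that $\vartheta_{g*}(X+\nabla f)=(g\cdot X)+\nabla(g\cdot f)$. Second, property $(ii)$ in the proof of Proposition~\ref{diffG} immediately gives the conjugation formula $\varphi^{-1}(\vartheta_{g*}Y)=\vartheta_{g}\circ\varphi^{-1}(Y)\circ\vartheta_{g}^{-1}$ for every vector field $Y$ near $0$. Third, combining these with $\vartheta_{g}^{*}\mu=\mu$ and pulling back yields
$$P(\vartheta_{g*}Y)\cdot\mu \;=\; \bigl(\vartheta_{g}\circ\varphi^{-1}(Y)\circ\vartheta_{g}^{-1}\bigr)^{*}\mu \;=\; (P(Y)\circ\vartheta_{g}^{-1})\cdot\mu,$$
so $P(\vartheta_{g*}Y)=g\cdot P(Y)$. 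Since $g\cdot 1=1$, substituting into the definition of $Q$ produces the desired equivariance.

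From equivariance, the forward inclusion is immediate: if $(X,f)\in\mathfrak{X}(M)^{G}$, then $g\cdot Q(X,f)=Q(g\cdot(X,f))=Q(X,f)$ for every $g$, so $Q(X,f)\in\mathfrak{X}(M)^{G}$. For the reverse inclusion I would shrink $K$ so that two conditions hold simultaneously: (a) $Q|_{K}$ is a homeomorphism onto its image, guaranteed by the Nash--Moser-based inversion of \cite{Hamilton}, Theorem 2.5.3, and (b) $K$ is $G$-invariant, which can always be arranged near a fixed point of a compact continuous action. Then, for $(X,f)\in K$ with $Q(X,f)\in\mathfrak{X}(M)^{G}$, equivariance yields $Q(g\cdot(X,f))=Q(X,f)$, and injectivity of $Q|_{K}$ forces $g\cdot(X,f)=(X,f)$, giving $(X,f)\in K\cap\mathfrak{X}(M)^{G}$. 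The main delicate step is the conjugation formula for $\varphi^{-1}$ and the resulting transformation law for $P$; once these are in place, the rest is bookkeeping.
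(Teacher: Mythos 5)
Your proof is correct and follows essentially the same route as the paper: both arguments rest on the transformation law $P(\vartheta_{g*}Y)=P(Y)\circ\vartheta_{g}^{-1}$ (derived from the $G$-equivariance of the exponential chart and $\vartheta_{g}^{*}\mu=\mu$), on Lemma~\ref{Hodge} to identify the invariant part of the Helmholtz--Hodge decomposition, and on the local injectivity of $Q$ from the Nash--Moser theorem to conclude $g\cdot(X,f)=(X,f)$; your choice of a $G$-stable neighborhood plays exactly the role of the paper's $\widetilde{V}_{2}$. Packaging this as equivariance of $Q$ for a linear $G$-action is a slightly cleaner bookkeeping of the same ingredients, not a different proof.
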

\begin{proof} 
	From the inverse function Theorem of Nash-Moser, there exists 
	$W\subseteq \mathfrak{X}(M)\,,$ a neighborhood of $0$ in
	$\mathfrak{X}(M)\,,$ $V_{1}$ a neighborhood of $0$ in $\mathfrak{X}(M,\mu)$ 
	and $V_{2}$ a neighborhood of $0$ in $C^{\infty}_{0}(M,\mathbb{R})$ such that
	$$
		Q\big\vert_{V_{1}\times V_{2}}\,:\,V_{1}\times V_{2}\rightarrow W
	$$
	is a diffeomorphism. Let us make the following two observations\,:
	\begin{description}
		\item[$\bullet$] restricting $\mathcal{U}$ if necessary, we may assume
			$\varphi(\mathcal{U})=V_{1}\times V_{2}\,,$
		\item[$\bullet$] by compactness of the group $G$ and continuity 
			of the map $G\times V_{2}\rightarrow$ $C_{0}^{\infty}(M,\mathbb{R})\,,(g,f)
			\mapsto f\circ \vartheta_{g}\,,$ we can find $\widetilde{V}_{2}
			\subseteq V_{2}$ a neighborhood of $0$ in $C^{\infty}_{0}(M,\mathbb{R})$ 
			such that if
			$f\in \widetilde{V}_{2}\,,$ then $ f\circ\vartheta_{g}\in V_{2}$
			for all $g\in G\,.$
	 \end{description}
	Let us show that the map $Q$ restricted to $(V_{1}\times\widetilde{V}_{2})
	\cap \mathfrak{X}(M)^{G}$ is a diffeomorphism from 
	$(V_{1}\times\widetilde{V}_{2})\cap \mathfrak{X}(M)^{G}$ onto 
	$Q(V_{1}\times\widetilde{V}_{2})\cap \mathfrak{X}(M)^{G}\,.$ 
	For that purpose, it is sufficient to show that
	\begin{eqnarray}\label{relationeq}
		Q\big((V_{1}\times\widetilde{V}_{2})\cap \mathfrak{X}(M)^{G}\big)=
		Q(V_{1}\times\widetilde{V}_{2})\cap \mathfrak{X}(M)^{G}\,.
	\end{eqnarray}
	According to Lemma (\ref{Hodge}), and since $h$ is $G$-invariant, 
	the inclusion from the left-handside to the right-handside of 
	\eqref{relationeq} is clear. \\
	Let us show the inverse inclusion. For $(X,\,P\big(X+\nabla f)-1\big)\in 
	Q(V_{1}\times\widetilde{V}_{2})\cap \mathfrak{X}(M)^{G}\,,$ 
	we have according to Lemma (\ref{Hodge}),
	$$
		X\in \mathfrak{X}(M,\mu)^{G}\,\,\,\,\text{and}\,\,\,\,
		P(X+\nabla f)-1\in C^{\infty}_{0}(M,\mathbb{R})^{G}\,.
	$$
	Thus, for $g\in G\,:$
	\begin{eqnarray*}
		&&\Big(P(X+\nabla\,f)-1\Big)\circ \vartheta_{g}=P(X+\nabla\,f)-1\\
		&\Rightarrow& P(X+\nabla\,f)\circ \vartheta_{g}-1=P(X+\nabla\,f)-1\\
		&\Rightarrow& P\Big((\vartheta_{g})_{*}(X+\nabla\,f)\Big)
			=P(X+\nabla\,f)\,\,\,\,\,\,\,\,\,\,(P\,\,\text{is}\,\,G\text{-invariant})\\
		&\Rightarrow& P\Big((X+\nabla\,(f\circ\vartheta_{g}))\Big)=P(X+\nabla\,f)\\
		&\Rightarrow& Q(\underbrace{X}_{\in V_{1}},\,
			\underbrace{f\circ \vartheta_{g}}_{\in V_{2}})=Q(X,\,f)\\
		&\Rightarrow& f\circ \vartheta_{g}= f\,\,\,\,\,\,\,\,\,\,(Q\,\,
			\text{is a diffeomorphism on}\,\,V_{1}\times V_{2})\,.
	\end{eqnarray*}
	Hence, $(X,\,f)\in \big(V_{1}\times\widetilde{V}_{2}\big)\cap 
	\mathfrak{X}(M)^{G}$ which implies \eqref{relationeq}. It follows 
	that \eqref{preservation symetrie} holds for all sufficiently small 
	neighborhoods $K$ of 0 in $\mathfrak{X}(M)$.
\end{proof}
\begin{proof}[Proof of Proposition \ref{SDiff G}]
	Let us recall how to construct a chart centered at $Id_{M}$ of 
	the group  $\text{SDiff}\,(M,\mu)$ using the map $Q\,.$ 
	According to the proof of Theorem 2.5.3. in \cite{Hamilton} and 
	restricting the domain $\mathcal{U}$ of the chart $(\mathcal{U},\varphi)$ 
	if necessary, we can find $K_{1}\subseteq \mathfrak{X}(M,\mu)$ and 
	$K_{2}\in C^{\infty}_{0}(M,\mathbb{R})\,,$ two neighborhoods of 0 in 
	$\mathfrak{X}(M,\mu)$ and $C^{\infty}_{0}(M,\mathbb{R})$ respectively, 
	such that $Q\,:\,\varphi(\mathcal{U})\rightarrow K_{1}\times K_{2}$ 
	becomes a diffeomorphism. Then, denoting $\mathcal{U}^{S}:=\mathcal{U}\,\cap\, 
	\text{SDiff}\,(M,\mu)\,,$ one can check that 
	$\Big(\mathcal{U}^{S},\,\big(Q\big\vert_{\varphi(\mathcal{U}^{S})}\big)
	\circ \big(\varphi\big\vert_{\mathcal{U}^{S}}\big)\Big)$ is a chart of 
	$\text{SDiff}\,(M,\mu)\,,$ i.e., 
	$\Big(\big(Q\big\vert_{\varphi(\mathcal{U}^{S})}\big)\circ 
	\big(\varphi\big\vert_{\mathcal{U}^{S}}\big)\Big)^{-1}(K_{1}
	\times\{0\})=\mathcal{U}^{S}\,.$
	On the other hand, choosing $\mathcal{U}$ sufficiently small, 
	we know from Lemma \ref{preservation symetrie} that we may also 
	assume $Q(\varphi(\mathcal{U})\cap\mathfrak{X}(M)^{G})=
	Q(\varphi(\mathcal{U}))\cap\mathfrak{X}(M)^{G}\,.$ We then get 
	the following commutative diagram :
	\begin{eqnarray}\label{diagram}
		\xymatrix@!C{
			\mathcal{U}^{S} \ar@/^2.5pc/[rr]^{\displaystyle(Q\circ \varphi)\,
				\big \vert\,_{\mathcal{U}^{S}}}\ar[r]_{\displaystyle\cong}^
				{\displaystyle\varphi\big\vert_{\mathcal{U}^{S}}}\ar@{^{(}->}[d] 
				& \varphi(\mathcal{U^{S}}) \ar@{^{(}->}[d] \ar[r]_{\displaystyle\cong}
				^{\displaystyle Q\big\vert_{\varphi(\mathcal{U^{S}}) }}& 
				K_{1}\times\{0\} \ar@{^{(}->}[d]\\
			 \mathcal{U} \ar[r]_{\displaystyle\cong}^{\displaystyle \varphi}&
				\varphi(\mathcal{U})\ar[r]_{\displaystyle\cong}^
				{\displaystyle Q} &K_{1}\times K_{2}\\
			\ar@/_2.5pc/[rr]_{\displaystyle(Q\circ \varphi)\,\big \vert\,_
				{\mathcal{U}^{G}}}\mathcal{U}^{\,G}\ar@{^{(}->}[u] 
				\ar[r]_{\displaystyle\cong}^{\displaystyle\varphi\big\vert_
				{\mathcal{U}^{G}}}&\varphi(\mathcal{U})^{G}\ar@{^{(}->}[u]
				\ar[r]_{\displaystyle\cong}^{\displaystyle 
				Q\big\vert_{\varphi(\mathcal{U})^{\,G}}}
				\ar[r]_{\displaystyle\cong}&K_{1}^{\,G}\times K_{2}^{\,G}
				\ar@{^{(}->}[u]
			}
	\end{eqnarray}
	$\text{}$\\\\
	notations being obvious, for example, $\mathcal{U}^{\,G}:=\mathcal{U}\,
	\cap\,\text{Diff}\,(M)^{\,G}\,.$ Clearly,
	$$
	\Big(\mathcal{U}^{S,G},\,\big(Q\big\vert_{\varphi(\mathcal{U}^{S,G})}\big)
	\circ \big(\varphi\big\vert_{\mathcal{U}^{S,G}}\big)=
	(Q\circ\varphi)\,\big\vert\,_{\mathcal{U}^{S,G}}\Big)
	$$
	is a chart of $\text{SDiff}\,(M,\mu)^{G}$ (where $\mathcal{U}^{S,G}\,
	:=\mathcal{U}^{S}\,\cap\,\mathcal{U}^{G}$) and therefore 
	$\text{SDiff}\,(M,\mu)^{G}$ is a submanifold of $\text{Diff}\,(M)^{G}$ 
	in a neighborhood of the identity. By translations, 
	$\text{SDiff}\,(M,\mu)^{G}$ becomes a tame Lie subgroup of $\text{Diff}\,(M)^{G}\,.$\\
	The fact that $\text{SDiff}\,(M,\mu)^{G}$ is also a Lie subgroup 
	of $\text{SDiff}\,(M,\mu)$ can be proved similarly using the same techniques 
	appearing above and in Proposition (\ref{diffG}).
\end{proof}
\section{Some integration formulas for a principal bundle}\label{chapitre 2 section 2}
	Let $G\hookrightarrow P\overset{\displaystyle\pi}{\rightarrow} B$ 
	be a principal bundle and $h^{P}$ a $G$-invariant metric on $P$ 
	(we assume that $G$ and $P$ are compact and connected). In this section, 
	we shall use the following terminology\,:
	\begin{description}
		\item[$\bullet$] $\vartheta\,:\,P\times G\rightarrow P$ is 
			the right action of the structure group $G$ on the total space $P\,,$
		\item[$\bullet$] $\mathcal{O}_{x}\subseteq P$ is the orbit 
			through the point $x\in P$ for the action $\vartheta\,,$ 
		\item[$\bullet$] given $g\in G$ and $x\in P\,,$ we write 
			$\vartheta_{g}\,:\,P\rightarrow P\,,\,x\mapsto 
			\vartheta(x,g)$ and $\vartheta_{x}\,:\,G\overset{\cong}{\rightarrow} 
			\mathcal{O}_{x}\subseteq P\,,\,g\mapsto \vartheta(x,g)$ 
			for the associated maps (note that $\vartheta_{x}$ is a 
			diffeomorphism from $G$ onto $\mathcal{O}_{x}\,,$ thus, one 
			can consider the map $\vartheta_{x}^{-1}\,:\,\mathcal{O}_{x}\rightarrow G$),
		\item[$\bullet$] if $X_{x}\in T_{x}P$ for a given point $x\in P\,,$ 
			we denote by $X^{v}$ the orthogonal projection of $X_{x}$ on 
			$T_{x}\mathcal{O}_{x}$ and $X^{h}$ the component of $X_{x}$ 
			perpendicular to $T_{x}\mathcal{O}_{x}\,,$
		\item[$\bullet$] the Lie algebra of the group $G$ is denoted by 
			$\mathfrak{g}\,,$
	\end{description}
	The metric $h^{p}$ being $G$-invariant, we naturally get an induced 
	connection form $\theta\in \Omega^{1}(P,\,\mathfrak{g})$ which is 
	defined, for $x\in P$ and $X_{x}\in T_{x}P\,,$ by\,:
	\begin{eqnarray}
		\theta_{x}(X_{x}):=(\vartheta_{x}^{-1})_{*_{x}}\,X_{x}^{v}\in \mathfrak{g}\,.
	\end{eqnarray}
	In particular, one can check that
	\begin{eqnarray}\label{invariance connection}
		(\vartheta_{g})^{*}\,\theta=Ad(g^{-1})\,\theta\,,
	\end{eqnarray}
	for all $g\in G\,.$
	Recall also that for any vector field $Z\in\mathfrak{X}(B)\,,$ there 
	exists a unique horizontal lift $Z^{*}\in \mathfrak{X}(P)^{G}$ satisfying
	$\pi_{*_{x}}\,Z^{*}_{x}=Z_{\pi(x)}$ for all $x\in P$ 
	(see \cite{Kobayashi-Nomizu})\,.\\ The following easy lemma describes more 
	precisely the metric $h^{P}\,.$
\begin{lemma}\label{metrique sur P}
	There exists a metric $h^{B}$ on $B$ and an Euclidean structure 
	$h^{\mathfrak{g}}$ on the trivial bundle $P\times\mathfrak{g}$ such that :
	\begin{description}
		\item[$(i)$] $h^{P}_{x}(X_{x},Y_{x})=(\pi^{*}\,h^{B})_{x}(X_{x},Y_{x})+
			h^{\mathfrak{g}}_{x}(\theta_{x}(X_{x}),\theta_{x}(Y_{x}))$ 
			for all $x\in P$ and for all $X_{x},Y_{x}\in T_{x}P\,,$
		\item[$(ii)$] $\pi\,:\,(P,h^{P})\rightarrow(B,h^{B})$ 
			is a Riemannian submersion,
		\item[$(iii)$] $h^{\mathfrak{g}}_{\vartheta_{g}(x)}(\xi,\zeta)=
			h^{\mathfrak{g}}_{x}(\text{Ad}\,(g)\,\xi,\text{Ad}\,(g)\,\zeta)$ 
			for all $g\in G,\,x\in P$ and $\xi,\zeta\in \mathfrak{g}\,.$
	 \end{description}
\end{lemma}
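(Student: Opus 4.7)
The plan is to build $h^B$ and $h^{\mathfrak{g}}$ explicitly from $h^P$ together with the orthogonal decomposition $T_xP = (T_x\mathcal{O}_x)^\perp \oplus T_x\mathcal{O}_x$, and then read the three claims off the construction. First, I observe that because $h^P$ is $G$-invariant and the vertical distribution $V_x := T_x\mathcal{O}_x$ is obviously $G$-invariant, the horizontal distribution $H_x := V_x^\perp$ is also $G$-invariant; and by the usual property of principal bundles, $\pi_{*_x} : H_x \to T_{\pi(x)}B$ is a linear isomorphism. This lets me define
\begin{eqnarray*}
h^B_{\pi(x)}(u,v) := h^P_x(u^*_x, v^*_x),
\end{eqnarray*}
where $u^*_x, v^*_x \in H_x$ are the horizontal lifts of $u, v \in T_{\pi(x)}B$. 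The essential well-definedness check is independence of the chosen point in the fibre $\pi^{-1}(\pi(x))$: if $x' = \vartheta_g(x)$, then $G$-invariance of $H$ gives $(\vartheta_g)_{*_x} u^*_x = u^*_{x'}$, and $G$-invariance of $h^P$ then yields $h^P_{x'}(u^*_{x'}, v^*_{x'}) = h^P_x(u^*_x, v^*_x)$.

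Next I define $h^{\mathfrak{g}}$ fibrewise on the trivial bundle $P \times \mathfrak{g}$ by declaring $\theta_x : V_x \to \mathfrak{g}$ to be an isometry; concretely,
\begin{eqnarray*}
h^{\mathfrak{g}}_x(\xi, \zeta) := h^P_x\big((\vartheta_x)_{*_e}\xi, (\vartheta_x)_{*_e}\zeta\big),
\end{eqnarray*}
which makes sense since the fundamental vector fields $(\vartheta_x)_{*_e}\xi$ span $V_x$ and are precisely the vertical part that $\theta_x$ inverts.

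With these definitions, claim (i) reduces to the orthogonal decomposition $X_x = X^h_x + X^v_x$: the horizontal-horizontal contribution equals $(\pi^* h^B)_x(X_x, Y_x)$ because $\pi_*$ annihilates the vertical part and is an isometry on $H_x$ by construction, while the vertical-vertical contribution equals $h^{\mathfrak{g}}_x(\theta_x(X_x), \theta_x(Y_x))$ by the very definition of $h^{\mathfrak{g}}$. Claim (ii) is immediate: $\pi_*$ maps $H_x$ isometrically onto $T_{\pi(x)}B$, which is the definition of a Riemannian submersion.

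For claim (iii), the only nontrivial ingredient is the standard identity $\vartheta_{\vartheta_g(x)} = \vartheta_g \circ \vartheta_x \circ R_{g^{-1}}^{-1}$ for a right action; differentiating at $e$ and using $(R_g)_{*_e} \circ \mathrm{Ad}(g) = (L_g)_{*_e}$, one obtains
\begin{eqnarray*}
(\vartheta_{\vartheta_g(x)})_{*_e}\,\xi \;=\; (\vartheta_g)_{*_x} (\vartheta_x)_{*_e}\big(\mathrm{Ad}(g)\,\xi\big).
\end{eqnarray*}
Plugging this into the definition of $h^{\mathfrak{g}}_{\vartheta_g(x)}(\xi,\zeta)$ and invoking the $G$-invariance of $h^P$ to cancel $(\vartheta_g)_{*_x}$ then yields the desired relation. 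The only subtle step in the whole argument is the well-definedness of $h^B$, but this is handled cleanly by the $G$-invariance of $H$ noted at the outset; the remaining verifications are direct unpackings of the definitions.
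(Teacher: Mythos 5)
Your proof is correct. The paper itself gives no argument for this lemma --- it is introduced as ``the following easy lemma'' and stated without proof --- so there is no authorial route to compare against; your construction (declare $\pi_{*_x}\big\vert_{(T_{x}\mathcal{O}_{x})^{\perp}}$ an isometry to define $h^{B}$, check well-definedness via $G$-invariance of the horizontal distribution, and pull $h^{P}_{x}$ back along $(\vartheta_{x})_{*_{e}}$ to define $h^{\mathfrak{g}}_{x}$) is exactly the one the rest of the paper uses implicitly, e.g.\ in the identity $h^{E_{2}}_{ij}=h^{\mathfrak{g}}_{x}(\xi_{i},\xi_{j})$ of Lemma \ref{lemme p2}. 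One cosmetic slip in your step (iii): the identity $\vartheta_{\vartheta_{g}(x)}=\vartheta_{g}\circ\vartheta_{x}\circ R_{g^{-1}}^{-1}$ is wrong as written, since $R_{g^{-1}}^{-1}=R_{g}$ and the right-hand side then sends $h$ to $\vartheta(x,hg^{2})$; the correct statement is $\vartheta_{\vartheta_{g}(x)}=\vartheta_{g}\circ\vartheta_{x}\circ c_{g}$ with $c_{g}=L_{g}\circ R_{g^{-1}}$, whose differential at $e$ is precisely your displayed formula $(\vartheta_{\vartheta_{g}(x)})_{*_{e}}\,\xi=(\vartheta_{g})_{*_{x}}(\vartheta_{x})_{*_{e}}\big(\mathrm{Ad}(g)\,\xi\big)$, so the verification of $(iii)$ goes through unchanged.
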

\begin{remarque}
	The point (i) of Lemma \ref{metrique sur P} gives a decomposition of 
	the metric $h^{P}$ with respect to the horizontal and vertical 
	tangent vectors of $P\,.$
\end{remarque}
\begin{remarque}\label{remarque metrique}
	For $x\in P\,,$ $\text{ker}\,(\pi_{*_{x}})=T_{x}
	\mathcal{O}_{x}$ and the map 
	$\pi_{*_{x}}\,\big\vert_{ (T_{x}\mathcal{O}_{x})^{\perp}}
	\,:\,(T_{x}\mathcal{O}_{x})^{\perp}\rightarrow T_{\pi(x)}B$ is 
	a linear isomorphism (see \cite{Kobayashi-Nomizu})\,. In particular, 
	there exists a canonical isomorphism between the space of $G$-invariant 
	horizontal vector fields on $P$ and the space of vector fields on  $B\,.$
\end{remarque}
	Now, let us assume that $P$ and $B$ are oriented and let us denote 
	by $\mu^{P}$ and $\mu^{B}$ the natural volume forms induced 
	respectively on $P$ and $B$ by the metrics $h^{P}$ and $h^{B}\,.$ 
	As for the metric $h^{P}\,,$ we want to give a precise description 
	of the volume form $\mu^{P}\,.$
\begin{lemma}\label{metrique volume}
	Let $(E,\,h)$ be an Euclidean oriented vector space of finite 
	dimension. We assume that $E=E_{1}\oplus E_{2}$ and also that 
	$h=p_{1}^{*}h^{E_{1}}+p_{2}^{*}h^{E_{}}$ where $h^{E_{i}}$ is a 
	metric on $E_{i}$ and $p_{i}\,:\,E_{1}\oplus E_{2}\rightarrow E_{i}$ 
	the canonical projection.\\
	If $E_{1}$ is endowed with a given orientation, then 
	$$
		\mu^{E}=p_{1}^{*}\,\mu^{E_{1}}\wedge p_{2}^{*}\,\mu^{E_{2}}\,,
	$$
	where $\mu^{E}\,,\mu^{E_{i}}$ are the volume forms associated to the 
	metrics $h,\,h^{E_{i}}$ respectively (we adopt the following convention: 
	a basis $\{f_{1},...,f_{m}\}$ of $E_{2}$ is positive if and only 
	if the family $\{e_{1},...,e_{n},f_{1},...,f_{m}\}$
	is a positive basis of $E$ whenever $\{e_{1},...,e_{n}\}$ is a 
	positive basis of $E_{1}$).
\end{lemma}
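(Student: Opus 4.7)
The plan is to verify the identity by evaluating both sides on a single conveniently chosen basis of $E$. Since $\dim \Lambda^{n+m} E^{*} = 1$ (where $n = \dim E_{1}$ and $m = \dim E_{2}$), any two top-degree forms on $E$ that agree on one basis are equal.

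First I would choose a positive orthonormal basis $\{e_{1},\dots,e_{n}\}$ of $(E_{1},h^{E_{1}})$ and a positive orthonormal basis $\{f_{1},\dots,f_{m}\}$ of $(E_{2},h^{E_{2}})$. From the hypothesis $h = p_{1}^{*}h^{E_{1}} + p_{2}^{*}h^{E_{2}}$ and the fact that $p_{i}\big|_{E_{j}} = 0$ for $i\neq j$ while $p_{i}\big|_{E_{i}} = \textup{id}_{E_{i}}$, one reads off that $E_{1} \perp E_{2}$ in $(E,h)$ and that the concatenated family $\{e_{1},\dots,e_{n},f_{1},\dots,f_{m}\}$ is orthonormal in $(E,h)$. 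By the chosen orientation convention, it is moreover a positive basis of $E$, hence $\mu^{E}(e_{1},\dots,e_{n},f_{1},\dots,f_{m}) = 1$.

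Next I would compute the right-hand side on the same basis. Because $p_{1}(e_{i}) = e_{i}$, $p_{1}(f_{j}) = 0$, $p_{2}(e_{i}) = 0$, and $p_{2}(f_{j}) = f_{j}$, in the standard shuffle expansion of the wedge product
\begin{equation*}
(p_{1}^{*}\mu^{E_{1}} \wedge p_{2}^{*}\mu^{E_{2}})(e_{1},\dots,e_{n},f_{1},\dots,f_{m})
\end{equation*}
every $(n,m)$-shuffle other than the identity produces a zero factor, so only the identity shuffle survives and one gets $\mu^{E_{1}}(e_{1},\dots,e_{n})\cdot \mu^{E_{2}}(f_{1},\dots,f_{m}) = 1\cdot 1 = 1$. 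Both sides thus take the value $1$ on this positively oriented basis of $E$, which by one-dimensionality of $\Lambda^{n+m}E^{*}$ forces $\mu^{E} = p_{1}^{*}\mu^{E_{1}} \wedge p_{2}^{*}\mu^{E_{2}}$.

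There is no real obstacle in this argument; the only point requiring minimal care is the bookkeeping of the shuffle sum, specifically observing that the vanishing of $p_{i}$ on $E_{j}$ kills every non-trivial shuffle so that no combinatorial coefficient interferes with the equality.
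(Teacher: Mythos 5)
Your proof is correct and follows essentially the same route as the paper: a direct evaluation of both sides in a basis adapted to the splitting $E=E_{1}\oplus E_{2}$. The only (cosmetic) difference is that you pick orthonormal bases and evaluate on a single basis, which lets you avoid the Gram-determinant factors $\det(h^{E_i}_{jk})^{1/2}$ that the paper carries through its computation with general positive bases.
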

\begin{proof}
	Let $\{e_{1},...,e_{n}\}$ be a positive basis for $E_{1}$ and 
	$\{f_{1},...,f_{m}\}$ a positive basis for $E_{2}\,,$ the corresponding 
	dual basis being respectively $\{e_{1}^{*},...,e_{n}^{*}\}$ 
	and $\{f_{1}^{*},...,f_{m}^{*}\}\,.$ We introduce also 
	$h_{ij}^{E_{1}}:=h^{E_{1}}(e_{i},e_{j})$ for $i,j\in \{1,...,n\}$ 
	and $h_{ij}^{E_{2}}:=h^{E_{2}}(f_{i},f_{j})$ for $i,j\in \{1,...,m\}\,.$\\
	From the definition of the volume form induced by a metric, we have
	$$
		\mu^{E}=\big(\text{det}\,(h_{ij}^{E_{1}})\big)^{\frac{1}{2}}\,
		\big(\text{det}\,(h_{ij}^{E_{2}})\big)^{\frac{1}{2}}\,e_{1}^{*}
		\wedge\cdots\wedge e_{n}^{*}\wedge f_{1}^{*}\wedge\cdots\wedge f_{m}^{*}\,.
	$$
	On the other hand,
	\begin{eqnarray*}
		\mu^{E_{1}}=\text{det}\,(h_{ij}^{E_{1}})^{\frac{1}{2}}\,e_{1}^{*}\big
		\vert_{E_{1}}\wedge\cdots\wedge e_{n}^{*}\big\vert_{E_{1}}\,\,
		\Rightarrow\,\, p_{1}^{*}\,\mu^{E_{1}}=
		\text{det}\,(h_{ij}^{E_{1}})^{\frac{1}{2}}\,e_{1}^{*}\wedge
		\cdots\wedge e_{n}^{*}\\
	\end{eqnarray*}
	and similarly, $p_{2}^{*}\,\mu^{E_{2}}=
	\text{det}\,(h_{ij}^{E_{2}})^{\frac{1}{2}}\,f_{1}^{*}\wedge\cdots\wedge f_{n}^{*}\,.$ 
	Hence,
	\begin{eqnarray*}
		p_{1}^{*}\,\mu^{E_{1}}\wedge p_{2}^{*}\,\mu^{E_{2}}
		=\big(\text{det}\,(h_{ij}^{E_{1}})\big)^{\frac{1}{2}}\,
		\big(\text{det}\,(h_{ij}^{E_{2}})\big)^{\frac{1}{2}}\,e_{1}^{*}
		\wedge\cdots\wedge e_{n}^{*}\wedge f_{1}^{*}\wedge\cdots\wedge 
		f_{m}^{*}=\mu^{E}\,.
	\end{eqnarray*}
	This proves the lemma.
\end{proof}
	Let us apply Lemma \ref{metrique volume} to $\mu^{P}\,.$ 
	For $x\in P\,,$ we write\,:
	\begin{description}
		\item[$\bullet$] $E_{1}:=(T_{x}\mathcal{O}_{x})^{\perp}\,;\,\,\,\,
			E_{2}:=T_{x}\mathcal{O}_{x}\,,$
		\item[$\bullet$] $h^{E_{1}}_{x}(\xi_{1},\,\xi_{2}):=
			(\pi^{*}h^{M})_{x}(\xi_{1},\,\xi_{2})$ 
			for $\xi_{1},\xi_{2}\in E_{1}\,,$
		\item[$\bullet$]  $h^{E_{2}}_{x}(\xi_{1},\,\xi_{2}):=
			h^{\mathfrak{g}}_{x}(\theta_{x}(\xi_{1}),\,
			\theta_{x}(\xi_{2}))$ for $\xi_{1},\xi_{2}\in E_{2}\,.$
	\end{description}
	For $i\in\{1,2\}\,,$ $h^{E_{i}}$ is a metric on $E_{i}$ and we 
	have $h_{x}^{P}=p_{1}^{*}h^{E_{1}}+p_{2}^{*}h^{E_{}}$ 
	where $p_{i}\,:\,E_{1}\oplus E_{2}\rightarrow E_{i}$ is the canonical 
	projection. Since we assume the manifold $B$ oriented, the space 
	$E_{1}$ is also oriented by the isomorphism 
	$\pi_{*_{x}}\big\vert_{E_{1}}\rightarrow T_{\pi(x)}B\,.$ 
	We fix on $E_{2}$ the orientation given by Lemma \ref{metrique volume}. We then have :
	\begin{eqnarray}\label{eq forme volume en x}
		\mu^{P}_{x}=p_{1}^{*}\,\mu^{E_{1}}\wedge p_{2}^{*}\,\mu^{E_{2}}\,.
	\end{eqnarray}
\begin{remarque}\label{orientation}
	Orientations on $P$ and $B$ induce an orientation on $G$ in the 
	following way : for $x\in P\,,$ the spaces  $T_{x}\mathcal{O}_{x}$ 
	and $\mathfrak{g}$ are isomorphic via the map 
	$\theta_{x}\big\vert_{T_{x}\mathcal{O}_{x}}\,:\,
	T_{x}\mathcal{O}_{x}\rightarrow\mathfrak{g}\,.$ But, the 
	space $T_{x}\mathcal{O}_{x}$ being oriented (see above), the Lie 
	algebra $\mathfrak{g}$ is also oriented and induces an orientation 
	on $G\,.$ This orientation doesn't depend on the point $x\in P\,.$ 
	In fact, if $\mu^{\mathfrak{g}}_{x}$ is the volume form on 
	$\mathfrak{g}$ induced by the metric $h^{\mathfrak{g}}_{x}\,,$ 
	it is obvious that $\mu^{\mathfrak{g}}_{x}$ depends continuously 
	of the point $x\in P\,,$ and the orientation induced by 
	$\mu^{\mathfrak{g}}_{x}$ cannot be reversed.
\end{remarque}
\begin{lemma}\label{lemme p1} 
	With the above notations, we have
	\begin{eqnarray}
		p_{1}^{*}\,\mu^{E_{1}}=(\pi^{*}\mu^{B})_{x}\,.
	\end{eqnarray}
\end{lemma}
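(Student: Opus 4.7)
The plan is to reduce the identity to the statement that $\pi_{*_{x}}$ restricted to $E_{1}$ is an orientation-preserving linear isometry from $(E_{1},h^{E_{1}}_{x})$ onto $(T_{\pi(x)}B,h^{B}_{\pi(x)})$, and then unwind both pullbacks at the point $x$.

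First, I would recall from Remark \ref{remarque metrique} that $\pi_{*_{x}}\vert_{E_{1}}\,:\,E_{1}\rightarrow T_{\pi(x)}B$ is a linear isomorphism. By the very definition of $h^{E_{1}}$, namely $h^{E_{1}}_{x}(\xi_{1},\xi_{2})=(\pi^{*}h^{B})_{x}(\xi_{1},\xi_{2})=h^{B}_{\pi(x)}(\pi_{*_{x}}\xi_{1},\pi_{*_{x}}\xi_{2})$ for $\xi_{1},\xi_{2}\in E_{1}$, this linear isomorphism is in fact an isometry. Moreover, the orientation that we have fixed on $E_{1}$ is by construction precisely the one pulled back from $T_{\pi(x)}B$ via $\pi_{*_{x}}\vert_{E_{1}}$, so the isometry is orientation-preserving. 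Since orientation-preserving linear isometries between oriented Euclidean spaces intertwine the induced volume forms, we obtain
\begin{equation*}
	\big(\pi_{*_{x}}\vert_{E_{1}}\big)^{*}\,\mu^{B}_{\pi(x)}=\mu^{E_{1}}.
\end{equation*}

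Next, I would evaluate both sides of the claimed identity on an arbitrary $n$-tuple $(v_{1},\ldots,v_{n})\in (T_{x}P)^{n}$, where $n=\dim B$. Writing $v_{i}=v_{i}^{h}+v_{i}^{v}$ with $v_{i}^{h}\in E_{1}$ and $v_{i}^{v}\in E_{2}=T_{x}\mathcal{O}_{x}\subseteq\ker(\pi_{*_{x}})$, one computes
\begin{equation*}
	(\pi^{*}\mu^{B})_{x}(v_{1},\ldots,v_{n})=\mu^{B}_{\pi(x)}(\pi_{*_{x}}v_{1},\ldots,\pi_{*_{x}}v_{n})=\mu^{B}_{\pi(x)}(\pi_{*_{x}}v_{1}^{h},\ldots,\pi_{*_{x}}v_{n}^{h}),
\end{equation*}
since $\pi_{*_{x}}$ annihilates the vertical components. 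On the other hand, by definition of $p_{1}$,
\begin{equation*}
	(p_{1}^{*}\mu^{E_{1}})(v_{1},\ldots,v_{n})=\mu^{E_{1}}(v_{1}^{h},\ldots,v_{n}^{h}).
\end{equation*}

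The two right-hand sides are equal by the isometry relation established in the first step, which proves the lemma. The only potential pitfall is a sign issue coming from the orientation conventions; but since Lemma \ref{metrique volume} and the discussion just before \eqref{eq forme volume en x} explicitly fix the orientation of $E_{1}$ through $\pi_{*_{x}}\vert_{E_{1}}$, no sign ambiguity arises. Thus there is no real obstacle here beyond careful bookkeeping of the identifications.
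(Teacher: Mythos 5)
Your proof is correct and rests on the same two observations as the paper's: that $\pi_{*_{x}}\big\vert_{E_{1}}$ is an orientation-preserving isometry onto $(T_{\pi(x)}B,h^{B}_{\pi(x)})$ by the very definitions of $h^{E_{1}}$ and of the orientation on $E_{1}$, and that both $p_{1}$ and $\pi_{*_{x}}$ annihilate the vertical summand $E_{2}$. The paper verifies the first point by an explicit determinant computation in a coordinate-induced positive basis, whereas you invoke the general fact about isometries intertwining metric volume forms; this is only a cosmetic difference.
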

\begin{proof} 
	Let $(U,\varphi)$ be a positive chart of $B$ containing 
	$\pi(x)$ with local coordinates $\{x_{1},...,x_{n}\}\,.$ 
	This gives a positive basis for $E_{1}\,:$
	$$
		\bigg\{\Big(\pi_{*_{x}}\big\vert_{E_{1}}\Big)^{-1}\,
		\dfrac{\partial}{\partial x_{i}}
		\bigg\vert_{\pi(x)}\,,\,\,\,i=1,...,n\bigg\}\,.
	$$
	For $i\in\{1,...,n\}$\,, define
	$$
		e_{i}:=\Big(\pi_{*_{x}}\big\vert_{E_{1}}\Big)^{-1}\,
		\dfrac{\partial}{\partial x_{i}}\bigg\vert_{\pi(x)}\,.
	$$
	We have
	$$
		\mu^{E_{1}}=\text{det}\,(h^{E_{1}}_{ij})^{\frac{1}{2}}\,
		e_{1}^{*}\big\vert_{E_{1}}\wedge\cdots\wedge e_{n}^{*}\big\vert_{E_{1}}
	$$
	with
	\begin{eqnarray}
		h^{E_{1}}_{ij}=h^{E_{1}}(e_{i},\,e_{j})=
			h^{B}_{\pi(x)}(\pi_{*_{x}}\,e_{i},\,\pi_{*_{x}}\,e_{j})
			=h^{B}_{\pi(x)}\bigg(\dfrac{\partial}{\partial x_{i}}
			\bigg\vert_{\pi(x)},\,\dfrac{\partial}{\partial x_{j}}
			\bigg\vert_{\pi(x)}\bigg)=(h^{B}_{ij}\circ \pi)(x)\,.
	\end{eqnarray}
	Hence, $p_{1}^{*}\,\mu^{E_{1}}=(\text{det}\,(h^{B}_{ij})^{\frac{1}{2}}
	\circ\pi)(x)\,e_{1}^{*}\wedge\cdots\wedge e_{n}^{*}\,.$\\
	On the other hand,
	\begin{eqnarray}
		(\pi^{*}\mu^{B})_{x}(e_{1},...,e_{n})=
			\mu^{B}_{\pi(x)}(\pi_{*_{x}}\,e_{1},...,\pi_{*_{x}}\,e_{n})
			=\mu^{B}_{\pi(x)}\bigg(\dfrac{\partial}{\partial x_{1}}
			\bigg\vert_{\pi(x)},...,\dfrac{\partial}{\partial x_{n}}\bigg\vert_{\pi(x)}\bigg)
			=(\text{det}\,(h^{B}_{ij})^{\frac{1}{2}}\circ\pi)(x)
	\end{eqnarray}
	which implies that
	$$
		(\pi^{*}\mu^{B})_{x}=(\text{det}\,(h^{B}_{ij})^{\frac{1}{2}}
		\circ\pi)(x)\,e_{1}^{*}\wedge\cdots\wedge e_{n}^{*}=p_{1}^{*}\,\mu^B\,.
	$$
	Thus, $p_{1}^{*}\,\mu^{E_{1}}=(\pi^{*}\mu^{B})_{x}\,.$
\end{proof}	
\begin{lemma}\label{lemme p2}
	With the notations introduced before Lemma \ref{lemme p1}, we have 
	\begin{eqnarray}
		p_{2}^{*}\,\mu^{E_{2}}=\theta^{*}_{x}\,\mu^{\mathfrak{g}}_{x}\,,
	\end{eqnarray}
	where $\mu^{\mathfrak{g}}_{x}$ is the volume form on 
	$\mathfrak{g}$ induced by the metric $h^{\mathfrak{g}}_{x}$ 
	(see Remark \ref{orientation}) and where $\theta^{*}_{x}\,
	\mu^{\mathfrak{g}}_{x}$ is the pullback of 
	$\mu^{\mathfrak{g}}_{x}$ by the linear map 
	$\theta_{x}\,:\,T_{x}P\rightarrow \mathfrak{g}\,.$
\end{lemma}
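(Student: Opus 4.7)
The plan is to observe that $\theta_x\vert_{T_x\mathcal{O}_x}$ is, by construction, a linear isometry between oriented Euclidean spaces of the same dimension, so it pulls back the volume form to the volume form; then everything reduces to a trivial factorisation through $p_2$.

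First, I would unpack the relevant identifications. By the definition of $h^{E_2}$ one has, for $\xi_1,\xi_2\in E_2=T_x\mathcal{O}_x$,
\begin{equation*}
h^{E_2}_x(\xi_1,\xi_2)=h^{\mathfrak{g}}_x\bigl(\theta_x(\xi_1),\theta_x(\xi_2)\bigr),
\end{equation*}
so that $\theta_x\vert_{E_2}\colon E_2\to\mathfrak{g}$ is a linear isometry (it is bijective because $\theta_x=(\vartheta_x^{-1})_{*_x}$ on vertical vectors, and $\vartheta_x\colon G\to\mathcal{O}_x$ is a diffeomorphism). Moreover, Remark \ref{orientation} fixes the orientation of $\mathfrak{g}$ precisely so that $\theta_x\vert_{E_2}$ is orientation-preserving with respect to the chosen orientation of $E_2$ from Lemma \ref{metrique volume}. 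Consequently
\begin{equation*}
(\theta_x\vert_{E_2})^{*}\,\mu^{\mathfrak{g}}_x=\mu^{E_2},
\end{equation*}
since any orientation-preserving linear isometry between oriented Euclidean spaces of equal dimension pulls back the canonical volume form to the canonical volume form.

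Next I would factorise $\theta_x$ through $p_2$. Because $\theta$ annihilates horizontal vectors and $p_1+p_2=\mathrm{id}_{T_xP}$ with $E_1=(T_x\mathcal{O}_x)^{\perp}\subseteq\ker\theta_x$, one has, for every $X\in T_xP$,
\begin{equation*}
\theta_x(X)=\theta_x(X^{v})=\theta_x\bigl(p_2(X)\bigr)=\bigl(\theta_x\vert_{E_2}\circ p_2\bigr)(X).
\end{equation*}
Pulling back $\mu^{\mathfrak{g}}_x$ along this factorisation and using the previous step then gives
\begin{equation*}
\theta_x^{*}\,\mu^{\mathfrak{g}}_x=p_2^{*}\bigl((\theta_x\vert_{E_2})^{*}\mu^{\mathfrak{g}}_x\bigr)=p_2^{*}\,\mu^{E_2},
\end{equation*}
which is exactly the asserted identity.

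There is no real obstacle here; the only subtle point is keeping the orientation conventions from Lemma \ref{metrique volume} and Remark \ref{orientation} consistent, which is automatic because both are designed so that $\theta_x\vert_{T_x\mathcal{O}_x}$ is an orientation-preserving isometry. After that, the computation is a one-line pullback through the factorisation $\theta_x=\theta_x\vert_{E_2}\circ p_2$.
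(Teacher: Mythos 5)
Your proof is correct and follows essentially the same route as the paper: both rest on the observation that $\theta_{x}\vert_{E_{2}}=(\vartheta_{x}^{-1})_{*_{x}}$ is an orientation-preserving isometry from $(E_{2},h^{E_{2}})$ onto $(\mathfrak{g},h^{\mathfrak{g}}_{x})$, hence identifies the volume forms, followed by the factorisation $\theta_{x}=\theta_{x}\vert_{E_{2}}\circ p_{2}$. The only difference is presentational: the paper verifies the isometry-pulls-back-volume-form step by an explicit computation in a positive basis (computing $h^{E_{2}}_{ij}=(h^{\mathfrak{g}}_{x})_{ij}$ and the dual covectors), whereas you invoke it as a standard fact, which is perfectly legitimate.
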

\begin{proof} 
	Let $\{\xi_{1},...,\xi_{m}\}$ be a positive basis for 
	$\mathfrak{g}$ (see Remark \ref{orientation} for the question of 
	the orientation of $\mathfrak{g}$). The family 
	$\{(\vartheta_{x})_{*_{e}}\,\xi_{1},...,(\vartheta_{x})_{*_{e}}\,\xi_{m}\}$ 
	is a positive basis for $E_{2}$ and we have the formula\,:
	\begin{eqnarray}\label{j'ai pas d'inspiration}
		\mu^{E_{2}}=\text{det}\,(h_{ij}^{E_{2}})^{\frac{1}{2}}\,
		\Big((\vartheta_{x})_{*_{e}}\,\xi_{1}\Big)^{\flat}
		\wedge\cdots\wedge\Big((\vartheta_{x})_{*_{e}}\,\xi_{1}\Big)^{\flat}\,
	\end{eqnarray}
	where $``\,\overset{\flat}{\text{}}\,"\,:\,E_{2}
	\rightarrow E_{2}^{*}$ denotes the ``dualisation'' operator 
	with respect to the metric $h_{x}^{E_{2}}\,.$
	But,
	\begin{eqnarray}
		h^{E_{2}}_{ij}=h^{E_{2}}\Big((\vartheta_{x})_{*_{e}}\,\xi_{i},\,
			(\vartheta_{x})_{*_{e}}\,\xi_{j}\Big)
		=h^{\mathfrak{g}}_{x}\Big(\theta_{x}\big((\vartheta_{x})_{*_{e}}\,
			\xi_{i}\big),\,\theta_{x}\big((\vartheta_{x})_{*_{e}}\,
			\xi_{j}\big)\Big)
		=h_{x}^{\mathfrak{g}}(\xi_{i},\xi_{j})=
			(h^{\mathfrak{g}}_{x})_{ij}\label{sgdsgf}
	\end{eqnarray}
	and one can check, for $u\in E_{2}\,,$ that
	\begin{eqnarray}\label{vkjhlksjh}
		\Big((\vartheta_{x})_{*_{e}}\,\xi_{j}\Big)^{\flat}u=
		\xi_{j}^{\flat}\big(\theta_{x}(u)\big)
		\Rightarrow \Big((\vartheta_{x})_{*_{e}}\,\xi_{j}\Big)^{\flat}
		=\Big((\vartheta^{-1}_{x})_{*_{e}}\Big)^{*}\,\xi_{j}^{\flat}\,.
	\end{eqnarray}
	From \eqref{sgdsgf} and \eqref{vkjhlksjh} applied to 
	\eqref{j'ai pas d'inspiration}, it follows that :
	\begin{eqnarray*}
		\mu^{E_{2}}&=&\text{det}\,\Big((h_{x}^{\mathfrak{g}})_{ij}
			\Big)^{\frac{1}{2}}\,\bigg(\Big((\vartheta^{-1}_{x})_{*_{e}}
			\Big)^{*}\,\xi_{1}^{\flat}\bigg)\wedge\cdots\wedge\bigg
			(\Big((\vartheta^{-1}_{x})_{*_{x}}\Big)^{*}\,\xi_{m}^{\flat}\bigg)\\
		&=&\text{det}\,\Big((h_{x}^{\mathfrak{g}})_{ij}\Big)^
			{\frac{1}{2}}\,\Big((\vartheta^{-1}_{x})_{*_{e}}\Big)^{*}
			\,(\xi_{1}^{\flat}\wedge\cdots\wedge\xi_{m}^{\flat})\\
		&=&\Big((\vartheta^{-1}_{x})_{*_{e}}\Big)^{*}\,\mu_{x}^{\mathfrak{g}}\,.
	\end{eqnarray*}
	Finally,
	\begin{eqnarray*}
		p_{2}^{*}\,\mu^{E_{2}}=p_{2}^{*}\,\Big((\vartheta^{-1}_{x})_{*_{e}}
		\Big)^{*}\,\mu_{x}^{\mathfrak{g}}=\Big((\vartheta^{-1}_{x})_{*_{e}}
		\circ p_{2}\Big)^{*}\,\mu_{x}^{\mathfrak{g}}=\theta_{x}^{*}\,\mu_{x}^{\mathfrak{g}}
	\end{eqnarray*}
	which is the desired formula.
\end{proof}
	From Lemma \ref{lemme p1} and Lemma \ref{lemme p2}, it follows, 
	using formula  \eqref{eq forme volume en x}, that
	\begin{eqnarray}
		\mu^{P}_{x}=(\pi^{*}\mu^{B})_{x}\wedge\theta_{x}^{*}\,\mu_{x}^{\mathfrak{g}}\,.
	\end{eqnarray}
	Let us consider the unique normalized volume form 
	$\nu^{G}$ on $G$ (note that $\nu^{G}$ is bi-invariant since $G$ is compact and connected). 
	For $x\in P\,,$ let $\widetilde{V}(x)$ be the unique real number satisfying 
	$\widetilde{V}(x)\cdot\nu^{G}_{e}=\mu_{x}^{\mathfrak{g}}\,.$ 
	The $G$-invariance of $\mu^{P}$ implies the existence of a 
	function $V\in \text{C}^{\infty}(B,\mathbb{R}_{+}^{*})$ 
	such that $\widetilde{V}=V\circ \pi\,.$ To summarize,
\begin{proposition}
	\label{proposition formule integration}
	There exists a function $V\in \text{C}^{\infty}(B,\mathbb{R}_{+}^{*})$ 
	such that 
	\begin{eqnarray}\label{equation formule forme volume}
		\mu^{P}=\pi^{*}(V\mu^{B})\wedge\theta^{*}\,\nu_{e}^{G}\,,
	\end{eqnarray}
	where $\theta^{*}\,\nu_{e}^{G}\in \Omega^{m}(P)$ 
	($m=\text{dim}\,(G)$) is defined by
	$$
		(\theta^{*}\,\nu_{e}^{G})_{x}(X_{1},...,X_{m})=
		\nu_{e}^{G}\Big(\theta_{x}(X_{1}),...,\theta_{x}(X_{m})\Big)\,,
	$$
	for any $x\in P$ and $X_{1},...,X_{m}\in T_{x}P\,.$
\end{proposition}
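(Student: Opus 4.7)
The plan is to repackage the pointwise identity $\mu^P_x = (\pi^*\mu^B)_x\wedge\theta_x^*\,\mu_x^{\mathfrak{g}}$ already established by combining Lemmas \ref{lemme p1} and \ref{lemme p2} with formula \eqref{eq forme volume en x}. All that remains is to replace the $x$-dependent top form $\mu_x^{\mathfrak{g}}$ on $\mathfrak{g}$ by the fixed reference form $\nu_e^G$, and to check that the scaling factor descends through $\pi$ to a function on $B$.

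First I would define $\widetilde{V}:P\to\mathbb{R}$ by the equation $\mu_x^{\mathfrak{g}}=\widetilde{V}(x)\cdot\nu_e^G$. Since, by Remark \ref{orientation}, both sides are positive top forms on $\mathfrak{g}$ with respect to the same orientation, $\widetilde{V}(x)>0$. Smoothness of $\widetilde{V}$ comes from the smooth dependence of the metric $h_x^{\mathfrak{g}}$ on $x$ (this is the same smoothness used in Remark \ref{orientation}). Substituting into the pointwise identity gives
$$\mu^P_x=(\pi^*\mu^B)_x\wedge\theta_x^*\big(\widetilde{V}(x)\,\nu_e^G\big)=\widetilde{V}(x)\,(\pi^*\mu^B)_x\wedge(\theta^*\nu_e^G)_x,$$
so the only remaining content is to show $\widetilde{V}=V\circ\pi$ for some $V\in C^\infty(B,\mathbb{R}_+^*)$.

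This is the main step, and I would establish it by showing $\widetilde{V}$ is $G$-invariant on $P$ (so that it descends along the surjective submersion $\pi$). The key inputs are Lemma \ref{metrique sur P}(iii) and the bi-invariance of $\nu^G$. Indeed, Lemma \ref{metrique sur P}(iii) says precisely that $\operatorname{Ad}(g):(\mathfrak{g},h^{\mathfrak{g}}_{\vartheta_g(x)})\to(\mathfrak{g},h_x^{\mathfrak{g}})$ is a linear isometry; because $G$ is compact and connected, $\det\operatorname{Ad}(g)=+1$, so $\operatorname{Ad}(g)$ is orientation preserving as well and hence $\operatorname{Ad}(g)^*\mu^{\mathfrak{g}}_{\vartheta_g(x)}=\mu_x^{\mathfrak{g}}$. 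Bi-invariance of $\nu^G$ at the Lie algebra level reads $\operatorname{Ad}(g)^*\nu_e^G=\nu_e^G$. Combining,
$$\widetilde{V}(x)\,\nu_e^G=\mu_x^{\mathfrak{g}}=\operatorname{Ad}(g)^*\mu^{\mathfrak{g}}_{\vartheta_g(x)}=\widetilde{V}(\vartheta_g(x))\cdot\operatorname{Ad}(g)^*\nu_e^G=\widetilde{V}(\vartheta_g(x))\,\nu_e^G,$$
so $\widetilde{V}\circ\vartheta_g=\widetilde{V}$ for every $g\in G$. Thus $\widetilde{V}$ factors as $V\circ\pi$ with $V\in C^\infty(B,\mathbb{R}_+^*)$, and the formula $\mu^P=\pi^*(V\mu^B)\wedge\theta^*\nu_e^G$ follows.

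The principal obstacle is the $G$-invariance of $\widetilde{V}$: one must correctly interpret the transformation law in Lemma \ref{metrique sur P}(iii) as an isometry statement for $\operatorname{Ad}(g)$, and use the (easily overlooked) unimodularity of compact connected $G$ to conclude that the induced action on top forms is trivial. An alternative route, which I would keep in reserve as a cross-check, is to pull back $\mu^P=\widetilde{V}\cdot\pi^*\mu^B\wedge\theta^*\nu_e^G$ by $\vartheta_g$ directly, using the $G$-invariance of $\mu^P$, the identity $\pi\circ\vartheta_g=\pi$, and the transformation rule \eqref{invariance connection} for $\theta$; the same cancellation then forces $\widetilde{V}\circ\vartheta_g=\widetilde{V}$.
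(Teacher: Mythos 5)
Your proposal is correct and follows essentially the same path as the paper: combine Lemmas \ref{lemme p1} and \ref{lemme p2} with \eqref{eq forme volume en x} to get $\mu^P_x=(\pi^*\mu^B)_x\wedge\theta_x^*\mu_x^{\mathfrak{g}}$, define $\widetilde{V}$ by $\mu_x^{\mathfrak{g}}=\widetilde{V}(x)\,\nu^G_e$, and show $\widetilde{V}$ descends to $B$. The paper justifies the descent by appealing to the $G$-invariance of $\mu^P$ (exactly your reserve argument), whereas your primary route via Lemma \ref{metrique sur P}(iii) and $\det\operatorname{Ad}(g)=1$ is an equally valid, slightly more explicit way of establishing the same invariance.
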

	In order to give a geometrical interpretation to the function 
	$V\,,$ let us make the following remark.
\begin{remarque}
	For $x\in P\,,$ the orbit $\mathcal{O}_{x}$ of $P$ through the 
	point $x$ is canonically oriented via the orbit map 
	$\vartheta_{x}\,:\,G\overset{\cong}{\rightarrow} \mathcal{O}_{x}\,.$ 
	This orientation on $\mathcal{O}_{x}$ doesn't depend of the orbit map 
	which is used because, for $g\in G\,,$ the connectedness of $G$ 
	implies that the map 
	$\vartheta_{\vartheta_{g}(x)}\,:\,G\overset{\cong}{\rightarrow} 
	\mathcal{O}_{x}$ induces the same orientation. Thus, we can consider 
	without ambiguities the volume form 
	$\mu^{\mathcal{O}_{x}}$ of $\mathcal{O}_{x}$ induced by the 
	restriction of the metric $h^{P}$ on $\mathcal{O}_{x}\,.$
\end{remarque}
\begin{lemma}
	For $x\in P\,,$ we have the formula :
	\begin{eqnarray}
		\mu^{\mathcal{O}_{x}}=(V\circ\pi)(x)\cdot(\vartheta^{-1}_{x})^{*}\,\nu^{G}\,.
	\end{eqnarray}
	In particular, $V\big(\pi(x)\big)=\text{Vol}\,(\mathcal{O}_{x})\,.$
\end{lemma}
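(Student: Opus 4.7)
The plan is to compute $\mu^{\mathcal{O}_x}$ pointwise at $y\in\mathcal{O}_x$ by restricting the metric $h^{P}$ to the vertical subspace $T_y\mathcal{O}_x = T_y\mathcal{O}_y$, then to recognize the resulting expression, up to the scalar $V\circ\pi$, as the pullback of $\nu^G$ by $\vartheta_x^{-1}$.

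First I would use point $(i)$ of Lemma \ref{metrique sur P}: for vertical vectors $X_y, Z_y\in T_y\mathcal{O}_y$ one has $\pi_{*_y}X_y = \pi_{*_y}Z_y = 0$, so $h^{P}_y(X_y,Z_y) = h^{\mathfrak{g}}_y(\theta_y(X_y),\theta_y(Z_y))$. Moreover, $\theta_y$ restricted to $T_y\mathcal{O}_y$ coincides with $(\vartheta_y^{-1})_{*_y}$, which is a linear isomorphism onto $\mathfrak{g}$. Arguing exactly as in the proof of Lemma \ref{lemme p2}, the volume form on $T_y\mathcal{O}_y$ induced by this restricted metric is the pullback of $\mu^{\mathfrak{g}}_y$ by $(\vartheta_y^{-1})_{*_y}$; orientations match by construction (Remark \ref{orientation}). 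This yields
\begin{eqnarray*}
\mu^{\mathcal{O}_x}_y = \bigl((\vartheta_y^{-1})_{*_y}\bigr)^{\!*}\mu^{\mathfrak{g}}_y = (\vartheta_y^{-1})^{*}\mu^{\mathfrak{g}}_y\big\vert_y\,.
\end{eqnarray*}

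Next I would invoke the defining relation $\mu^{\mathfrak{g}}_y = \widetilde{V}(y)\,\nu^{G}_e = V(\pi(y))\,\nu^{G}_e$, using $\pi(y)=\pi(x)$ since $y\in\mathcal{O}_x$, to rewrite this as $\mu^{\mathcal{O}_x}_y = V(\pi(x))\cdot (\vartheta_y^{-1})^{*}\nu^{G}\big\vert_y$. The last step is to replace $\vartheta_y^{-1}$ by $\vartheta_x^{-1}$: writing $y=\vartheta_{g_0}(x)$, a direct computation with the right action gives $\vartheta_y = \vartheta_x\circ L_{g_0}$, hence $\vartheta_y^{-1} = L_{g_0^{-1}}\circ \vartheta_x^{-1}$. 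Since $\nu^{G}$ is bi-invariant (here left-invariance suffices), $L_{g_0^{-1}}^{*}\nu^{G} = \nu^{G}$, and consequently $(\vartheta_y^{-1})^{*}\nu^{G} = (\vartheta_x^{-1})^{*}\nu^{G}$ at every point of $\mathcal{O}_x$. Assembling these identities yields the claimed formula $\mu^{\mathcal{O}_x} = (V\circ\pi)(x)\cdot(\vartheta_x^{-1})^{*}\nu^{G}$.

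The final assertion follows by integration: since $\vartheta_x^{-1}\colon\mathcal{O}_x\to G$ is an orientation-preserving diffeomorphism (by the very definition of the orientation on $\mathcal{O}_x$) and $\nu^{G}$ is normalized,
\begin{eqnarray*}
\text{Vol}(\mathcal{O}_x) = \int_{\mathcal{O}_x}\mu^{\mathcal{O}_x} = V(\pi(x))\int_{\mathcal{O}_x}(\vartheta_x^{-1})^{*}\nu^{G} = V(\pi(x))\int_G\nu^{G} = V(\pi(x))\,.
\end{eqnarray*}

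The only step with a mild subtlety is the orientation bookkeeping: one must verify that the orientation on $T_y\mathcal{O}_y$ induced by the decomposition $T_yP=(T_y\mathcal{O}_y)^{\perp}\oplus T_y\mathcal{O}_y$ (as fixed right before equation \eqref{eq forme volume en x}) is indeed compatible with the orientation transported from $\mathfrak{g}$ via $\vartheta_y$; this was already settled in Remark \ref{orientation}, so no new argument is required beyond invoking it.
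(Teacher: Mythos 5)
Your proof is correct and rests on the same essential ingredients as the paper's: the fact that $\theta_y\big\vert_{T_y\mathcal{O}_y}=(\vartheta_y^{-1})_{*_y}$ is an orientation-preserving isometry onto $(\mathfrak{g},h^{\mathfrak{g}}_y)$, the defining relation $\mu^{\mathfrak{g}}_y=V(\pi(y))\cdot\nu^{G}_{e}$, and the identity $\vartheta_{\vartheta_{g_{0}}(x)}=\vartheta_{x}\circ L_{g_{0}}$ combined with the invariance of $\nu^{G}$. The only difference is organizational: the paper defines the ratio $f$ by $(V\circ\pi)(x)\cdot(\vartheta^{-1}_{x})^{*}\nu^{G}=f\cdot\mu^{\mathcal{O}_{x}}$, uses $G$-invariance of both forms to show $f$ is constant on the orbit, and then evaluates $f(x)=1$ at the single point $x$ via an orthonormal basis, whereas you carry out the pointwise computation at every $y\in\mathcal{O}_{x}$ and transport $\vartheta_{y}^{-1}$ back to $\vartheta_{x}^{-1}$ by left-invariance of $\nu^{G}$ --- the two arguments use the same identity in mirror form, and both obtain the volume statement by integrating against the normalized $\nu^{G}$.
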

\begin{proof}
	Let $f\in C^{\infty}(\mathcal{O}_{x},\mathbb{R})$ be the unique map satisfying 
	\begin{eqnarray}\label{equation par hypothese}
		(V\circ\pi)(x)\cdot(\vartheta^{-1}_{x})^{*}\,\nu^{G}=f\cdot\mu^{\mathcal{O}_{x}}\,.
	\end{eqnarray}
	Let $g\in G$ be arbitrary. The forms $\nu^{G}$ and 
	$\mu^{\mathcal{O}_{x}}$ being $G$-invariant, we have :
	\begin{eqnarray}
		(V\circ\pi)(x)\cdot(\vartheta^{-1}_{x})^{*}\,\nu^{G}=
			f\cdot\mu^{\mathcal{O}_{x}}
		&\Rightarrow& \vartheta_{g}^{*}\Big((V\circ\pi)(x)\cdot
			(\vartheta^{-1}_{x})^{*}\,\nu^{G}\Big)=
			\vartheta_{g}^{*}\Big(f\cdot\mu^{\mathcal{O}_{x}}\Big)\nonumber\\
		&\Rightarrow&(V\circ\pi)(x)\cdot(\underbrace{\vartheta^{-1}_{x}
			\circ\vartheta_{g}}_{\displaystyle=L_{g}\circ\vartheta_{x}^{-1}})^{*}\,
			\nu^{G}=f\circ\vartheta_{g}\cdot\mu^{\mathcal{O}_{x}}\nonumber\\
		&\Rightarrow&(V\circ\pi)(x)\cdot(\vartheta^{-1}_{x})^{*}\,\nu^{G}=
		f\circ\vartheta_{g}\cdot\mu^{\mathcal{O}_{x}}\label{equation apres symetrie}\,.
	\end{eqnarray}
	From \eqref{equation par hypothese} and \eqref{equation apres symetrie}, 
	it follows that $f\circ\vartheta_{g}=f$ for all $g\in G\,.$ This 
	implies that $f$ is constant on $\mathcal{O}_{x}\,.$\\
	Let us show that $f(x)=1\,.$ Let $\{u_{1},...,u_{m}\}$ be an orthonormal 
	positive basis for $T_{x}\mathcal{O}_{x}$ (we assume that the 
	dimension of $G$ is equal to $m$). Observe that the map 
	$\theta_{x}\big\vert_{T_{x}\mathcal{O}_{x}}=(\vartheta_{x}^{-1})_{*_{x}}\,:\,
	\big(T_{x}\mathcal{O}_{x},\,h^{P}\big\vert_{T_{x}\mathcal{O}_{x}}\big)
	\rightarrow(\mathfrak{g}\,,\,h^{\mathfrak{g}}_{x})$ is :
	\begin{description}
		\item[$\bullet$] an isometry according to (i) in 
			Lemma \ref{metrique sur P}\,,
		\item[$\bullet$] an isomorphism which preserves the orientation 
			according to Remark \ref{orientation}\,.
	\end{description}
	It follows that $\big\{(\vartheta_{x}^{-1})_{*_{x}}\,u_{1},...,
	(\vartheta_{x}^{-1})_{*_{x}}\,u_{m}\big\}$ is an orthonormal 
	basis of $\mathfrak{g}$ and
	\begin{eqnarray*}
		&&\Big(f(x)\cdot\mu^{\mathcal{O}_{x}}\Big)_{x}(u_{1},...,u_{m})
			=\Big((V\circ\pi)(x)\cdot(\vartheta^{-1}_{x})^{*}\,
			\nu^{G}\Big)_{x}(u_{1},...,u_{m})\\
		&\Rightarrow& f(x)=(V\circ\pi)(x)\cdot\nu^{G}_{e}\,
			\Big((\vartheta_{x}^{-1})_{*_{x}}\,u_{1},...,(\vartheta_{x}^{-1})_{*_{x}}\,u_{m}\Big)\\
		&\Rightarrow&f(x)=\mu^{\mathfrak{g}}_{x}\,
			\Big((\vartheta_{x}^{-1})_{*_{x}}\,u_{1},...,(\vartheta_{x}^{-1})_{*_{x}}\,u_{m}\Big)\\
		&\Rightarrow& f(x)=1\,.
	\end{eqnarray*}
	The lemma follows.
\end{proof}
	Before the end of this section, let us give an integration formula.
\begin{proposition}\label{proposition vraiment formule integration}
	For $f\in C^{\infty}(B,\mathbb{R})\,,$ we have the following formula :
	\begin{eqnarray}
		\int_{P}\,(f\circ\pi)\cdot\mu^{P}=\int_{B}\, f\cdot V\mu^{B}\,.
	\end{eqnarray}
\end{proposition}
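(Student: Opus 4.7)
The plan is to reduce the integral on $P$ to an iterated integral using the decomposition $\mu^{P}=\pi^{*}(V\mu^{B})\wedge\theta^{*}\,\nu_{e}^{G}$ from Proposition \ref{proposition formule integration}, and then use Fubini along the fibers of $\pi$. The crucial observation is that $(f\circ\pi)\cdot\mu^{P}=\pi^{*}(fV\mu^{B})\wedge\theta^{*}\nu_{e}^{G}$, so the function $fV$ behaves as a purely horizontal factor while $\theta^{*}\nu_{e}^{G}$ carries the fiber direction. Once we know the fiber integral of $\theta^{*}\nu_{e}^{G}$ equals $1$, the formula follows immediately.

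\textbf{Step 1.} Cover $B$ by finitely many open sets $\{U_{i}\}_{i=1}^{N}$ over which $P$ admits local trivializations $\Phi_{i}\,:\,\pi^{-1}(U_{i})\overset{\cong}{\rightarrow}U_{i}\times G$ that are compatible with the orientation (this is possible because $P$ and $B$ are compact). Choose a smooth partition of unity $\{\rho_{i}\}$ on $B$ subordinate to $\{U_{i}\}\,.$ Then $\{\rho_{i}\circ\pi\}$ is a smooth $G$-invariant partition of unity on $P$ subordinate to $\{\pi^{-1}(U_{i})\}\,,$ and
\begin{eqnarray*}
\int_{P}(f\circ\pi)\cdot\mu^{P}=\sum_{i=1}^{N}\int_{\pi^{-1}(U_{i})}(\rho_{i}\circ\pi)\cdot(f\circ\pi)\cdot\mu^{P}\,.
\end{eqnarray*}

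\textbf{Step 2.} Work inside a single trivialization $\pi^{-1}(U)\cong U\times G$ with coordinates $(b,g)$. Using Proposition \ref{proposition formule integration} and the identity $(f\circ\pi)\cdot\pi^{*}(V\mu^{B})=\pi^{*}(fV\mu^{B})\,,$ we get
\begin{eqnarray*}
(\rho\circ\pi)(f\circ\pi)\cdot\mu^{P}=\pi^{*}(\rho fV\mu^{B})\wedge\theta^{*}\nu^{G}_{e}\,.
\end{eqnarray*}
By Fubini (the classical Fubini theorem applied on $U\times G$ via $\Phi$), this double form integrates to $\int_{U}\rho fV\mu^{B}$ multiplied by the fiber integral $\int_{\{b\}\times G}\theta^{*}\nu^{G}_{e}\,.$ So the whole matter reduces to showing that the fiber integral equals $1\,.$

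\textbf{Step 3.} The main (and only non-trivial) point is the fiber integration. For fixed $b\in U$ and $x\in\pi^{-1}(b)\,,$ the restriction of $\theta^{*}\nu^{G}_{e}$ to the orbit $\mathcal{O}_{x}$ equals $(\vartheta_{x}^{-1})^{*}\nu^{G}$, because $\theta_{x}$ restricted to $T_{x}\mathcal{O}_{x}$ is precisely $(\vartheta_{x}^{-1})_{*_{x}}$ (by the very definition of $\theta$), so the two forms agree on tangent vectors along $\mathcal{O}_{x}\,.$ Since $\vartheta_{x}^{-1}\,:\,\mathcal{O}_{x}\overset{\cong}{\rightarrow}G$ is an orientation-preserving diffeomorphism (cf.\ Remark \ref{orientation}) and $\nu^{G}$ is normalized, we obtain
\begin{eqnarray*}
\int_{\mathcal{O}_{x}}\theta^{*}\nu^{G}_{e}=\int_{\mathcal{O}_{x}}(\vartheta_{x}^{-1})^{*}\nu^{G}=\int_{G}\nu^{G}=1\,.
\end{eqnarray*}

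\textbf{Step 4.} Combining Steps 2 and 3 yields $\int_{\pi^{-1}(U_{i})}(\rho_{i}\circ\pi)(f\circ\pi)\mu^{P}=\int_{U_{i}}\rho_{i}fV\mu^{B}\,,$ and summing over $i$ gives $\int_{P}(f\circ\pi)\mu^{P}=\int_{B}fV\mu^{B}\,,$ as desired. The only place where care is really needed is in Step 3 — one must be sure that the orientation conventions fixed in Lemma \ref{metrique volume} and Remark \ref{orientation} are consistent with the orientation used on the fibers of the trivialization, so that no spurious sign appears in the Fubini step; this consistency is exactly what was built into the construction of $\nu^{G}$ and $V$.
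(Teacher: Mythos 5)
Your proof is correct and follows essentially the same route as the paper's: a partition of unity on $B$, local trivializations, and a Fubini argument whose entire content is that $\theta^{*}\nu_{e}^{G}$ integrates to $1$ over each fibre. The only real difference is organizational: where you invoke the abstract fiber-integration (projection) formula and compute the restriction of $\theta^{*}\nu_{e}^{G}$ to the orbits, the paper instead proves the explicit pullback identity $(\Psi^{-1})^{*}\mu^{P}=(V\circ pr_{1})\cdot(pr_{1}^{*}\mu^{B})\wedge(pr_{2}^{*}\nu^{G})$ (Lemma \ref{lemme la cle}), using Lemma \ref{lemme pas assez de place} to discard the horizontal cross-terms of $(\Psi^{-1})^{*}\theta^{*}\nu_{e}^{G}$ --- which is the same computation you perform in Step 3. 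One small precision: the identity $(\theta^{*}\nu_{e}^{G})\big\vert_{\mathcal{O}_{x}}=(\vartheta_{x}^{-1})^{*}\nu^{G}$ at a point $y=\vartheta_{g}(x)$ with $g\neq e$ requires the left-invariance of $\nu^{G}$ together with $\vartheta_{y}^{-1}=L_{g^{-1}}\circ\vartheta_{x}^{-1}$, not merely the definition of $\theta$ at the point $x$ itself (this is exactly \eqref{equation translation a gauche} in the paper), so there is no gap, but your justification as written only covers the point $x$.
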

	Proposition \ref{proposition vraiment formule integration} can be 
	shown using two lemmas.
\begin{lemma}\label{lemme pas assez de place}
	Let $E_{1},E_{2}$ be two vector spaces of respective 
	dimension $n$ and $m\,,$ $\mu\in (\Lambda^{n}E_{1}^{*}){\setminus}\{0\}$ 
	and $p_{i}\,:\,E:=E_{1}\times E_{2}\rightarrow E_{i}$ 
	the canonical projection associated ($i=1,2$)\,. 
	For $\alpha\in \Lambda^{m}E^{*}\,,$ we have :
	\begin{eqnarray}
		p_{1}^{*}\,\mu\wedge\alpha=p_{1}^{*}\,\mu\wedge\widetilde{\alpha}\,,
	\end{eqnarray}
	where $\widetilde{\alpha}\in \Lambda^{m}E^{*}$ is defined, 
	for $(u_{1},v_{1}),...,(u_{m},v_{m})\in E,$ by :
	\begin{eqnarray}\label{equation le secret de l'integrale}
	\widetilde{\alpha}\Big((u_{1},v_{1}),...,(u_{m},v_{m})\Big):=
	\alpha\Big((0,v_{1}),...,(0,v_{m})\Big)\,.
	\end{eqnarray}
\end{lemma}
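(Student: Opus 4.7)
The plan is to reduce everything to a basis computation. Choose any basis $\{e_1,\ldots,e_n\}$ of $E_1$ and $\{f_1,\ldots,f_m\}$ of $E_2\,,$ producing an obvious basis of $E=E_1\times E_2$ whose dual basis $\{\varepsilon_1,\ldots,\varepsilon_n,\varphi_1,\ldots,\varphi_m\}\subset E^*$ satisfies $\varepsilon_i(e_j,0)=\delta_{ij}\,,$ $\varepsilon_i(0,f_k)=0\,,$ and symmetrically for the $\varphi_j$'s. With this choice, $p_1^*\mu$ is a scalar multiple of $\varepsilon_1\wedge\cdots\wedge\varepsilon_n\,,$ since $\mu\in\Lambda^n E_1^*$ is proportional to the top form $(e_1^*|_{E_1})\wedge\cdots\wedge (e_n^*|_{E_1})$ and pulling back by $p_1$ sends $e_i^*|_{E_1}$ to $\varepsilon_i\,.$

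Next, expand $\alpha$ in the standard basis of $\Lambda^m E^*\,,$
\begin{eqnarray*}
\alpha=\sum_{|I|+|J|=m}a_{IJ}\,\varepsilon_I\wedge\varphi_J\,,
\end{eqnarray*}
where $I\subseteq\{1,\ldots,n\}\,,$ $J\subseteq\{1,\ldots,m\}$ and $\varepsilon_I\,,\,\varphi_J$ are the wedge products indexed by $I$ and $J$ in increasing order. Wedging with $p_1^*\mu=c\cdot\varepsilon_1\wedge\cdots\wedge\varepsilon_n$ annihilates every term for which $I\neq\emptyset\,,$ since a repeated $\varepsilon_i$ would appear, so
\begin{eqnarray*}
p_1^*\mu\wedge\alpha=c\cdot\varepsilon_1\wedge\cdots\wedge\varepsilon_n\wedge\Big(\sum_{|J|=m}a_{\emptyset J}\,\varphi_J\Big)\,.
\end{eqnarray*}

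Now I compute $\widetilde{\alpha}$ using the defining formula \eqref{equation le secret de l'integrale}. Since the inputs are of the form $(0,v_k)\,,$ every $\varepsilon_i$ evaluates to zero on them, hence only the purely $\varphi_J$-terms survive and
\begin{eqnarray*}
\widetilde{\alpha}=\sum_{|J|=m}a_{\emptyset J}\,\varphi_J\,.
\end{eqnarray*}
Wedging with $p_1^*\mu$ on the left then gives exactly the right-hand side of the previous display, establishing $p_1^*\mu\wedge\alpha=p_1^*\mu\wedge\widetilde{\alpha}\,.$

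There is essentially no obstacle here; the only subtle point is making sure the identification $\varepsilon_i=p_1^*(e_i^*|_{E_1})$ is correctly set up so that $p_1^*\mu$ really is of the announced form in the basis of $\Lambda^n E^*\,.$ The underlying geometric content is simply that $p_1^*\mu$ ``saturates'' the $E_1^*$-directions, so only the component of $\alpha$ living in $\Lambda^m E_2^*$ (viewed inside $\Lambda^m E^*$ via $p_2^*$) can contribute to the wedge, and $\widetilde{\alpha}$ is tailored to be precisely that component.
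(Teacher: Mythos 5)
Your proof is correct and follows essentially the same route as the paper's: expand $\alpha$ in the product basis, observe that wedging with $p_{1}^{*}\mu$ kills every term containing a factor dual to $E_{1}\,,$ and identify $\widetilde{\alpha}$ as exactly the surviving pure $E_{2}$-component. The only difference is cosmetic (your coefficient notation $a_{IJ}$ versus the paper's $\alpha_{i_{1}\ldots i_{m}}$), and your closing remark about $p_{1}^{*}\mu$ saturating the $E_{1}^{*}$-directions is a fair summary of the paper's computation.
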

\begin{proof}
	Let $\{x_{1},...,x_{n}\}$ be a basis of $E_{1}\,,$ $\{y_{1},...,y_{m}\}$ 
	a basis of $E_{2}$ and let $\{z_{1},...,z_{n+m}\}$ 
	denote the basis of $E$ canonically associated, i.e., $\{z_{1},...,z_{n+m}\}:=
	\{(x_{1},0),...,$ $(x_{n},0),(0,y_{1}),...,(0,y_{m})\}\,.$ We can write 
	\begin{eqnarray}
		 \mu=\kappa\cdot x_{1}^{*}\wedge\cdots\wedge 
			x_{n}^{*}\,\,\,\,\,(\kappa\in \mathbb{R}^{*})\,\,\,\,\,\textup{and}\,\,\,\,\,
			\displaystyle\alpha=\sum_{1\leq i _{1}<\cdots
			<i_{m}\leq n+m}\,\alpha_{i_{1}...i_{m}}
			\cdot z_{i_{1}}^{*}\wedge\cdots\wedge z_{i_{m}}^{*}\,.
	\end{eqnarray}
	We then have,
	\begin{eqnarray}\label{equation je sais pas quoi mettre}
		p_{1}^{*}\,\mu\wedge \alpha&=&\kappa\cdot 
			z_{1}^{*}\wedge\cdots\wedge z_{n}^{*}\wedge
			\sum_{1\leq i _{1}<\cdots<i_{m}\leq n+m}\,
			\alpha_{i_{1}...i_{m}}\,z_{i_{1}}^{*}\wedge\cdots\wedge z_{i_{m}}^{*}\nonumber\\
		&=&\kappa\,\sum_{1\leq i _{1}<\cdots<i_{m}\leq n+m}\,
			\alpha_{i_{1}...i_{m}}\cdot z_{1}^{*}\wedge\cdots
			\wedge z_{n}^{*}\wedge z_{i_{1}}^{*}\wedge\cdots\wedge z_{i_{m}}^{*}\nonumber\\
		&=&\kappa\,\alpha_{n+1...n+m}\cdot z_{1}^{*}\wedge\cdots
			\wedge z_{n}^{*}\wedge z_{n+1}^{*}\wedge\cdots\wedge 
			z_{n+m}^{*}\nonumber\,\,\,\,\,\,\,\,\,\,\,\,\,\,\,\,\,\,\text{}\\
		&=&\kappa\,\alpha\Big((0,y_{1}),...,(0,y_{m})\Big)
			\cdot z_{1}^{*}\wedge\cdots\wedge z_{n+m}^{*}
			\,.\,\,\,\,\,\,\,\,\,\,\,\,\,\,\,\,\,\,\,\,\,\,\,\,\,\,\,\,\,\,\text{}
	\end{eqnarray}
	On the other hand, if
	$$
		\widetilde{\alpha}=\sum_{1\leq i _{1}<\cdots<i_{m}\leq n+m}\,
		\widetilde{\alpha}_{i_{1}...i_{m}}\cdot z_{i_{1}}^{*}\wedge\cdots\wedge z_{i_{m}}^{*}\,,
	$$
	then, according to \eqref{equation le secret de l'integrale}\,,
	\begin{eqnarray}
		\widetilde{\alpha}_{i_{1}...i_{m}}=
		\widetilde{\alpha}\Big(z_{i_{1}},...,z_{i_{m}}\Big)
		=
		\left\lbrace  \label{equation technique}
		\begin{array}{cc}
			\alpha\Big((0,y_{1}),...,(0,y_{m})\Big)\,,&\,\,\, 
			\text{for}\,\,\, (i_{1},...,i_{m})=(1,...,m)\,,\\
			0&\text{otherwise}\,.
		\end{array}
		\right.
	\end{eqnarray}
	The equality between \eqref{equation je sais pas quoi mettre} 
	and $p_{1}^{*}\,\mu\wedge\widetilde{\alpha}$ now follows from 
	\eqref{equation technique}.
\end{proof}
	For the second lemma, we fix a local trivialization 
	$(U,\varphi)$ of $B\,:$
	$$
		\begin{diagram}
			\node{\pi^{-1}(U)} \arrow[2]{e,t}{\displaystyle\Psi} 
			\arrow{se,b}{\displaystyle\pi} \node[2]{U\times G} 
			\arrow{sw,b}{\displaystyle pr_{1}}\\
			\node[2]{U}
		\end{diagram}\,\,\,,
	$$
	(the map $\Psi$ being $G$-equivariant).
\begin{lemma}\label{lemme la cle}
	We have 
	\begin{eqnarray}
		(\Psi^{-1})^{*}\mu^{P}=(V\circ pr_{1})\cdot \big(pr_{1}^{*}\,
		\mu^{B}\big)\wedge\big(pr_{2}^{*}\,\nu^{G}\big)\,.
	\end{eqnarray}
\end{lemma}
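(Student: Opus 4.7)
The plan is to pull back the formula for $\mu^P$ from Proposition \ref{proposition formule integration} along $\Psi^{-1}$, split the resulting form into its horizontal and vertical parts, and apply Lemma \ref{lemme pas assez de place} to identify the vertical factor with $pr_2^*\nu^G$. Since the diagram in the statement commutes, we have $\pi\circ\Psi^{-1}=pr_{1}$, so the horizontal factor simplifies immediately:
\begin{eqnarray*}
(\Psi^{-1})^{*}\mu^{P}=(\Psi^{-1})^{*}\bigl(\pi^{*}(V\mu^{B})\bigr)\wedge(\Psi^{-1})^{*}(\theta^{*}\nu_{e}^{G})
= (V\circ pr_{1})\cdot pr_{1}^{*}\mu^{B}\wedge \widetilde{\theta}^{*}\nu_{e}^{G},
\end{eqnarray*}
where $\widetilde{\theta}:=(\Psi^{-1})^{*}\theta\in\Omega^{1}(U\times G,\mathfrak{g})$.

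Next, I would compute $\widetilde{\theta}$ on purely vertical tangent vectors at a point $(u,g)\in U\times G$. Writing $\Psi(x)=(\pi(x),s(x))$ with $s:\pi^{-1}(U)\to G$ satisfying $s(\vartheta_{h}(x))=s(x)\cdot h$ by $G$-equivariance of $\Psi$, a vector of the form $(0,w)\in T_{(u,g)}(U\times G)$ pulls back to a vertical vector of $T_{x}P$, where $x:=\Psi^{-1}(u,g)$. Writing $w=(R_{g})_{*_{e}}\eta$ for the unique $\eta\in\mathfrak{g}$ (so $\eta=(R_{g^{-1}})_{*_{g}}w$), a direct computation using $\Psi(\vartheta_{\exp(t\eta)}(x))=(u,g\exp(t\eta))$ shows $(\Psi^{-1})_{*_{(u,g)}}(0,w)=(\vartheta_{x})_{*_{e}}\eta$, and therefore
\begin{eqnarray*}
\widetilde{\theta}_{(u,g)}(0,w)=\theta_{x}\bigl((\vartheta_{x})_{*_{e}}\eta\bigr)=\eta=(R_{g^{-1}})_{*_{g}}w.
\end{eqnarray*}

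With this identity in hand, the right-invariance of $\nu^{G}$ (which follows from its bi-invariance, itself a consequence of $G$ being compact and connected) yields, for any $(0,v_{1}),\ldots,(0,v_{m})\in T_{(u,g)}(U\times G)$,
\begin{eqnarray*}
(\widetilde{\theta}^{*}\nu_{e}^{G})_{(u,g)}\bigl((0,v_{1}),\ldots,(0,v_{m})\bigr)
=\nu_{e}^{G}\bigl((R_{g^{-1}})_{*}v_{1},\ldots,(R_{g^{-1}})_{*}v_{m}\bigr)
=\nu_{g}^{G}(v_{1},\ldots,v_{m}),
\end{eqnarray*}
which coincides with $(pr_{2}^{*}\nu^{G})_{(u,g)}\bigl((0,v_{1}),\ldots,(0,v_{m})\bigr)$. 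Thus $\widetilde{\theta}^{*}\nu_{e}^{G}$ and $pr_{2}^{*}\nu^{G}$ agree on all vertical $m$-tuples, i.e.~their ``tildes'' in the sense of Lemma \ref{lemme pas assez de place} coincide.

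The conclusion follows by applying Lemma \ref{lemme pas assez de place} at each $(u,g)$ with $E_{1}=T_{u}U$, $E_{2}=T_{g}G$ and $\mu=\mu^{B}_{u}$: since $pr_{1}^{*}\mu^{B}\wedge\alpha$ only depends on the vertical evaluation of $\alpha$, we get $pr_{1}^{*}\mu^{B}\wedge\widetilde{\theta}^{*}\nu_{e}^{G}=pr_{1}^{*}\mu^{B}\wedge pr_{2}^{*}\nu^{G}$, and multiplying by $V\circ pr_{1}$ gives the claimed formula. The only real subtlety is the vertical computation of $\widetilde{\theta}$, where one must correctly track the $G$-equivariance of the trivialization and the right-invariance of $\nu^{G}$; everything else is bookkeeping.
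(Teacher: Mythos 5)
Your proof follows essentially the same route as the paper: pull back the formula of Proposition \ref{proposition formule integration} using $\pi\circ\Psi^{-1}=pr_{1}$, compute $(\Psi^{-1})^{*}\theta$ on vertical vectors via the section underlying the trivialization, and finish with Lemma \ref{lemme pas assez de place}. The structure and the conclusion are correct. One slip in the vertical computation: since $\Psi$ intertwines the $G$-action with \emph{right} multiplication on the $G$-factor, the curve $t\mapsto\vartheta_{\exp(t\eta)}(x)$ corresponds to $t\mapsto (u,g\exp(t\eta))$, whose velocity is $(0,(L_{g})_{*_{e}}\eta)$, not $(0,(R_{g})_{*_{e}}\eta)$; hence $(\Psi^{-1})_{*_{(u,g)}}(0,w)=(\vartheta_{x})_{*_{e}}\eta$ holds with $\eta=(L_{g^{-1}})_{*_{g}}w$, and the correct identity is $\widetilde{\theta}_{(u,g)}(0,w)=(L_{g^{-1}})_{*_{g}}w$ (this is exactly the relation $\vartheta_{\vartheta_{s(x)}(g)}^{-1}\circ\vartheta_{s(x)}=L_{g^{-1}}$ used in the paper). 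Your final formula survives only because $\nu^{G}$ is bi-invariant, so replacing right-invariance by left-invariance in the last display still gives $\nu^{G}_{g}(v_{1},\ldots,v_{m})$; but as written the asserted expression for $\widetilde{\theta}$ on vertical vectors is false for nonabelian $G$, and you should correct the translation before invoking invariance.
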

\begin{proof}
	From \eqref{equation formule forme volume}, 
	\begin{eqnarray}
		(\Psi^{-1})^{*}\mu^{P}&=&(\Psi^{-1})^{*}\,\Big((V\circ\pi)
			\cdot\pi^{*}\mu^{B}\wedge\theta^{*}\,\nu_{e}^{G}\Big)\nonumber\\
		&=&(V\circ\pi\circ\Psi^{-1})\cdot\bigg(\Big((\Psi^{-1})^{*}\pi^{*}
			\mu^{B}\Big)\wedge\Big((\Psi^{-1})^{*}\theta^{*}\,
			\nu_{e}^{G}\Big)\bigg)\nonumber\\
		&=&(V\circ pr_{1})\cdot\bigg(\Big(pr_{1}^{*}\,\mu^{B}\Big)
			\wedge\Big((\Psi^{-1})^{*}\theta^{*}\,\nu_{e}^{G}\Big)\bigg)
			\label{equation formule penible}\,.
	\end{eqnarray}
	For $(x,g)\in U\times G\,,$ $u_{1},...,u_{m}\in T_{x}B$ 
	and $\xi_{1},...,\xi_{m}\in T_{g}G$ (we assume the dimension 
	of $G$ equal to $m$), we have :
	\begin{eqnarray}
		&& \Big((\Psi^{-1})^{*}\theta^{*}\,\nu_{e}^{G}\Big)_{(x,g)}
			\Big((u_{1},\xi_{1}),...,(u_{m},\xi_{m})\Big)\nonumber\\
		&=& \Big(\theta^{*}\,\nu_{e}^{G}\Big)_{(\Psi^{-1})(x,g)}
			\Big(\Psi^{-1}_{*_{(x,g)}}(u_{1},\xi_{1}),...,
			\Psi^{-1}_{*_{(x,g)}}(u_{m},\xi_{m})\Big)\label{equation qwertzzjvvh}\\
		&=&\nu_{e}^{G}\bigg(\theta_{(\Psi^{-1})(x,g)}
			\Big(\Psi^{-1}_{*_{(x,g)}}(u_{1},\xi_{1})\Big),...,
			\theta_{(\Psi^{-1})(x,g)}\Big(\Psi^{-1}_{*_{(x,g)}}(u_{m},
			\xi_{m})\Big)\bigg)\nonumber\,.
	\end{eqnarray}
	Let $s\,:\,U\rightarrow P$ be the local section which characterizes 
	the trivialization $\Psi\,,$ i.e., 
	$$
	\Psi^{-1}(x,g)=\vartheta_{g}\big(s(x)\big)=\vartheta\big(s(x),g\big)\,,
	$$
	for all $(x,g)\in U\times G\,.$ For $i\in\{1,...,m\}\,,$ 
	we have 
	\begin{eqnarray*}
		\Psi^{-1}_{*_{(x,g)}}(u_{i},\xi_{i})&=&
		\begin{bmatrix}
			(\vartheta_{g})_{*}s_{*_{x}}&(\vartheta_{s(x)})_{*_{g}}
		\end{bmatrix}
		\begin{bmatrix}
		u_{i}\\
		\xi_{i}
		\end{bmatrix}\\
		&=&(\vartheta_{g})_{*}s_{*_{x}}u_{i}+(\vartheta_{s(x)})_{*_{g}}\xi_{i}\,\,,
	\end{eqnarray*}
	which yields, together with \eqref{invariance connection}\,,
	\begin{eqnarray}
		\theta_{(\Psi^{-1})(x,g)}\Big(\Psi^{-1}_{*_{(x,g)}}
			(u_{i},\xi_{i})\Big)&=&\theta_{\vartheta_{g}\big(s(x)\big)}
			\Big((\vartheta_{g})_{*}s_{*_{x}}u_{i}\Big)+
			\theta_{\vartheta_{g}\big(s(x)\big)}
			\Big((\vartheta_{s(x)})_{*_{g}}\xi_{i}\Big)\nonumber\\
		&=&Ad(g^{-1})\,\theta_{s(x)}\Big(s_{*_{x}}u_{i}\Big)
			+\theta_{\vartheta_{g}\big(s(x)\big)}
			\Big((\vartheta_{s(x)})_{*_{g}}\xi_{i}\Big)\,.\label{equation calcul qui tue}
	\end{eqnarray}
	We can notice in formula \eqref{equation calcul qui tue}\,, that
	\begin{eqnarray}
		\theta_{\vartheta_{g}\big(s(x)\big)}\Big((\vartheta_{s(x)})_{*_{g}}
			\xi_{i}\Big)&=&(\vartheta_{\vartheta_{s(x)}(g)}^{-1})_{*_{\vartheta_{g}
			(s(x))}}(\vartheta_{s(x)})_{*_{g}}\xi_{i}\nonumber\\
		&=&\Big(\underbrace{\vartheta_{\vartheta_{s(x)}(g)}^{-1}
			\circ \vartheta_{s(x)}}_{=L_{g^{-1}}}\Big)_{*_{g}}\xi_{i}
			=(L_{g^{-1}})_{*_{g}}\xi_{i}\,.\label{equation translation a gauche}
	\end{eqnarray}
	It follows, taking $u_{i}=0$ in \eqref{equation qwertzzjvvh}, that :
	\begin{eqnarray*}
		&&\nu_{e}^{G}\bigg(\theta_{(\Psi^{-1})(x,g)}
			\Big(\Psi^{-1}_{*_{(x,g)}}(0,\xi_{1})\Big),...,
			\theta_{(\Psi^{-1})(x,g)}\Big(\Psi^{-1}_{*_{(x,g)}}(0,\xi_{m})\Big)\bigg)\\
		&=& \nu_{e}^{G}\bigg((L_{g^{-1}})_{*_{g}}\xi_{1},...,(L_{g^{-1}})_{*_{g}}
			\xi_{m}\bigg)=\nu_{g}^{G}\bigg(\xi_{1},...,\xi_{m}\bigg)\\
		&=&\Big(pr_{2}^{*}\,(\nu^{G})\Big)_{(x,g)}\Big((0,\xi_{1}),...,(0,\xi_{m})\Big)\,.
	\end{eqnarray*}
	One concludes by applying Lemma \ref{lemme pas assez de place}\,.
\end{proof}
\begin{proof}[Proof of Proposition \ref{proposition vraiment formule integration}]
	Let $\big\{(U_{i},\varphi_{i})\,\big\vert\,i\in \{1,...,s\}\big\}$ 
	be an atlas of $B$ whose charts $(U_{i},\varphi_{i})$ are positive 
	and trivializing:
	$$
	\begin{diagram}
		\node{\pi^{-1}(U_{i})} \arrow[2]{e,t}{\displaystyle\Psi_{i}} 
		\arrow{se,b}{\displaystyle\pi} \node[2]{U_{i}\times G} 
		\arrow{sw,b}{\displaystyle pr_{1}^{i}}\\
		\node[2]{U_{i}}
	\end{diagram}\,\,\,.
	$$
	We also take $\Big\{\Big(\big(U_{i},\varphi_{i}\big),\alpha_{i}\Big)\,
	\Big\vert\,i\in \{1,...,s\}\Big\}$ a partition of unity of 
	$B$ subordinate to $\big\{U_{i}\,\big\vert\,i\in \{1,...,s\}\big\}\,.$\\
	We take on $U_{i}\times G$ the orientation induced by the volume 
	form $ \big((pr_{1}^{i})^{*}(V\mu^{B})\big)\wedge
	\big((pr_{2}^{i})^{*}\,\nu^{G}\big)\,.$ For this orientation, 
	$\Psi_{i}$ is a diffeomorphism which preserves orientation and 
	from Lemma \ref{lemme la cle}, we have 
	\begin{eqnarray*}
		\int_{P}\,(f\circ\pi)\cdot\mu^{P}&=&\sum_{i=1}^{s}\,
			\int_{P}\,(\alpha_{i}\circ\pi)\cdot (f\circ\pi)\cdot\mu^{P}=
			\sum_{i=1}^{s}\,\int_{\pi^{-1}(U_{i})}\,(\alpha_{i}\circ\pi)
			\cdot (f\circ\pi)\cdot\mu^{P}\,\,\,\,\,\,\,\,\,\text{}\nonumber\\
		&=&\sum_{i=1}^{s}\int_{U_{i}\times G}\big((\alpha_{i}{\cdot} f)
			\circ\pi\circ\Psi_{i}^{-1}\big){\cdot}(\Psi_{i}^{-1})^{*}\mu^{P}=
			\sum_{i=1}^{s}\int_{U_{i}\times G}\,
			\big((\alpha_{i}{\cdot} f){\circ} pr_{1}^{i}\big)
			{\cdot}(\Psi_{i}^{-1})^{*}\mu^{P}\\
		&=&\sum_{i=1}^{s}\,\int_{U_{i}\times G}\,\big((\alpha_{i}\cdot f)
			\circ pr_{1}^{i}\big)\cdot \Big((pr_{1}^{i})^{*}\,
			(V\mu^{B})\Big)\wedge\Big((pr_{2}^{i})^{*}\,\nu^{G}\Big)\\
		&=&\sum_{i=1}^{s}\,\text{Volume}\,(G)\,\int_{U_{i}}\,
			\alpha_{i}\cdot f\cdot V\mu^{B}=\int_{B}\, f\cdot V\mu^{B}\,.
	\end{eqnarray*} 
	This proves the proposition.
\end{proof}
\section{The Euler equation of $\text{SAut}\,(P,\mu^{P})$}\label{section equation euler}
	For a Fr\'echet Lie algebra $(\mathfrak{g},\,[\,,\,])$ endowed 
	with a continuous, symmetric, weakly non-degenerate and 
	positive-definite bilinear form $<\,,\,>\,,$ we define the regular 
	dual $\mathfrak{g}_{reg}^{*}\subseteq \mathfrak{g}^{*}$ of 
	$\mathfrak{g}$ as the range of the injective and continuous 
	operator $\mathfrak{g}\rightarrow \mathfrak{g}^{*},\,\xi\rightarrow {<}
	\xi,\,.\,{>}\,.$ For $\xi\in \mathfrak{g}\,,$ we also define the 
	operator $ad^{*}(\xi)\,:\,\mathfrak{g}_{reg}^{*}\rightarrow 
	\mathfrak{g}^{*}$ via the formula :
	\begin{eqnarray}
		(ad^{*}(\xi)\,\alpha,\xi'):=-(\alpha,ad(\xi)\,\xi')\,,
	\end{eqnarray}
	where $\alpha\in \mathfrak{g}_{reg}^{*}$ and $\xi'\in\mathfrak{g}\,.$
	Observe that the range of $ad^{*}(\xi)$ is not necessarily 
	included in $\mathfrak{g}_{reg}^{*}$ 
	(it is the case, for example if $ad(\xi)$ possesses a 
	transpose with respect to the metric $<\,,\,>$).
\begin{definition}
	If $ad^{*}(\xi)$ takes values in $\mathfrak{g}_{reg}^{*}$ for 
	all $\xi\in \mathfrak{g}\,,$ we define the Euler equation 
	associated to the Lie algebra $\mathfrak{g}$ with respect 
	to the metric $<\,,\,>$ as :
	\begin{eqnarray}\label{definition equation poisson}
		\dfrac{d}{dt}\eta =ad^{*}(\eta^{\sharp})\,\eta\,,
	\end{eqnarray}
	where $\eta$ is a smooth curve in $\mathfrak{g}_{reg}^{*}$ and 
	where $``\,\,\overset{\sharp}{\text{}}\,\,"\,:\,
	\mathfrak{g}^{*}_{reg}\rightarrow\mathfrak{g}$ denotes the 
	canonical operator induced by the metric $<\,,\,>\,.$
\end{definition}
\begin{remarque}\label{remarque sur le lien EUler geodesiques}
	If $g$ is a geodesic in a finite dimensional Lie group $G$ with respect to a right-invariant 
	metric $<\,,\,>\,,$ then the curve $\eta:=[(R_{g^{-1}})_{*g}\dot{g}]^{\flat}\,,$ 
	where $R_{g_{-1}}\,:\,G\rightarrow G\,,h\mapsto hg^{-1}\,,$ is a curve in 
	$\mathfrak{g}^{*}$ satisfying the 
	Euler equation \eqref{definition equation poisson}\,. Conversely, 
	if $\eta$ is a curve in $\mathfrak{g}^{*}$ satisfying \eqref{definition equation poisson},
	then one may recover a geodesic in $G$ via the 
	``reconstruction procedure", i.e., by solving a specific first order differential equation 
	(see \cite{Marsden-Ratiu} for more details). 
	The geodesic equation on a Lie group with
	respect to a right-invariant metric and the Euler equation 
	\eqref{definition equation poisson} are thus equivalent.
\end{remarque}
	We want next to determine the Euler equation of the Lie algebra 
	of the group 
	\begin{eqnarray}
	\textup{SAut}(P,\mu^{P}):=
	\{\varphi\in\textup{Diff}(P)\,|\,\varphi^{*}\mu^{P}=
	\mu^{P}\,\,\text{and}\,\,\varphi\,\,\text{is}\,\,G\text{-equivariant}\}\,,
	\end{eqnarray}
	with 
	respect to a natural 
	$L^{2}$-metric (see \eqref{equation def de la metrique})\,. Note that 
	$\textup{SAut}(P,\mu^{P})=\textup{SDiff}(P,\mu^{P})^{G}$ 
	and thus it is a tame Lie group by Proposition \ref{SDiff G}\,, and its 
	Lie algebra is the space $\mathfrak{X}(P,\mu^{P})^{G}$ endowed with the 
	opposite of the usual vector field bracket. Note also that 
	$\textup{SAut}(P,\mu^{P})=\textup{Aut}(P)\cap \textup{SDiff}(P,\mu^{P})$ where 
	\begin{eqnarray}
		\textup{Aut}(P):=\{\varphi\in\textup{Diff}(P)\,\vert\,
		\varphi\,\,\text{is}\,\,G\text{-equivariant}\}
	\end{eqnarray}
	is the group of smooth automorphisms of $P\,.$

\subsection{The identification of $\mathfrak{X}(P,\mu^{P})^{G}$ 
	and $\mathfrak{X}(B,V \mu^{B})\oplus C^{\infty}(P,\mathfrak{g})^{G}$}
	Set 
	$$
		C^{\infty}(P,\mathfrak{g})^{G}:=\{f\in C^{\infty}
		(P,\mathfrak{g})\,\big\vert\,f\circ 
		\vartheta_{g}=Ad(g^{-1})\,f\,\,,\forall g\in G\}
	$$
	and define $\Phi\,:\,\mathfrak{X}(P)^{G}\rightarrow
	\mathfrak{X}(B)\oplus C^{\infty}(P,\mathfrak{g})^{G}$ as :
	\begin{eqnarray}\label{definition phi}
		\Phi(X):=\Big(\pi_{*}X^{h},\,\theta(X^{v})\Big)\,,
	\end{eqnarray}
	where $X\in \mathfrak{X}(P)^{G}$ and where $\pi_{*}X^{h}\in 
	\mathfrak{X}(B)$ denotes the vector field defined for 
	$x=\pi(y)\in B\,,$ by $(\pi_{*}X^{h})_{x}:=\pi_{*_{y}}X^{h}_{y}\,.$ 
	One can check using Remark \ref{remarque metrique} and 
	\eqref{invariance connection} that $\Phi$ is well defined and 
	invertible, the inverse being given by $\big(\Phi^{-1}(X,f)\big)_{x}
	=X^{*}_{x}+(\vartheta_{x})_{*_{e}}f(x)\,,$ where 
	$X\in \mathfrak{X}(B)\,,$ $f\in C^{\infty}(P,\mathfrak{g})^{G}$ 
	and $x\in P\,.$ The space $\mathfrak{X}(P)^{G}$ being a Lie algebra, 
	$\mathfrak{X}(B)\oplus C^{\infty}(P,\mathfrak{g})^{G}$ 
	naturally inherits a Lie algebra structure. More precisely, 
\begin{proposition}\label{proposition identification crochet}
	The Lie bracket of the Lie algebra 
	$\mathfrak{X}(B)\oplus C^{\infty}(P,\mathfrak{g})^{G}$ is given by :
		\begin{eqnarray}
			\big[(Z,f),(Z',f')\big]=-\bigg([Z,Z'],\,[f,f']+Z^{*}(f')-(Z')^{*}(f)
			+\Omega\Big(Z^{*},\,(Z')^{*}\Big)\bigg)
			\,\,\,\,\,\,\,\,\,\,\,\,\text{}\label{equation le crochet de lie}
		\end{eqnarray}
	where $\Omega\in \Omega^{2}(P,\mathfrak{g})$ is the curvature of 
	the connection $\theta\,,$ i.e., $\Omega_{x}(X,Y)=
	\theta_{x}([X^{h},Y^{h}])$ for $x\in P$ and $X,Y\in T_{x}P\,.$
\end{proposition}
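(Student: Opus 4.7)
The plan is to transport the Lie bracket via $\Phi^{-1}(Z,f)_{x}=Z^{*}_{x}+f^{P}_{x}$, where $f^{P}_{x}:=(\vartheta_{x})_{*_{e}}f(x)$, from $\mathfrak{X}(P)^{G}$ to $\mathfrak{X}(B)\oplus C^{\infty}(P,\mathfrak{g})^{G}$. I will expand the usual vector field bracket $[Z^{*}+f^{P},\,(Z')^{*}+(f')^{P}]$ on $P$ into four bilinear pieces, compute each via its horizontal and vertical components, then reassemble. The global minus sign in the statement is forced by the fact that the Lie algebra of $\textup{SAut}(P,\mu^{P})\subset\textup{Diff}(P)$ carries the \emph{opposite} of the usual vector field bracket.

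For the horizontal piece and the two mixed pieces I use standard principal bundle calculus. Since horizontal lifts are $\pi$-related to their bases, $\pi_{*}[Z^{*},(Z')^{*}]=[Z,Z']$, so the horizontal part of $[Z^{*},(Z')^{*}]$ is $[Z,Z']^{*}$; the vertical part is read off directly from the paper's definition $\Omega(X,Y)=\theta([X^{h},Y^{h}])$, which yields $\theta([Z^{*},(Z')^{*}])=\Omega(Z^{*},(Z')^{*})$. For $[Z^{*},(f')^{P}]$, projection to $B$ shows the bracket is vertical; the identity $\theta([X,Y])=L_{X}(\theta(Y))-(L_{X}\theta)(Y)$ combined with Cartan's formula $L_{Z^{*}}\theta=i_{Z^{*}}d\theta$ (since $\theta(Z^{*})=0$) and the structure equation for $d\theta$ associated to the paper's sign of $\Omega$ implies $(L_{Z^{*}}\theta)((f')^{P})=0$, so $[Z^{*},(f')^{P}]=(Z^{*}(f'))^{P}$, and similarly $[f^{P},(Z')^{*}]=-((Z')^{*}(f))^{P}$.

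The decisive piece is $[f^{P},(f')^{P}]$, which is vertical because the vertical distribution is integrable, so it suffices to compute its image under $\theta$. Running the same Cartan trick with $X=f^{P}$, using $\theta(f^{P})=f$ and $i_{f^{P}}\Omega=0$, yields $(L_{f^{P}}\theta)(Y)=-[f,\theta(Y)]+df(Y)$; evaluating at $Y=(f')^{P}$ produces $\theta([f^{P},(f')^{P}])=f^{P}(f')-(f')^{P}(f)+[f,f']$. To turn this into a pointwise expression I invoke the $G$-equivariance of $f$ and $f'$: differentiating $h\circ\vartheta_{g}=\textup{Ad}(g^{-1})h$ at $g=e$ gives $\xi_{P}(h)=-[\xi,h]$ for any $\xi\in\mathfrak{g}$ and any $h\in C^{\infty}(P,\mathfrak{g})^{G}$, so the terms $f^{P}(f')$ and $(f')^{P}(f)$ collapse to pointwise commutators in $\mathfrak{g}$. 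Adding the four pieces, applying $\Phi$, and prepending the overall minus sign then gives the stated formula.

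The main obstacle is sign bookkeeping, all concentrated in the computation of $[f^{P},(f')^{P}]$: one must correctly pin down the structure equation for $d\theta$ compatibly with the paper's sign convention for $\Omega$, the fact that for a right action $\xi\mapsto\xi_{P}$ is a genuine Lie algebra homomorphism (not an antihomomorphism), and the convention fixing the bracket on the gauge factor $C^{\infty}(P,\mathfrak{g})^{G}$. Once these are coherently fixed, the rest of the argument is routine bilinearity in $\Phi$.
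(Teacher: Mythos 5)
Your strategy is the paper's: pull the bracket back through $\Phi^{-1}(Z,f)=Z^{*}+f^{P}$, expand into four bilinear pieces, and prepend the overall minus coming from the group convention. Your Cartan-calculus treatment of the individual pieces is a legitimate substitute for the paper's machinery: the mixed terms $[Z^{*},(f')^{P}]=(Z^{*}(f'))^{P}$ and $[f^{P},(Z')^{*}]=-((Z')^{*}(f))^{P}$ agree with what the paper obtains from its flow-based Lemma \ref{lemme champ vertical}, and reading the horizontal term off the definition $\Omega(X,Y)=\theta([X^{h},Y^{h}])$ replaces the citation of Kobayashi--Nomizu (Lemma \ref{lemme reference kn}). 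Up to that point the two arguments are interchangeable.

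The one step you do not actually carry out is the final evaluation of the vertical--vertical piece, and on your own terms it does not come out as claimed. Your (correct) ingredients are $\theta([f^{P},(f')^{P}])=f^{P}(f')-(f')^{P}(f)+[f,f']$ and $\xi_{P}(h)=-[\xi,h]$ for equivariant $h$. Substituting the second into the first gives $-[f,f']-[f,f']+[f,f']=-[f,f']$ (pointwise bracket), i.e.\ $[f^{P},(f')^{P}]=(-[f,f'])^{P}$ --- which is also what the paper's Lemma \ref{lemme champ vertical} yields when applied to two vertical fields, and what one checks directly on $P=G$, where $f^{P}$ is the right-invariant field generated by $f(e)$ and right-invariant fields bracket anti-homomorphically. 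This is the \emph{opposite} of the $+[f,f']$ in the statement (read with the pointwise bracket), and of the identity $\big[(\vartheta_{x})_{*_{e}}\theta(X^{v}),(\vartheta_{x})_{*_{e}}\theta(Y^{v})\big]=(\vartheta_{x})_{*_{e}}[\theta(X^{v}),\theta(Y^{v})]$ that the paper's proof asserts without justification. So either you must exhibit explicitly the convention on the bracket of $C^{\infty}(P,\mathfrak{g})^{G}$ that absorbs this sign (you allude to such a convention but never fix it), or you must accept that the formula your argument produces differs from the stated one in the sign of $[f,f']$. As it stands, ``adding the four pieces \dots gives the stated formula'' is precisely the assertion that needs proof, and your own sign principles, applied consistently, contradict it.
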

\begin{remarque}
	The minus sign appearing in front of the term 
	\eqref{equation le crochet de lie} comes from the fact that we 
	consider on $\mathfrak{X}(P)^{G}$ the Lie bracket induced by the 
	Lie group structure of $\text{Aut}(P)\,.$
\end{remarque}
	Let us give some lemmas to prove this result.
\begin{lemma}\label{lemme champ vertical}
	Let $X,Y\in \mathfrak{X}(P)^{G}$ be $G$-invariant vector fields with $Y$ vertical. We have 
	\begin{eqnarray}
		[X,Y]_{x}=(\vartheta_{x})_{*_{e}}X_{x}\Big(\theta(Y)\Big)\,,
	\end{eqnarray}
	where $x\in P\,.$
\end{lemma}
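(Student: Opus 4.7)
The plan is to compute $[X,Y]$ via the Lie derivative formula $[X,Y]_x = \mathcal{L}_X Y_x = \frac{d}{dt}\big|_{t=0}(\phi_{-t})_{*_{\phi_t(x)}}Y_{\phi_t(x)}$, where $\{\phi_t\}$ is the (local) flow of $X$, and exploit the $G$-equivariance of $\phi_t$ in an essential way. The key preliminary observation is that, since $Y$ is vertical and $G$-invariant, the function $f:=\theta(Y):P\to\mathfrak{g}$ lies in $C^\infty(P,\mathfrak{g})^G$ (by \eqref{invariance connection}) and $Y$ is completely encoded by $f$ via $Y_y=(\vartheta_y)_{*_e}f(y)$ for every $y\in P$.

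The heart of the argument is the following identity: for every $x\in P$ and every sufficiently small $t$,
\begin{eqnarray}
\phi_{-t}\circ\vartheta_{\phi_t(x)}=\vartheta_x\quad\text{as maps } G\to P.
\end{eqnarray}
Indeed, for $g\in G$, the $G$-equivariance of $\phi_{-t}$ gives $\phi_{-t}(\vartheta_g(\phi_t(x)))=\vartheta_g(\phi_{-t}(\phi_t(x)))=\vartheta_g(x)$, which rewritten with the orbit maps is precisely the stated equality. Differentiating at the identity $e\in G$ yields
\begin{eqnarray}
(\phi_{-t})_{*_{\phi_t(x)}}\circ(\vartheta_{\phi_t(x)})_{*_e}=(\vartheta_x)_{*_e}.
\end{eqnarray}

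Applying this to $Y_{\phi_t(x)}=(\vartheta_{\phi_t(x)})_{*_e}f(\phi_t(x))$ gives $(\phi_{-t})_{*_{\phi_t(x)}}Y_{\phi_t(x)}=(\vartheta_x)_{*_e}f(\phi_t(x))$. Since $(\vartheta_x)_{*_e}$ is a fixed linear map (independent of $t$), it commutes with $\frac{d}{dt}|_{t=0}$, and we conclude
\begin{eqnarray}
[X,Y]_x=\frac{d}{dt}\bigg|_{t=0}(\vartheta_x)_{*_e}f(\phi_t(x))=(\vartheta_x)_{*_e}X_x(f)=(\vartheta_x)_{*_e}X_x(\theta(Y)),
\end{eqnarray}
which is the formula to prove.

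The only nontrivial step is establishing the equivariance identity $\phi_{-t}\circ\vartheta_{\phi_t(x)}=\vartheta_x$; once this is in hand, the rest is just unwinding definitions. I do not anticipate any real obstacle, since the $G$-invariance of $X$ directly transfers to its flow by standard ODE arguments.
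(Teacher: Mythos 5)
Your proof is correct and follows essentially the same route as the paper: both compute $[X,Y]_x$ via the flow of $X$ and exploit the $G$-equivariance of that flow to pull $(\vartheta_{\phi_t(x)})_{*_e}$ back to the fixed linear map $(\vartheta_x)_{*_e}$. The only cosmetic difference is that the paper realizes $Y_{\phi_t(x)}$ as the $s$-derivative of the curve $\vartheta\big(\phi_t(x),\exp(s\,\theta_{\phi_t(x)}(Y))\big)$ and swaps two derivatives, whereas you differentiate the orbit-map identity $\phi_{-t}\circ\vartheta_{\phi_t(x)}=\vartheta_x$ directly at $e$ — same idea, slightly cleaner packaging.
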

\begin{proof}
	We have 
	\begin{eqnarray}\label{equation sous forme de derivee}
		[X,Y]_{x}=\dfrac{d}{dt}\bigg\vert_{0}\,(\varphi_{-t}^{X})
		_{*_{\varphi_{t}^{X}(x)}}Y_{\varphi_{t}^{X}(x)}\,.
	\end{eqnarray}
	Moreover, $Y$ being vertical,
	\begin{eqnarray}\label{equation formule flot}
		Y_{\varphi_{t}^{X}(x)}=\dfrac{d}{ds}\bigg\vert_{0}\,
		\vartheta\Big(\varphi_{t}^{X}(x),\,\text{exp}\,
		\big(s\,\theta_{\varphi_{t}^{X}(x)}(Y)\big)\Big)\,.
	\end{eqnarray}
	Using the $G$-invariance of $X$ together with 
	\eqref{equation formule flot} in 
	\eqref{equation sous forme de derivee}, we get :
	\begin{eqnarray*}
		[X,Y]_{x}&=&\dfrac{d}{dt}\bigg\vert_{0}\,\dfrac{d}{ds}
			\bigg\vert_{0}\,(\varphi_{-t}^{X})
			\bigg(\,\vartheta\Big(\varphi_{t}^{X}(x),\,\text{exp}\,
			\big(s\,\theta_{\varphi_{t}^{X}(x)}(Y)\big)\Big)\bigg)\\
		&=&\dfrac{d}{dt}\bigg\vert_{0}\,\dfrac{d}{ds}\bigg\vert_{0}\,
			\vartheta\Big(x,\,\text{exp}\,\big(s\,
			\theta_{\varphi_{t}^{X}(x)}(Y)\big)\Big)=
			\dfrac{d}{dt}\bigg\vert_{0}\,(\vartheta_{x})_{*_{e}}
			\theta_{\varphi_{t}^{X}(x)}(Y)\\
		&=&(\vartheta_{x})_{*_{e}}\,\dfrac{d}{dt}\bigg\vert_{0}\,
			\theta_{\varphi_{t}^{X}(x)}(Y)=
			(\vartheta_{x})_{*_{e}}X_{x}\Big(\theta(Y)\Big)\,,
	\end{eqnarray*}
	which proves the lemma.
\end{proof}
	The following lemma is proved in \cite{Kobayashi-Nomizu}.
\begin{lemma}\label{lemme reference kn}
	Let $X,Y\in \mathfrak{X}(P)^{G}$ be $G$-invariant vector 
	fields and $x\in P\,.$ We have :
	\begin{eqnarray}
		[X^{h},Y^{h}]_{x}=[(\pi_{*}X^{h}),\,(\pi_{*}Y^{h})]_{x}^{*}+
		(\vartheta_{x})_{*_{e}}\Omega_{x}(X^{h},Y^{h})\,.
	\end{eqnarray}
\end{lemma}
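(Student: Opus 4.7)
The plan is to verify the formula at each point $x \in P$ by decomposing the tangent vector $[X^h, Y^h]_x$ with respect to the orthogonal splitting $T_x P = (T_x\mathcal{O}_x)^\perp \oplus T_x\mathcal{O}_x$ and matching the two terms on the right-hand side with the horizontal and vertical components respectively. Since the second term $(\vartheta_x)_{*_e}\Omega_x(X^h, Y^h)$ is manifestly vertical (it is the image of an element of $\mathfrak{g}$ under the orbit map differential) and the first term $[\pi_* X^h, \pi_* Y^h]^*_x$ is by construction horizontal, it suffices to identify each component separately.

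For the horizontal component I would argue by $\pi$-relatedness. By Remark \ref{remarque metrique} the $G$-invariant horizontal vector fields $X^h$ and $Y^h$ descend to well-defined vector fields $\pi_* X^h$ and $\pi_* Y^h$ on $B$, and by the very definition of horizontal lift they are $\pi$-related to their projections: $\pi_{*_y} X^h_y = (\pi_* X^h)_{\pi(y)}$ for every $y \in P$. The standard naturality of the Lie bracket under $\pi$-relatedness then gives $\pi_{*_x}[X^h, Y^h]_x = [\pi_* X^h, \pi_* Y^h]_{\pi(x)}$. Since the horizontal lift $[\pi_* X^h, \pi_* Y^h]^*_x$ is characterized as the unique horizontal vector in $T_x P$ projecting to $[\pi_* X^h, \pi_* Y^h]_{\pi(x)}$, we conclude that the horizontal part of $[X^h, Y^h]_x$ equals $[\pi_* X^h, \pi_* Y^h]^*_x$.

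For the vertical component I would use the general identity that for any $v \in T_x P$,
\begin{equation*}
v^v = (\vartheta_x)_{*_e}\bigl(\theta_x(v)\bigr),
\end{equation*}
which is immediate from the defining formula $\theta_x(v) = (\vartheta_x^{-1})_{*_x} v^v$ together with the fact that $\vartheta_x : G \to \mathcal{O}_x$ is a diffeomorphism with differential at $e$ inverse to $(\vartheta_x^{-1})_{*_x}$ restricted to $T_x\mathcal{O}_x$. Applying this identity to $v = [X^h, Y^h]_x$ and invoking the given definition of the curvature $\Omega_x(X^h, Y^h) = \theta_x([X^h, Y^h])$ yields $[X^h, Y^h]^v_x = (\vartheta_x)_{*_e}\Omega_x(X^h, Y^h)$.

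Adding the horizontal and vertical parts gives the claimed identity. The only subtle point is the first step, where one must be slightly careful that $\pi_* X^h$ is a genuine (smooth) vector field on $B$, which relies precisely on the $G$-invariance of $X^h$ together with the horizontal lift correspondence of Remark \ref{remarque metrique}; once that is in hand, the $\pi$-relatedness argument is completely routine, and the rest of the proof is a direct unwinding of the definitions of $\theta$ and $\Omega$.
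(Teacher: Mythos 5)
Your proof is correct. Note that the paper itself gives no argument for this lemma at all: it simply cites Kobayashi--Nomizu, where the statement appears as a theorem because there the curvature is defined via the exterior covariant derivative (structure equation) and relating it to the bracket of horizontal lifts requires genuine work. In the present paper, however, the author \emph{defines} $\Omega_{x}(X,Y):=\theta_{x}([X^{h},Y^{h}])$, and under that convention your proof is exactly the right one and is essentially forced: the vertical component of $[X^{h},Y^{h}]_{x}$ equals $(\vartheta_{x})_{*_{e}}\theta_{x}([X^{h},Y^{h}]_{x})$ tautologically, since $(\vartheta_{x})_{*_{e}}$ inverts $(\vartheta_{x}^{-1})_{*_{x}}$ on $T_{x}\mathcal{O}_{x}$, and the only substantive step is the identification of the horizontal component via $\pi$-relatedness of $X^{h}$ with $\pi_{*}X^{h}$, which you carry out correctly (the $G$-invariance of $h^{P}$ guarantees that $X^{h}$ is again $G$-invariant, so $\pi_{*}X^{h}$ is a well-defined smooth field on $B$, as you note). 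Your self-contained argument is thus arguably preferable to the bare citation, and it avoids the sign ambiguities that arise when matching the paper's convention for $\Omega$ against the one in the cited reference.
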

\begin{proof}[Proof of Proposition \ref{proposition identification crochet}]
	Let $X,Y\in \mathfrak{X}(P)^{G}$ be two $G$-invariant vector 
	fields and $x\in P\,.$ From Lemma \ref{lemme champ vertical} 
	and Lemma \ref{lemme reference kn}\,, we get :
	\begin{eqnarray*}
		[X,Y]_{x}&=&[X^{h},Y^{h}]_{x}+[X^{h},Y^{v}]_{x}+[X^{v},
			Y^{h}]_{x}+[X^{v},Y^{v}]_{x}\\
		&=&\Big[(\pi_{*}X^{h}),\,(\pi_{*}Y^{h})\Big]_{x}^{*}+
			(\vartheta_{x})_{*_{e}}\Omega_{x}(X^{h},Y^{h})+
			(\vartheta_{x})_{*_{e}}X^{h}_{x}\Big(\theta(Y^{v})\Big)\\
		&&-(\vartheta_{x})_{*_{e}}Y^{h}_{x}
			\Big(\theta(X^{v})\Big)+\underbrace{\Big[\,(\vartheta_{x})_{*_{e}}
			\theta(X^{v})\,,\,(\vartheta_{x})_{*_{e}}
			\theta(Y^{v})\,\Big]_{x}}_{\displaystyle=
			(\vartheta_{x})_{*_{e}}[\theta(X^{v})\,,\,\theta(Y^{v})]}\\
		&=&\bigg(\Phi^{-1}\bigg(\Big[(\pi_{*}X^{h}),\,
			(\pi_{*}Y^{h})\Big]\,,\,\Big[\theta(X^{v})\,,\,\theta(Y^{v})\Big]+
			X^{h}_{x}\Big(\theta(Y^{v})\Big)-
			Y^{h}_{x}\Big(\theta(X^{v})\Big)+
			\Omega_{x}(X^{h},Y^{h})\bigg)\bigg)_{x}\,,
	\end{eqnarray*}
	which is the desired formula.
\end{proof}
	For $G$-invariant vector fields on $P$ with zero divergence 
	with respect to the volume form $\mu^{P}\,,$ we have the following proposition.
\begin{proposition}\label{proposition isomorphime avec s}
	The map $\Phi$ induces a $\mathbb{R}$-linear isomorphism :
	\begin{eqnarray}
		\mathfrak{X}(P,\mu^{P})^{G}\cong\mathfrak{X}(B,V \mu^{B})
		\oplus C^{\infty}(P,\mathfrak{g})^{G}\,,
	\end{eqnarray}
	i.e., if $X\in \mathfrak{X}(P)^{G}\,,$ then 
	$X\in \mathfrak{X}(P,\mu^{P})^{G}$ if and only if 
	$\Phi(X)\in \mathfrak{X}(B,V \mu^{B})\oplus C^{\infty}(P,\mathfrak{g})^{G}\,.$
\end{proposition}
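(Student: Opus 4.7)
The plan is to reduce the proposition to the pointwise identity
\begin{eqnarray*}
\textup{div}_{\mu^{P}}(X)=\textup{div}_{V\mu^{B}}(\pi_{*}X^{h})\circ\pi
\end{eqnarray*}
valid for every $X\in\mathfrak{X}(P)^{G}$. Once this is established, the equivalence in the proposition follows immediately: since $\pi$ is surjective, the left-hand side vanishes identically if and only if the right-hand side does, so $X\in\mathfrak{X}(P,\mu^{P})^{G}$ if and only if $\pi_{*}X^{h}\in\mathfrak{X}(B,V\mu^{B})$, which (since $\theta(X^{v})$ automatically lies in $C^{\infty}(P,\mathfrak{g})^{G}$ for any $X\in\mathfrak{X}(P)^{G}$) is exactly the condition $\Phi(X)\in\mathfrak{X}(B,V\mu^{B})\oplus C^{\infty}(P,\mathfrak{g})^{G}$.

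To prove the identity, I would first observe that $\textup{div}_{\mu^{P}}(X)$ descends to a function on $B$. Indeed, the $G$-invariance of $X$ and of $\mu^{P}$ gives $\vartheta_{g}^{*}\mathcal{L}_{X}\mu^{P}=\mathcal{L}_{X}\mu^{P}$ for every $g\in G$, and combined with the $G$-invariance and nowhere-vanishing of $\mu^{P}$ this forces the function $\textup{div}_{\mu^{P}}(X)$ to be constant along orbits. Hence there exists a unique $\widetilde{f}\in C^{\infty}(B)$ with $\textup{div}_{\mu^{P}}(X)=\widetilde{f}\circ\pi$, and only the identification of $\widetilde{f}$ remains.

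For this step I would test against functions of the form $g\circ\pi$ with $g\in C^{\infty}(B)$. Since $X^{v}$ is vertical it annihilates $g\circ\pi$, so $X(g\circ\pi)=\bigl((\pi_{*}X^{h})(g)\bigr)\circ\pi$. Applying Stokes' theorem on the closed manifold $P$ to the exact form $d\bigl(\iota_{X}((g\circ\pi)\mu^{P})\bigr)=\mathcal{L}_{X}\bigl((g\circ\pi)\mu^{P}\bigr)$ yields
\begin{eqnarray*}
\int_{P}X(g\circ\pi)\,\mu^{P}+\int_{P}(g\circ\pi)\,\textup{div}_{\mu^{P}}(X)\,\mu^{P}=0,
\end{eqnarray*}
and Proposition \ref{proposition vraiment formule integration} transports this to $\int_{B}(\pi_{*}X^{h})(g)\,V\mu^{B}+\int_{B}g\widetilde{f}\,V\mu^{B}=0$. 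A parallel integration by parts on $B$ gives $\int_{B}(\pi_{*}X^{h})(g)\,V\mu^{B}=-\int_{B}g\,\textup{div}_{V\mu^{B}}(\pi_{*}X^{h})\,V\mu^{B}$, so $\int_{B}g\bigl(\widetilde{f}-\textup{div}_{V\mu^{B}}(\pi_{*}X^{h})\bigr)V\mu^{B}=0$ for every $g\in C^{\infty}(B)$; non-degeneracy of this pairing forces $\widetilde{f}=\textup{div}_{V\mu^{B}}(\pi_{*}X^{h})$, which is the identity we wanted. I do not anticipate a serious obstacle here: the only technical input is Proposition \ref{proposition vraiment formule integration}, and the rest is a clean integration-by-parts argument exploiting the $G$-invariance of $X$ and $\mu^{P}$.
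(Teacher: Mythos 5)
Your argument is correct, but it takes a genuinely different route from the paper. The paper proves the same key identity $\textup{div}_{\mu^{P}}(X)=\bigl(\textup{div}_{V\mu^{B}}(\pi_{*}X)\bigr)\circ\pi$ by a direct pointwise computation: it applies the Leibniz rule for $\mathcal{L}_{X}$ to the explicit factorization $\mu^{P}=(V\circ\pi)\cdot\pi^{*}\mu^{B}\wedge\theta^{*}\nu_{e}^{G}$ from Proposition \ref{proposition formule integration}, using an auxiliary lemma (Lemma \ref{lemme algebre de lie propriete}) whose main content is that $\mathcal{L}_{X}(\theta^{*}\nu_{e}^{G})$ contributes nothing after wedging with $\pi^{*}\mu^{B}$. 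You instead use a weak formulation: first show by $G$-invariance that $\textup{div}_{\mu^{P}}(X)$ descends to $B$, then identify the descended function by testing against pullbacks $g\circ\pi$, integrating by parts on both $P$ and $B$ via Stokes, and invoking the integration formula of Proposition \ref{proposition vraiment formule integration} together with the fundamental lemma of the calculus of variations. Both proofs are valid under the paper's hypotheses. The paper's computation is local and would survive without compactness, and it isolates exactly which structural features of $\mu^{P}$ are responsible for the identity; your argument needs $P$ and $B$ closed (which they are here) and leans entirely on the integration formula, but in exchange it bypasses the Lie-derivative bookkeeping of Lemma \ref{lemme algebre de lie propriete} and the wedge-product vanishing argument. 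One small point worth making explicit in your write-up is the smoothness of the descended function $\widetilde{f}$ (it follows because $\pi$ is a submersion admitting local sections), and that the membership $\theta(X^{v})\in C^{\infty}(P,\mathfrak{g})^{G}$ you quote is exactly the well-definedness of $\Phi$ established just before the proposition, so it is legitimate to use it.
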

	To show Proposition \ref{proposition isomorphime avec s}, we need the 
	following Lemma :
\begin{lemma}\label{lemme algebre de lie propriete}
	For $X\in \mathfrak{X}(P)^{G}\,,$ we have :
	\begin{description}
		\item[$(i)$]  $X(V\circ\pi)=\Big((\pi_{*}X)(V)\Big)\circ\pi\,,$
		\item[$(ii)$] $\mathcal{L}_{X}(\pi^{*}\mu^{B})=
			\pi^{*}\Big(\mathcal{L}_{\pi_{*}X}(\mu^{B})\Big)=
			\big(\text{div}_{\mu^{B}}\,(\pi_{*}X)\circ\pi\big)\cdot \pi^{*}\mu^{B}\,,$
		\item[$(iii)$] $(\pi^{*}\mu^{B})\wedge\mathcal{L}_{X}
			(\theta^{*}\nu_{e}^{G})=0\,.$
	 \end{description}
\end{lemma}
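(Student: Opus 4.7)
The plan is to treat the three statements separately, noting that since $X$ is $G$-invariant it descends to a well-defined vector field $\pi_*X\in\mathfrak{X}(B)$ (equal to $\pi_*X^h$), so that $X$ and $\pi_*X$ are $\pi$-related. Parts $(i)$ and $(ii)$ are essentially applications of the chain rule and the naturality of the Lie derivative, while $(iii)$ is the real content of the lemma and will require exploiting the structure of the vertical distribution.

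For $(i)$, the $G$-invariance of $X$ guarantees that $\pi_{*_y}X_y$ depends only on $\pi(y)$, and I would just expand $X_y(V\circ\pi)=(\pi_{*_y}X_y)(V)=(\pi_*X)_{\pi(y)}(V)$. For $(ii)$, I would use the standard fact that if $X$ and $Y$ are $\pi$-related then $\pi^*\circ\mathcal{L}_Y=\mathcal{L}_X\circ\pi^*$ on forms, applied to $\mu^B$; the second equality is then the definition of the divergence with respect to $\mu^B$.

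Part $(iii)$ is where the argument has to do some work. Both $\pi^*\mu^B$ and $\mathcal{L}_X(\theta^*\nu_e^G)$ are forms on $P$ of respective degrees $n=\dim B$ and $m=\dim G$, so their wedge is a top form, and it suffices to evaluate it on a split frame at each point $y\in P$. Picking $X_1,\dots,X_n\in(T_y\mathcal{O}_y)^{\perp}$ horizontal and $Y_1,\dots,Y_m\in T_y\mathcal{O}_y$ vertical, the only non-vanishing shuffle in the wedge is the trivial one (since $\pi^*\mu^B$ kills any vertical argument), so the value reduces to $(\pi^*\mu^B)_y(X_1,\dots,X_n)\cdot\bigl(\mathcal{L}_X(\theta^*\nu_e^G)\bigr)_y(Y_1,\dots,Y_m)$. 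Thus the task reduces to showing that $\mathcal{L}_X(\theta^*\nu_e^G)$ vanishes whenever evaluated on vertical tangent vectors.

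To achieve this I would extend each $Y_i$ to a fundamental vector field $\xi_i^P$ associated to $\xi_i\in\mathfrak{g}$, which is legitimate because the fundamental vector fields span the vertical subspace at every point. Two facts are then needed: first, since $\vartheta$ is the flow of fundamental vector fields and $X$ is $G$-invariant, one has $[X,\xi_i^P]=0$, so $\theta([X,\xi_i^P])=0$; second, $\theta(\xi_i^P)=\xi_i$ is constant on $P$, hence $(\theta^*\nu_e^G)(\xi_1^P,\dots,\xi_m^P)=\nu_e^G(\xi_1,\dots,\xi_m)$ is a constant function on $P$, killed by $X$. Expanding $\mathcal{L}_X$ as a derivation on $m$-forms through the standard formula $(\mathcal{L}_X\alpha)(Y_1,\dots,Y_m)=X(\alpha(Y_1,\dots,Y_m))-\sum_i\alpha(Y_1,\dots,[X,Y_i],\dots,Y_m)$ then makes both the first and each summand in the second term vanish, yielding $(iii)$. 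The main obstacle is really just this identification of vertical evaluations with fundamental vector fields; once that is in place the computation is essentially two lines.
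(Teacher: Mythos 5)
Your proof is correct, and parts $(i)$ and $(ii)$ coincide with the paper's argument (the paper phrases $(ii)$ via the flow relation $\pi\circ\varphi_{t}^{X}=\varphi_{t}^{\pi_{*}X}\circ\pi$, which is exactly the naturality of the Lie derivative under $\pi$-relatedness that you invoke). For $(iii)$, both arguments reduce the claim to showing that $\mathcal{L}_{X}(\theta^{*}\nu_{e}^{G})$ vanishes whenever all of its arguments are vertical: the paper performs this reduction by citing Lemma \ref{lemme pas assez de place}, while you redo it by hand with the shuffle expansion of the wedge product evaluated on a split frame; these are the same computation. Where you genuinely diverge is in how the vanishing on vertical vectors is established. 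The paper computes $(\mathcal{L}_{X}\theta)_{x}(Y_{x})$ for $Y$ vertical directly from the flow of $X$, using the $G$-equivariance of $\varphi_{t}^{X}$ to see that $\vartheta^{-1}_{\varphi_{t}^{X}(x)}\circ\varphi_{t}^{X}=\vartheta_{x}^{-1}$ on the orbit, whence $\mathcal{L}_{X}\theta$ kills vertical vectors and the Leibniz rule finishes the job. You instead extend the vertical arguments to fundamental vector fields $\xi_{i}^{P}$, observe that $[X,\xi_{i}^{P}]=0$ (which is just the infinitesimal restatement of the $G$-invariance of $X$) and that $\theta(\xi_{i}^{P})=\xi_{i}$ is constant, and then apply the standard formula for the Lie derivative of a form. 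Both routes are correct and of comparable length; yours has the small advantage of never computing $\mathcal{L}_{X}\theta$ and of making the use of the invariance of $X$ purely algebraic, while the paper's isolates the intermediate fact that $\mathcal{L}_{X}\theta$ annihilates vertical vectors, which is of some independent interest.
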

\begin{proof}
	The point (i) is obvious. Let us show (ii)\,. 
	Using the relation 
	$\pi\circ \varphi_{t}^{X}=\varphi_{t}^{\pi_{*}X}\circ \pi\,,$ we see that
	\begin{eqnarray*}
		\mathcal{L}_{X}(\pi^{*}\mu^{B})&=&\dfrac{d}{dt}\bigg\vert_{0}\,
			(\varphi_{t}^{X})^{*}\pi^{*}\mu^{B}=
			\dfrac{d}{dt}\bigg\vert_{0}\,(\pi\circ\varphi_{t}^{X})^{*}\mu^{B}=
			\dfrac{d}{dt}\bigg\vert_{0}\,(\varphi_{t}^{\pi_{*}X}\circ \pi)^{*}\mu^{B}\\
		&=&\dfrac{d}{dt}\bigg\vert_{0}\,\pi^{*}(\varphi_{t}^{\pi_{*}X})^{*}\mu^{B}
			=\pi^{*}\,\dfrac{d}{dt}\bigg\vert_{0}\,(\varphi_{t}^{\pi_{*}X})^{*}\mu^{B}=
			\pi^{*}\Big(\mathcal{L}_{\pi_{*}X}(\mu^{B})\Big)\,.\\
	\end{eqnarray*}
	For $(iii)\,,$ let us take $x\in P$ and $X,Y\in 
	\mathfrak{X}(P)^{G}$ with $Y$ vertical. We have,
	\begin{eqnarray}
		(\mathcal{L}_{X}\theta)_{x}(Y_{x})=\dfrac{d}{dt}\bigg\vert_{0}\,
			\Big((\varphi_{t}^{X})^{*}\theta\Big)_{x}(Y_{x})=
			\dfrac{d}{dt}\bigg\vert_{0}\,\theta_{\varphi_{t}^{X}(x)}
			\Big((\varphi_{t}^{X})_{*_{x}}Y_{x}\Big)=
			\dfrac{d}{dt}\bigg\vert_{0}\,
			(\vartheta_{\varphi_{t}^{X}(x)}^{-1})
			_{*_{\varphi_{t}^{X}(x)}}\,(\varphi_{t}^{X})_{*_{x}}Y_{x}\,.
			\label{equation derivee de lie}
	\end{eqnarray}
	If in \eqref{equation derivee de lie}, we look at $Y_{x}$ as an 
	element of $T_{x}\mathcal{O}_{x}$ and $\varphi_{t}^{X}$ as a 
	diffeomorphim between $\mathcal{O}_{x}$ and $\mathcal{O}_{\varphi_{t}^{X}(x)}\,,$ then,
	\begin{eqnarray*}
		(\mathcal{L}_{X}\theta)_{x}(Y_{x})=
		\dfrac{d}{dt}\bigg\vert_{0}\,(\underbrace{\vartheta_{\varphi_{t}^{X}(x)}^{-1}
		\circ \varphi_{t}^{X}}_{{\displaystyle=
		\vartheta_{x}^{-1}}})_{*_{x}}Y_{x}=
		\dfrac{d}{dt}\bigg\vert_{0}\,(\vartheta_{x}^{-1})_{*_{x}}Y_{x}=0\,.
	\end{eqnarray*}
	Hence, the form $\mathcal{L}_{X}(\theta^{*}\nu_{e}^{G})$ 
	only depends on horizontal vector fields of $P\,.$ 
	Now $(iii)$ follows from Lemma \ref{lemme pas assez de place}\,.
\end{proof}
\begin{proof}[Proof of Proposition \ref{proposition isomorphime avec s}]
	Let $X\in \mathfrak{X}(P)^{G}$ be a $G$-invariant vector field. 
	From \eqref{equation formule forme volume} together with Lemma 
	\ref{lemme algebre de lie propriete}, we have:
	\begin{eqnarray*}
		\mathcal{L}_{X}\mu^{P}&=&\mathcal{L}_{X}\Big((V\circ\pi)\cdot\pi^{*}
			\mu^{B}\wedge\theta^{*}\,\nu_{e}^{G}\Big)\\
		&=&X(V\circ\pi)\cdot\pi^{*}\mu^{B}\wedge\theta^{*}\,
			\nu_{e}^{G}+(V\circ\pi)\cdot\mathcal{L}_{X}
			(\pi^{*}\mu^{B})\wedge\theta^{*}\,\nu_{e}^{G}+
			(V\circ\pi)\cdot\pi^{*}\mu^{B}\wedge\mathcal{L}_{X}
			(\theta^{*}\,\nu_{e}^{G})\\
		&=&\Big((\pi_{*}X)(V)\Big)\circ\pi\cdot\dfrac{1}{V\circ\pi}
			\cdot\mu^{P}+\big(\text{div}_{\mu^{B}}\,(\pi_{*}X)
			\circ\pi\big)\cdot\mu^{P}\\
		&=&\bigg(\Big((\pi_{*}X)(V)\Big)\cdot\dfrac{1}{V}+
			\text{div}_{\mu^{B}}\,(\pi_{*}X)\bigg)\circ\pi\cdot\mu^{P}\\
		&=&\Big(\text{div}_{V \mu^{B}}\,(\pi_{*}X)\Big)\circ\pi\cdot\mu^{P}\,.
	\end{eqnarray*}
	Hence,
	$$
		\text{div}_{\mu^{P}}\,(X)=\Big(\text{div}_{V \mu^{B}}\,(\pi_{*}X)\Big)\circ\pi\,,
	$$
	which proves the proposition.
\end{proof}
	Finally, $\mathfrak{X}(B,V \mu^{B})$ and $C^{\infty}(P,\mathfrak{g})^{G}$ 
	being closed subspaces of the Fr\'echet spaces 
	$\mathfrak{X}(B)$ and $C^{\infty}(P,\mathfrak{g})$ respectively, 
	we naturally get a structure of Fr\'echet space on 
	$\mathfrak{X}(B,V \mu^{B})\oplus\, C^{\infty}(P,\mathfrak{g})^{G}\,.$ 
	We denote by $\widetilde{\Phi}\,:\,\mathfrak{X}(P,\mu^{P})^{G}\rightarrow 
	\mathfrak{X}(B,V \mu^{B})\oplus C^{\infty}(P,\mathfrak{g})^{G}$ 
	the restriction of $\Phi$ to $\mathfrak{X}(P,\mu^{P})\,.$
\begin{lemma}\label{lemme la continuite de l isomorphsime}
	The map $\widetilde{\Phi}$ is a continuous $\mathbb{R}-$linear 
	isomorphism between Fr\'echet spaces.
\end{lemma}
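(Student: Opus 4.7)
The plan is to reduce the claim to a continuity statement and then invoke the open mapping theorem for Fr\'echet spaces. Indeed, by Proposition \ref{proposition isomorphime avec s}, the map $\widetilde{\Phi}$ is already known to be an $\mathbb{R}$-linear bijection; and since both its domain $\mathfrak{X}(P,\mu^{P})^{G}$ and its codomain $\mathfrak{X}(B,V\mu^{B})\oplus C^{\infty}(P,\mathfrak{g})^{G}$ are Fr\'echet spaces (being closed subspaces of the Fr\'echet spaces of smooth sections on compact manifolds), a continuous linear bijection between them is automatically a homeomorphism. It therefore suffices to check continuity of $\widetilde{\Phi}\,.$

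First I would check continuity of the ambient map $\Phi\,:\,\mathfrak{X}(P)^{G}\rightarrow\mathfrak{X}(B)\oplus C^{\infty}(P,\mathfrak{g})^{G}\,.$ By definition $\Phi(X)=(\pi_{*}X^{h},\,\theta(X^{v}))\,.$ Each component is obtained from $X$ by combining a smooth bundle-theoretic operation (orthogonal projection onto the horizontal/vertical distribution using the smooth metric $h^{P}\,,$ then either pushforward by the submersion $\pi$ via the canonical identification of Remark \ref{remarque metrique}, or pairing with the smooth $\mathfrak{g}$-valued 1-form $\theta$) with $X\,.$ All these operations are pointwise $C^{\infty}$ in the coefficients of $X\,,$ hence are continuous for the standard Fr\'echet (i.e. $C^{\infty}$-) topology on spaces of smooth sections over the compact manifold $P\,.$ Thus $\Phi$ is continuous.

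Next I would argue that the restriction $\widetilde{\Phi}$ inherits continuity from $\Phi\,.$ The subset $\mathfrak{X}(P,\mu^{P})^{G}\subseteq \mathfrak{X}(P)^{G}$ is closed (the conditions of being divergence-free and $G$-invariant are both preserved under $C^{\infty}$-limits), and carries the induced Fr\'echet topology; similarly $\mathfrak{X}(B,V\mu^{B})\oplus C^{\infty}(P,\mathfrak{g})^{G}$ is closed in $\mathfrak{X}(B)\oplus C^{\infty}(P,\mathfrak{g})$ with the induced topology. Proposition \ref{proposition isomorphime avec s} ensures that $\Phi$ sends the former into the latter, so $\widetilde{\Phi}$ is the restriction of a continuous linear map between the surrounding Fr\'echet spaces, and therefore continuous.

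Having established continuity and bijectivity, the open mapping theorem for Fr\'echet spaces immediately produces continuity of $\widetilde{\Phi}^{-1}\,,$ completing the proof. No real obstacle is anticipated: the bijectivity is already in hand, and the only point needing verification, the continuity, is tautological once one notes that each building block of $\Phi$ is a tensorial or submersion-based operation on smooth sections over compact manifolds. If one preferred to avoid the open mapping theorem, one could instead check continuity of the explicit inverse $(Z,f)\mapsto Z^{*}+(\vartheta_{\cdot})_{*_{e}}f$ directly, since the horizontal lift $Z\mapsto Z^{*}$ and the evaluation of fundamental vector fields are again continuous tensorial operations.
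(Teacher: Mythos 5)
Your proposal is correct, and it reaches the same conclusion by a slightly different technical route than the paper. The paper also starts from the bijectivity supplied by Proposition \ref{proposition isomorphime avec s}, but it verifies continuity of $\widetilde{\Phi}$ by invoking the convenient-calculus characterization of smooth curves in spaces of sections (\cite{Kriegl-Michor}, Lemma 30.8): a smooth curve $\alpha$ in $\mathfrak{X}(P,\mu^{P})^{G}$ is sent by $\Phi$ to a smooth curve, hence $\widetilde{\Phi}$ is smooth and in particular continuous, and the inverse is treated ``in a similar way'' by the same curve argument applied to the explicit formula $(Z,f)\mapsto Z^{*}+(\vartheta_{\cdot})_{*_{e}}f\,.$ You instead establish continuity directly from the tensorial nature of the horizontal/vertical projections, the pairing with $\theta\,,$ and the pushforward along the submersion $\pi$ --- which is a perfectly sound and arguably more elementary argument in the $C^{\infty}$-topology over compact manifolds --- and then dispatch the inverse with the open mapping theorem for Fr\'echet spaces. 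The open mapping theorem is legitimately applicable here (continuous linear bijection between Fr\'echet spaces, both being closed subspaces of section spaces), and it buys you the continuity of $\widetilde{\Phi}^{-1}$ for free, whereas the paper's approach requires a second, symmetric verification; on the other hand the paper's explicit inverse makes the ``similarly'' genuinely routine, so neither route has a real advantage. Your closing remark that one could alternatively check the inverse directly is exactly what the paper does.
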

\begin{proof}
	From Proposition \ref{proposition isomorphime avec s}, 
	we know that $\widetilde{\Phi}$ is a bijection. Let us show that 
	$\widetilde{\Phi}$ is continuous. If $\alpha$ is a smooth curve 
	of $\mathfrak{X}(P,\mu^{P})^{G}\,,$ then, according to the 
	characterization of smooth curves in a space of sections 
	(see \cite{Kriegl-Michor}, Lemma 30.8.),  
	and also from the definition of $\Phi$ (see \eqref{definition phi}), 
	it comes out that $\Phi\circ \alpha$ is a smooth curve of 
	$\mathfrak{X}(B,V \mu^{B})\oplus C^{\infty}(P,\mathfrak{g})^{G}\,.$ 
	This implies that $\widetilde{\Phi}$ is smooth, in particular, 
	$\widetilde{\Phi}$ is continuous. In a similar way, one can prove 
	that $\widetilde{\Phi}^{-1}$ is also continuous.
\end{proof}
\begin{remarque}
	It follows from Proposition \ref{proposition identification crochet} 
	and Lemma \ref{lemme la continuite de l isomorphsime} 
	that $\mathfrak{X}(P,\mu^{P})^{G}$ and 
	$\mathfrak{X}(B,V \mu^{B})\oplus C^{\infty}(P,\mathfrak{g})^{G}$ 
	are isomorphic in the category of Fr\'echet Lie algebras.
\end{remarque}
\subsection{The regular dual of $\mathfrak{X}(P,\mu^{P})^{G}$}
	Let $<\,,\,>$ be the scalar product on 
	$\mathfrak{X}(P,\mu^{P})^{G}$ defined as
	\begin{eqnarray}\label{equation def de la metrique}
		{<}X,Y{>}:=\int_{P}\,h^{P}_{x}(X_{x},Y_{x})\cdot \mu^{P}\,,
	\end{eqnarray}
	where $X,Y\in \mathfrak{X}(P,\mu^{P})^{G}\,.$ 
	This scalar product induces a metric on 
	$\mathfrak{X}(B,V \mu^{B})\oplus C^{\infty}(P,\mathfrak{g})^{G}$ 
	via the map $\Phi$ (see Proposition \ref{proposition isomorphime avec s}) :
	\begin{eqnarray*}
		{<}(X,f),(X',f'){>}:=\int_{P}\,h^{P}_{x}(\Phi^{-1}(X,f)_{x},\,
		\Phi^{-1}(X',f')_{x})\cdot\mu^{P}\,,
	\end{eqnarray*}
	where $(X,f),(X',f')\in \mathfrak{X}(B,V \mu^{B})
	\oplus C^{\infty}(P,\mathfrak{g})^{G}\,.$ A more explicit 
	description of this metric can be given using Lemma 
	\ref{metrique sur P} and Proposition \ref{proposition vraiment formule integration} :
	\begin{eqnarray}
		^{}{<}(X,f),(X',f'){>}&=&\int_{P}\,h^{P}_{x}\Big(X_{x}^{*}
			+(\vartheta_{x})_{*_{x}}\,f(x),\,(X')_{x}^{*}
			+(\vartheta_{x})_{*_{x}}\,f'(x)\Big)\cdot\mu^{P}\nonumber\\
		&=&\int_{P}\,(\pi^{*}h^{B})_{x}\Big(X_{x}^{*},(X')_{x}^{*}\Big)
			\cdot\mu^{P}+\int_{P}\,h_{x}^{\mathfrak{g}}(f(x),f'(x))
			\cdot\mu^{P}\nonumber\\
		&=&\int_{P}\,h^{B}_{\pi(x)}\Big(X_{\pi(x)},(X')_{\pi(x)}\Big)
			\cdot\mu^{P}+\int_{P}\,h_{x}^{\mathfrak{g}}
			(f(x),f'(x))\cdot\mu^{P}\nonumber\\
		&=&\int_{B}\,h^{B}_{x}\Big(X_{x},(X')_{x}\Big)\cdot 
			V\mu^{B}+\int_{P}\,h_{x}^{\mathfrak{g}}(f(x),f'(x))
			\cdot\mu^{P}\,.\label{equation la porte vient de s'ouvrir}
	\end{eqnarray}
	Denoting $X^{\flat}:=h^{B}(X,\,.\,)\in \Omega^{1}(B)$ the 
	differential form ``dual'' to $X$ and  $f^{\flat}:=
	h^{\mathfrak{g}}(f,\,.\,)\in C^{\infty}(P,\mathfrak{g}^{*})^{G}=
	\{f\in C^{\infty}(P,\mathfrak{g}^{*})\,\vert\,f\circ \vartheta_{g}=
	\text{Ad}^{*}(g^{-1})\,f\,,\forall g\in G\}\,,$ we can 
	rewrite \eqref{equation la porte vient de s'ouvrir} as :
	\begin{eqnarray}\label{equation l'accouplement bestiale}
		 {<}(X,f),(X',f'){>}=\int_{B}\,X^{\flat}(X')\cdot 
		 V \mu^{B}+\int_{P}\,(f^{\flat}(x),f'(x))\cdot\mu^{P}\,,
	\end{eqnarray}
	where $(\,.\,,\,.\,)$ denotes the pairing between 
	$\mathfrak{g}$ and $\mathfrak{g}^{*}\,.$\\
	Set $A\,:\,\mathfrak{X}(B,V \mu^{B})\oplus C^{\infty}(P,\mathfrak{g})^{G}
	\rightarrow\Big(\mathfrak{X}(B,V \mu^{B})\oplus C^{\infty}
	(P,\mathfrak{g})^{G}\Big)^{*}$ to be the continuous and injective 
	dualisation operator defined as $A\big((X,f)\big):= 
	{<}(X,f)\,,\,.\,{>}$ (``\,$\text{}^{*}$\," being the topological dual)\,.
\begin{definition}\label{definition la premiere}
	We define the regular dual $\Big(\mathfrak{X}(B,V \mu^{B})
	\oplus C^{\infty}(P,\mathfrak{g})^{G}\Big)_{\text{reg}}^{*}$ 
	of $\mathfrak{X}(B,V \mu^{B})\oplus C^{\infty}(P,\mathfrak{g})^{G}$ 
	as the range of the operator $A$ in the full topological dual 
	of $\mathfrak{X}(B,V \mu^{B})\oplus C^{\infty}(P,\mathfrak{g})^{G}\,.$
\end{definition}
\begin{proposition} \label{proposition dual regulier} 
	We have an isomorphism of Fr\'echet spaces 
	\begin{eqnarray}
		\Big(\mathfrak{X}(B,V \mu^{B})\oplus 
		C^{\infty}(P,\mathfrak{g})^{G}\Big)_{\text{reg}}^{*}
		\underset{\cong}{\overset{\displaystyle \Psi}{\longrightarrow}}
		\dfrac{\Omega^{1}(B)}{d\,\Omega^{0}(B)}\oplus
		\,C^{\infty}(P,\mathfrak{g}^{*})^{G}\,,
	\end{eqnarray}
	where $\Psi$ is defined for $(X,f)\in \mathfrak{X}(B,V \mu^{B})
	\oplus C^{\infty}(P,\mathfrak{g})^{G}$ by 
	\begin{eqnarray}
		\Psi\Big(A\big((X,f)\big)\Big):=\Big([X^{\flat}],f^{\flat}\Big)\,.
	\end{eqnarray}
\end{proposition}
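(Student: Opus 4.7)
The plan: since $A$ is injective, it suffices to prove that the composite map
\[
\widehat{\Psi} \;:=\; \Psi \circ A \;:\; \mathfrak{X}(B, V\mu^B) \oplus C^\infty(P,\mathfrak{g})^G \;\longrightarrow\; \dfrac{\Omega^1(B)}{d\,\Omega^0(B)} \oplus C^\infty(P, \mathfrak{g}^*)^G,
\]
defined by $(X,f) \mapsto \bigl([X^\flat],\, f^\flat\bigr)$, is a topological isomorphism of Fr\'echet spaces; the regular dual then inherits its Fr\'echet structure from the left-hand side via $A\,.$ The map splits into two independent summands which I would handle separately.

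For the summand $f \mapsto f^\flat$, pointwise dualisation by the positive-definite smooth fibrewise metric $h^\mathfrak{g}$ yields a bicontinuous $\mathbb{R}$-linear bijection $C^\infty(P,\mathfrak{g}) \to C^\infty(P,\mathfrak{g}^*)$. Property $(iii)$ of Lemma \ref{metrique sur P} translates the $G$-equivariance condition $f\circ\vartheta_g = \text{Ad}(g^{-1})\,f$ into $f^\flat\circ\vartheta_g = \text{Ad}^*(g^{-1})\,f^\flat$, so the bijection restricts to an isomorphism of the $G$-equivariant subspaces; this summand therefore requires nothing beyond material already in the paper.

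The substantive step is to show that $X \mapsto [X^\flat]$ is a topological isomorphism $\mathfrak{X}(B, V\mu^B) \overset{\cong}{\longrightarrow} \Omega^1(B)/d\Omega^0(B)\,.$ For injectivity, if $X \in \mathfrak{X}(B, V\mu^B)$ satisfies $X^\flat = dh$ for some $h \in C^\infty(B)$, then $X = \nabla h$, and since $X$ is divergence-free with respect to $V\mu^B$ one has
\[
|\nabla h|^2 \cdot V\mu^B \;=\; X(h) \cdot V\mu^B \;=\; \mathcal{L}_X(h\,V\mu^B) \;=\; d\bigl(\iota_X(h\,V\mu^B)\bigr),
\]
so $\int_B |\nabla h|^2\, V\mu^B = 0$ by Stokes and $X = 0\,.$ For surjectivity, given $\alpha \in \Omega^1(B)$ set $Y := \alpha^\sharp$; writing $Y = X + \nabla h$ with $X \in \mathfrak{X}(B, V\mu^B)$ is equivalent, after dualising, to $[\alpha] = [X^\flat]$, and amounts to solving the weighted Poisson equation $Lh := \text{div}_{V\mu^B}(\nabla h) = \text{div}_{V\mu^B}(Y)\,.$ Integration by parts with respect to $V\mu^B$ shows that $L$ is formally self-adjoint on $L^2(V\mu^B)$, that its kernel consists of constants (by the same gradient estimate as for injectivity, using connectedness of $B$), and that the right-hand side is $L^2(V\mu^B)$-orthogonal to constants by Stokes.

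The main obstacle, and the only genuinely new analytic input, is this weighted Hodge-type decomposition $\mathfrak{X}(B) = \mathfrak{X}(B, V\mu^B) \oplus \nabla C^\infty(B)$, since Lemma \ref{lemme decomposition de hodge la vraie} only treats the unweighted case. Since $V$ is strictly positive on the compact manifold $B$, the operator $L$ is uniformly elliptic and the classical Fredholm and elliptic-regularity arguments underlying Lemma \ref{lemme decomposition de hodge la vraie} carry over verbatim with $\mu^B$ replaced by $V\mu^B$ and the Laplace--Beltrami operator replaced by $L\,.$ Continuity of $\widehat{\Psi}$ is then immediate from continuity of dualisation and the quotient projection, and continuity of the inverse follows either from elliptic regularity for $L$ applied to the Hodge projection or, once bijectivity is established, from the open mapping theorem for Fr\'echet spaces.
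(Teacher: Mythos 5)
Your proof is correct, and its overall architecture coincides with the paper's: everything reduces to the weighted Helmholtz--Hodge decomposition $\mathfrak{X}(B)=\mathfrak{X}(B,V\mu^{B})\oplus\nabla\,\Omega^{0}(B)\,,$ which furnishes the projection needed to invert $X\mapsto[X^{\flat}]\,,$ the summand $f\mapsto f^{\flat}$ being immediate from Lemma \ref{metrique sur P}$(iii)$. Where you differ is in how that decomposition is obtained. The paper deduces it from its Lemma \ref{lemme hodge ameliore} (the decomposition $\mathfrak{X}(B)=\mathfrak{X}(B,\mu^{B})\oplus V\nabla\,\Omega^{0}(B)$, proved by deforming the operator $p\mapsto (dV)(\nabla p)+V\triangle p$ to the Laplacian through elliptic operators and invoking constancy of the Fredholm index) together with the rescaling identity $\mathfrak{X}(B,V\mu^{B})=(1/V)\,\mathfrak{X}(B,\mu^{B})\,;$ it then defines the projection $\mathbb{P}$ and writes down the inverse $([\alpha],\zeta)\mapsto(\mathbb{P}(\alpha^{\sharp}),\zeta^{\sharp})$ explicitly. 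You instead attack the operator $L=\mathrm{div}_{V\mu^{B}}\circ\nabla$ directly, observing that it is formally self-adjoint in $L^{2}(V\mu^{B})$ with kernel the constants and applying the Fredholm alternative; this avoids the homotopy-of-operators argument and is arguably cleaner, though less general than the paper's lemma (which holds for an arbitrary positive weight $f$ and is reused elsewhere). Your direct Stokes-theorem proof of injectivity of $X\mapsto[X^{\flat}]$ and your appeal to the open mapping theorem for Fr\'echet spaces for continuity of the inverse (the paper instead uses the smooth-curve characterization of Lemma \ref{lemme la continuite de l isomorphsime} and the remark that Lemma \ref{lemme frechet pour le quotient} makes the sum topological) are both legitimate shortcuts; note only that the open-mapping route still relies on Lemma \ref{lemme frechet pour le quotient} to know that $\Omega^{1}(B)/d\,\Omega^{0}(B)$ is itself Fr\'echet.
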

	We will show Proposition \ref{proposition dual regulier} using two lemmas. 
	The first lemma is a slight generalization of the Helmholtz-Hodge 
	decomposition (see Lemma \ref{lemme decomposition de hodge la vraie})\,.
\begin{lemma}[Helmholtz-Hodge decomposition]\label{lemme hodge ameliore}
	Let $(M,g)$ be a compact, connected, oriented Riemannian manifold 
	without boundary, endowed with the volume form induced by 
	$g\,.$ for $f\in C^{\infty}(M,\mathbb{R}_{+}^{*})\,,$ 
	we have the following decomposition :
	\begin{eqnarray}\label{equation hodge}
		\mathfrak{X}(M)=\mathfrak{X}(M,\mu)\oplus f\nabla \Omega^{0}(M)\,.
	\end{eqnarray}
\end{lemma}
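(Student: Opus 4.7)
The plan is to reduce the claimed splitting to a solvability question for a weighted second-order elliptic operator. Given $X\in\mathfrak{X}(M)$, writing $X=Y+f\nabla\phi$ with $Y\in\mathfrak{X}(M,\mu)$ and $\phi\in\Omega^0(M)$ is, after applying $\text{div}_\mu$, equivalent to solving
$$L(\phi):=\text{div}_\mu(f\nabla\phi)=\text{div}_\mu(X),$$
so the whole lemma reduces to an analytic study of the operator $L:C^\infty(M,\mathbb{R})\to C^\infty(M,\mathbb{R})$.

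First I would check that $L$ is a linear second-order elliptic operator (its principal symbol is $f\cdot|\xi|_g^2$, which is positive definite since $f>0$) and that it is formally self-adjoint with respect to the $L^2$-pairing induced by $\mu$: integration by parts on the closed manifold $M$ gives
$$\int_M\psi\cdot L(\phi)\,\mu=-\int_M f\cdot g(\nabla\phi,\nabla\psi)\,\mu,$$
which is symmetric in $\phi,\psi$. Using this identity with $\psi=\phi$ together with the strict positivity of $f$ and the connectedness of $M$, the kernel of $L$ is exactly the space of constant functions.

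The next step is the Fredholm alternative for a formally self-adjoint elliptic operator on a compact closed Riemannian manifold, which identifies the image of $L$ with the $L^2$-orthogonal complement of its kernel, namely $C^\infty_0(M,\mathbb{R})$. Since Stokes' theorem gives $\int_M\text{div}_\mu(X)\,\mu=0$, the right-hand side $\text{div}_\mu(X)$ lies in this image, so $L(\phi)=\text{div}_\mu(X)$ admits a solution $\phi$, and setting $Y:=X-f\nabla\phi$ yields the desired expression with $Y\in\mathfrak{X}(M,\mu)$. Directness of the sum then follows immediately: if $f\nabla\phi\in\mathfrak{X}(M,\mu)$ then $L(\phi)=0$, so $\phi$ is constant and $f\nabla\phi=0$.

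The main obstacle is the elliptic step: one must know that $L$ behaves like the standard Laplacian (which corresponds to $f\equiv 1$) in that its kernel is one-dimensional (the constants) and its cokernel, identified with its kernel via self-adjointness, is also one-dimensional. Beyond this analytic input, everything else is a direct adaptation of the proof of Lemma \ref{lemme decomposition de hodge la vraie}. To obtain a \emph{topological} direct sum in the Fréchet category, one invokes the fact that elliptic self-adjoint operators on closed manifolds admit a continuous Green's operator, so the projection $X\mapsto f\nabla\phi$ is continuous from $\mathfrak{X}(M)$ onto $f\nabla\Omega^0(M)$.
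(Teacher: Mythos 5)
Your proof is correct, and the reduction is the same as the paper's: both arguments observe that producing the splitting \eqref{equation hodge} amounts to solving $\text{div}_{\mu}(f\nabla \phi)=\text{div}_{\mu}(X)$ for $\phi\in\Omega^{0}(M)$. Where you genuinely diverge is in how surjectivity of the operator $L(\phi)=(df)(\nabla\phi)+f\triangle\phi$ onto $C^{\infty}_{0}(M,\mathbb{R})$ is obtained. The paper deforms $f$ through a path $\tilde f_{t}$ with $\tilde f_{0}\equiv 1$, uses homotopy invariance of the Fredholm index to transport the known index of $\triangle:C^{\infty}(M,\mathbb{R})\to C^{\infty}_{0}(M,\mathbb{R})$ (namely $1$) to $I_{1}=L$, and combines this with the one-dimensionality of $\ker I_{1}$ to conclude that the cokernel vanishes. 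You instead exploit the formal self-adjointness of $L$ coming from the identity $\int_{M}\psi\, L(\phi)\,\mu=-\int_{M}f\,g(\nabla\phi,\nabla\psi)\,\mu$, identify the cokernel with the kernel via the Fredholm alternative, and read off the kernel from the energy identity (taking $\psi=\phi$ and using $f>0$). Your route buys two things: it avoids having to justify separately that $\ker I_{t}$ is one-dimensional for every $t$ (the paper appeals to the absence of zeroth-order terms, i.e.\ the maximum principle, whereas your energy argument handles this in one line), and it does not require the a priori surjectivity of $\triangle$ as an input. The paper's index argument, on the other hand, is more robust in that it would survive a perturbation destroying self-adjointness. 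Your closing remarks on directness of the sum and on continuity of the projection via the Green's operator are both sound; note that the paper itself only addresses the topological nature of the sum afterwards, via Lemma \ref{lemme frechet pour le quotient}.
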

\begin{proof}
	Let $X\in \mathfrak{X}(M)$ be a vector field and assume 
	that the decomposition \eqref{equation hodge} exists. 
	Thus, we can write $X=X^{\mu}+f \nabla p$ for $X^{\mu}\in
	\mathfrak{X}(M,\mu)\,,$ $p\in \Omega^{0}(M)\,,$ and we have :
	\begin{eqnarray}\label{equation hodge encore}
		\text{div}_{\mu}(X)=\text{div}_{\mu}(f \nabla p)=(df)(\nabla p)+f \triangle p\,.
	\end{eqnarray}
	Let $\tilde{f}\,:\,[0,1]\rightarrow C^{\infty}(M,\mathbb{R}_{+}^{*})$ 
	be a continuous path such that $\tilde{f}_{0}\equiv 1$ and 
	$\tilde{f}_{1}=f\,.$ For $t\in[0,1]\,,$ we also denote 
	$I_{t}\,:\,C^{\infty}(M,\mathbb{R})\rightarrow C_{0}^{\infty}(M,\mathbb{R}):=
	\{h\in C^{\infty}(M,\mathbb{R})\,\vert\,\int_{M}\,h\cdot\mu=0\}$ 
	the operator defined for $p\in C^{\infty}(M,\mathbb{R})\,,$ by 
	$I_{t}(p):=(d\tilde{f}_{t})(\nabla p)+\tilde{f}_{t} \triangle p\,.$ 
	It comes out that $I_{t}$ is a continuous path of elliptic 
	operators (acting on a suitable Sobolev space), and for 
	$t\in [0,1]\,,$ the kernel of $I_{t}$ is 1-dimensional 
	(this comes from the fact that locally, $I_{t}$ is without 
	constant terms, and for that kind of elliptic operators, 
	the kernel reduces to constant functions, see \cite{Jost}). 
	Moreover, it is well known that on a compact orientable Riemannian 
	manifold, the operator $\Delta\,:\,C^{\infty}(M,\mathbb{R})
	\rightarrow C_{0}^{\infty}(M,\mathbb{R})$ is surjective 
	(see for example \cite{Hebey}). Thus $Ind(I_{1})=Ind(I_{0})=1-0=1.$ 
	It follows that $I_{1}$ is surjective and in particular, 
	equation \eqref{equation hodge encore} possesses a unique 
	solution defined modulo a constant. If we take a function $p$ as 
	a solution of \eqref{equation hodge encore}, it is straightforward 
	to check that $X=(X-f \nabla p)+f \nabla p$ is the desired 
	decomposition.
\end{proof}	
	The second lemma concerns the topology we put on the space 
	$\big(\Omega^{1}(B)/d\,\Omega^{0}(B)\big)\oplus
	\,C^{\infty}(P,\mathfrak{g}^{*})^{G}\,.$
\begin{lemma}\label{lemme frechet pour le quotient}
	The space $d\,\Omega^{0}(B)$ is closed in $\Omega^{1}(B)\,.$ 
	In particular, the quotient $\Omega^{1}(B)/d\,\Omega^{0}(B)$ 
	is a Fr\'echet space.
\end{lemma}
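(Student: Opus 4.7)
The plan is to use Hodge theory on the compact oriented Riemannian manifold $(B,h^{B})$ (with $h^{B}$ the metric constructed in Lemma \ref{metrique sur P}) to realize $d\,\Omega^{0}(B)$ as the image of a continuous idempotent acting on $\Omega^{1}(B)\,.$ Once this is in place, closedness will be automatic, and the Fr\'echet structure on the quotient will follow from a standard functional-analytic fact.

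For the first step I would recall the classical Hodge decomposition in degree one. Writing $\delta$ for the codifferential adjoint to $d\,,$ $\Delta := d\delta+\delta d$ for the Hodge Laplacian on $1$-forms, $H$ for the finite-rank projector onto the space $\mathcal{H}^{1}(B)$ of harmonic $1$-forms, and $G$ for the associated Green's operator, one has an identity of continuous operators on the Fr\'echet space $\Omega^{1}(B)\,,$
\begin{eqnarray*}
\textup{id}_{\Omega^{1}(B)}\,=\,H\,+\,d\delta G\,+\,\delta d G\,.
\end{eqnarray*}
Since $G$ commutes with both $d$ and $\delta\,,$ each summand above is a continuous idempotent on $\Omega^{1}(B)\,,$ and in particular $\Pi:= d\delta G$ is a continuous projection.

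Next I would verify that $\textup{im}\,\Pi = d\,\Omega^{0}(B)\,.$ The inclusion from left to right is immediate from the formula $\Pi\alpha = d(\delta G\alpha)\,.$ For the reverse inclusion, given $\alpha=d\beta$ one has $H(d\beta)=0$ by orthogonality, while $\delta d G(d\beta)= \delta\, d\,d(G\beta)=0$ since $G$ commutes with $d\,;$ hence $\Pi(d\beta)=d\beta\,.$ Consequently $d\,\Omega^{0}(B)=\ker(\textup{id}_{\Omega^{1}(B)}-\Pi)\,,$ which is closed in $\Omega^{1}(B)$ by continuity of $\Pi\,.$

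The second assertion then follows from the standard functional-analytic fact that the quotient of a Fr\'echet space by a closed linear subspace is again a Fr\'echet space (the quotient topology being defined by the countable family of quotient seminorms). The only ingredient that is not purely formal is the Hodge decomposition at the level of smooth sections, rather than an $L^{2}$ completion; this is classical and available in \cite{de-Rham}, so I do not expect any serious obstacle.
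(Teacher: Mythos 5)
Your argument is correct, but it takes a genuinely different route from the paper's. The paper's proof is much more elementary: given $df_{n}\rightarrow\alpha$ in $\Omega^{1}(B)\,,$ it observes that for every smooth closed curve $c$ the linear functional $\beta\mapsto\int_{c}\beta$ is continuous on $\Omega^{1}(B)\,,$ so $\int_{c}\alpha=\lim_{n}\int_{c}df_{n}=0\,;$ since a $1$-form all of whose periods vanish is exact, $\alpha\in d\,\Omega^{0}(B)\,.$ That argument needs nothing beyond the fundamental theorem of calculus and the period criterion for exactness, but it is special to degree one and to the subspace of exact forms. Your Hodge-theoretic argument requires a heavier input, namely the Hodge decomposition at the level of smooth forms with the Green's operator $G$ continuous in the $C^{\infty}$ Fr\'echet topology (this is indeed in \cite{de-Rham}, via the elliptic estimates and Sobolev embedding), but it buys strictly more: it exhibits $d\,\Omega^{0}(B)$ as the range of a continuous projection, hence as a \emph{topologically complemented} closed subspace, and identifies the quotient $\Omega^{1}(B)/d\,\Omega^{0}(B)$ with $\mathcal{H}^{1}(B)\oplus\delta\,\Omega^{2}(B)\,;$ it also generalizes verbatim to $d\,\Omega^{k-1}(B)$ inside $\Omega^{k}(B)\,.$ One small remark: the idempotency of $\Pi=d\delta G$ is not actually needed --- the two facts you verify, that $\Pi$ takes values in $d\,\Omega^{0}(B)$ and fixes every exact form, already give $d\,\Omega^{0}(B)=\ker\big(\textup{id}_{\Omega^{1}(B)}-\Pi\big)\,,$ which is all that closedness requires. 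The final functional-analytic step (a quotient of a Fr\'echet space by a closed subspace is Fr\'echet) is the same in both proofs.
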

\begin{proof}
	Let $(f_{n})_{n\in \mathbb{N}}$ be a sequence of 
	$\Omega^{0}(B)$ such that $df_{n}\rightarrow \alpha 
	\in \Omega^{1}(B)$ for $\alpha \in \Omega^{1}(B)\,.$ 
	We have to show that the form $\alpha$ is exact. For that, it is sufficient 
	to show that the integral of $\alpha$ on any smooth closed 
	curve of $B$ is zero.\\ Let $c\,:\,S^{1}\rightarrow B$ be a smooth 
	closed curve of $B\,.$ From the continuity of integration on 
	$\Omega^{1}(B)\,,$ it follows that :
	\begin{eqnarray}
		\int_{c}\,\alpha=\int_{c}\,\underset{n\rightarrow\infty}{\text{lim}}(df_{n})=
		\underset{n\rightarrow\infty}{\text{lim}}\,\int_{c}\,df_{n}=0\,.\nonumber
	\end{eqnarray}
	This proves the lemma.
\end{proof}
	The set $C^{\infty}(P,\mathfrak{g}^{*})^{G}$ being closed in the 
	Fr\'echet space $C^{\infty}(P,\mathfrak{g}^{*})\,,$ it follows 
	that $C^{\infty}(P,\mathfrak{g}^{*})^{G}$ is naturally a Fr\'echet space and, 
	according to Lemma \ref{lemme frechet pour le quotient}, 
	the direct sum $\big(\Omega^{1}(B)/d\,\Omega^{0}(B)\big)\oplus
	\,C^{\infty}(P,\mathfrak{g}^{*})^{G}$ is a Fr\'echet space.
\begin{remarque}
	Lemma \ref{lemme frechet pour le quotient} implies that the 
	sum \eqref{equation hodge} is a topological sum.
\end{remarque}
\begin{proof}[Proof of Proposition \ref{proposition dual regulier}]
	We will explicitly construct an inverse of $\Psi\circ A\,.$ 
	First, observe that the relations $\mathfrak{X}(B,V\mu^{B})=(1/V)\,
	\mathfrak{X}(B,\mu^{B})$ and $\mathfrak{X}(B)=\mathfrak{X}(B,\mu^{B})
	\oplus V\nabla \Omega^{0}(B)$ (see Lemma \ref{lemme hodge ameliore}) 
	imply the decomposition $\mathfrak{X}(B)=\mathfrak{X}(B,V\mu^{B})\oplus 
	\nabla \Omega^{0}(B)\,.$ With respect to this decomposition, 
	we define $\mathbb{P}\,:\,\mathfrak{X}(B)\rightarrow\mathfrak{X}
	(B,V\mu^{B})\,,$ the associated projection. One can check that 
	the map $\Big(\Omega^{1}(B)/d\Omega^{0}(B)\Big)\oplus 
	C^{\infty}(P,\mathfrak{g}^{*})^{G}\rightarrow \mathfrak{X}(B,V\mu^{B})
	\oplus C^{\infty}(P,\mathfrak{g})^{G}\,,([\alpha],\zeta)\mapsto 
	(\mathbb{P}(\alpha^{\sharp}),\zeta^{\sharp})$ is the inverse of 
	$\Psi\circ A$ (``\,$\text{}^{\sharp}\,"$ denotes the inverse of 
	the dualisation operator ``\,$\text{}^{\flat}\,"$)\,.\\
	For the continuity of $\Psi\circ A$ and its inverse, one can use 
	arguments similar to those we used in 
	Lemma \ref{lemme la continuite de l isomorphsime}\,.
\end{proof}
\begin{remarque}\label{remarque identification et accouplement}
	Since the two vector spaces $\Big(\mathfrak{X}(B,V \mu^{B})\oplus 
	C^{\infty}(P,\mathfrak{g})^{G}\Big)_{\text{reg}}^{*}$ and 
	$\Big(\Omega^{1}(B)/d\,\Omega^{0}(B)\Big)\oplus\,C^{\infty}
	(P,\mathfrak{g}^{*})^{G}$ are linearly isomorphic via 
	$\Psi\,,$ it follows that the spaces $\mathfrak{X}(B,V \mu^{B})\oplus C^{\infty}
	(P,\mathfrak{g})^{G}$ and $\Big(\Omega^{1}(B)/d\,\Omega^{0}(B)\Big)
	\oplus\,C^{\infty}(P,\mathfrak{g}^{*})^{G}$ are naturally in 
	duality, the pairing, according to 
	\eqref{equation l'accouplement bestiale}, being :
	\begin{eqnarray}
		\Big(([\alpha],\xi),\,(X,f)\Big):=
		\int_{B}\,\alpha(X)\cdot V\mu^{B}+\int_{P}\,(\xi,f)\cdot\mu^{P}\,,
	\end{eqnarray}
	for $\alpha\in \Omega^{1}(B)\,,$ $\xi\in C^{\infty}
	(P,\mathfrak{g}^{*})^{G}\,,$ $X\in \mathfrak{X}(B,V \mu^{B})$ 
	and $f\in C^{\infty}(P,\mathfrak{g})^{G}\,.$
\end{remarque}
\subsection{Determination of the Euler equation}\label{puoette puoette!!}
	With the above identifications of Fr\'echet spaces, namely  
	$\text{}$\,$\mathfrak{X}(P,\mu^{P})^{G}\cong\mathfrak{X}(B,V \mu^{B})
	\oplus C^{\infty}(P,\mathfrak{g})^{G}$ and $\big(\mathfrak{X}
	(P,\mu^{P})^{G}\big)_{reg}^{*}$ 
	$\cong\Big(\Omega^{1}(B)/d\,\Omega^{0}(B)\Big)\oplus\,
	C^{\infty}(P,\mathfrak{g}^{*})^{G}\,,$ we can give a geometrical 
	description of the map $ad^{*}$ associated to the Lie 
	algebra $\mathfrak{X}(P,\mu^{P})^{G}\,.$ 
\begin{proposition}\label{proposition formule adjoint}
	For $X\in \mathfrak{X}(B,V\mu^{B})\,,$ $f\in C^{\infty}
	(P,\mathfrak{g})^{G}\,,$ $\alpha\in \Omega^{1}(B)$ and 
	$\xi\in C^{\infty}(P,\mathfrak{g}^{*})^{G}\,,$ we have
	\begin{eqnarray}
		\text{ad}^{\,*}(X,f)([\alpha],\xi)=
		\Big(\big[-\mathcal{L}_{X}\alpha-\widetilde{(\xi,df)}+
		\widetilde{(\xi,i_{X^{*}}\Omega)}\big]\,,\,
		-\text{ad}^{\,*}(f)\,\xi-X^{*}(\xi)\Big)\,,\label{equation formule adjoint}
	\end{eqnarray}
	where $\widetilde{(\xi,df)}$ and $\widetilde{(\xi,i_{X^{*}}\Omega)}$ 
	are two 1-forms of $B$ defined for $b\in B\,,$ $Z\in \mathfrak{X}(B)$ 
	and $x\in P$ such that $\pi(x)=b\,,$ by :
	\begin{eqnarray}
		\widetilde{(\xi,df)}_{b}(Z_{b}):=\Big(\xi(x),\,(df)_{x}Z^{*}_{x}\Big)
		\label{equation def forme 1}\,\,\,\,\,\,\,\,\,\,\,
		\text{and}\,\,\,\,\,\,\,\,\,\,\,
		\widetilde{(\xi,i_{X^{*}}\Omega)}_{b}(Z_{b}):=
		\Big(\xi(x),\,\Omega(X^{*}_{x},Z^{*}_{x})\Big)\label{equation def forme 2}\,.
	\end{eqnarray}
\end{proposition}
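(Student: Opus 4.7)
The plan is to pair $\text{ad}^{*}(X,f)([\alpha],\xi)$ against an arbitrary test element $(Z,g)\in \mathfrak{X}(B,V\mu^{B})\oplus C^{\infty}(P,\mathfrak{g})^{G}$ and read off the answer term by term. By the definition of $\text{ad}^{*}$, the bracket of Proposition \ref{proposition identification crochet}, and the pairing of Remark \ref{remarque identification et accouplement},
\begin{align*}
\big(\text{ad}^{*}(X,f)([\alpha],\xi),(Z,g)\big)
&=-\big(([\alpha],\xi),[(X,f),(Z,g)]\big)\\
&=\int_{B}\alpha([X,Z])\cdot V\mu^{B}+\int_{P}\big(\xi,[f,g]+X^{*}(g)-Z^{*}(f)+\Omega(X^{*},Z^{*})\big)\cdot\mu^{P}.
\end{align*}
The rest of the argument is to integrate each of these five terms by parts and recognize the result as a pairing with $(Z,g)$.

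For $\int_{B}\alpha([X,Z])V\mu^{B}$ I would use $\alpha([X,Z])=X(\alpha(Z))-(\mathcal{L}_{X}\alpha)(Z)$ together with the fact that $X$ is divergence-free for $V\mu^{B}$; Stokes on the closed base kills the $X(\alpha(Z))$ integral and leaves the contribution $-\mathcal{L}_{X}\alpha$ to the first component. The $(\xi,[f,g])$ term is immediate from the pointwise identity $(\xi,[f,g])=-(\text{ad}^{*}(f)\xi,g)$. For $(\xi,X^{*}(g))$ I integrate by parts on $P$, which is legitimate because $X\in\mathfrak{X}(B,V\mu^{B})$ implies $X^{*}\in\mathfrak{X}(P,\mu^{P})^{G}$ by Proposition \ref{proposition isomorphime avec s}, so $X^{*}$ is divergence-free for $\mu^{P}$; this contributes $-X^{*}(\xi)$ to the second component.

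The two remaining pieces $(\xi,-Z^{*}(f))$ and $(\xi,\Omega(X^{*},Z^{*}))$ are $G$-invariant real functions on $P$. Their pointwise values depend on $x\in\pi^{-1}(b)$ only through $b$, so they are the $\pi$-pullbacks of the functions $b\mapsto\widetilde{(\xi,df)}_{b}(Z_{b})$ and $b\mapsto\widetilde{(\xi,i_{X^{*}}\Omega)}_{b}(Z_{b})$ on $B$ (this is precisely the defining identity \eqref{equation def forme 2}). Applying the fiber-integration formula of Proposition \ref{proposition vraiment formule integration} converts their $\mu^{P}$-integrals into the $V\mu^{B}$-integrals $-\int_{B}\widetilde{(\xi,df)}(Z)\,V\mu^{B}$ and $\int_{B}\widetilde{(\xi,i_{X^{*}}\Omega)}(Z)\,V\mu^{B}$, which furnishes the last two summands of \eqref{equation formule adjoint}. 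Summing the five contributions and comparing with Remark \ref{remarque identification et accouplement} yields the stated formula.

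The main technical point I expect to have to check carefully is the well-definedness on $B$ of the auxiliary $1$-forms $\widetilde{(\xi,df)}$ and $\widetilde{(\xi,i_{X^{*}}\Omega)}$, i.e.\ the independence of the value $(\xi(x),(df)_{x}Z^{*}_{x})$ on the choice of lift $x$. This uses the $\text{Ad}^{*}$-equivariance of $\xi$, the $\text{Ad}$-equivariance of $f$, the $G$-invariance of the horizontal lift $Z^{*}$, and the transformation law \eqref{invariance connection} (which yields $\vartheta_{g}^{*}\Omega=\text{Ad}(g^{-1})\Omega$ for the curvature). A minor additional check is that the first component is well-defined in $\Omega^{1}(B)/d\,\Omega^{0}(B)$: replacing $\alpha$ by $\alpha+d\phi$ changes $\mathcal{L}_{X}\alpha$ by the exact form $d(X\phi)$, and the two $\widetilde{(\xi,\cdot)}$-terms are independent of $\alpha$, so the output descends to the quotient. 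Once these equivariance verifications are in place, the proof reduces to a careful bookkeeping of signs.
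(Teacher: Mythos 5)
Your proposal is correct and follows essentially the same route as the paper: pair against a test element, expand via the bracket of Proposition \ref{proposition identification crochet} and the pairing of Remark \ref{remarque identification et accouplement}, integrate each of the five terms by parts (using $\mathcal{L}_{X}(V\mu^{B})=0$ and $\operatorname{div}_{\mu^{P}}X^{*}=0$), and convert the $Z^{*}(f)$ and curvature terms to base integrals via Proposition \ref{proposition vraiment formule integration}. The only cosmetic difference is that the paper handles the first term with the identity $i_{[X,X']}=[\mathcal{L}_{X},i_{X'}]$ rather than your Leibniz-rule-plus-Stokes phrasing, which amounts to the same computation.
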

\begin{remarque}
	One can check that the forms defined in \eqref{equation def forme 1} are well defined.
\end{remarque}
\begin{proof}[Proof of Proposition \ref{proposition formule adjoint}]
	Let $X,X'\in \mathfrak{X}(B,V\mu^{B})$ be vector fields with zero 
	divergence on $B\,,$ $f,f'\in C^{\infty}(P,\mathfrak{g})^{G}\,,$ 
	$\alpha\in \Omega^{1}(B)$ and $\xi\in C^{\infty}(P,\mathfrak{g}^{*})^{G}\,.$ 
	From \eqref{equation le crochet de lie}, 
	\eqref{equation l'accouplement bestiale} and Remark 
	\ref{remarque identification et accouplement}, we have :
	\begin{eqnarray*}
		&&\Big(\text{ad}^{\,*}(X,f)([\alpha],\xi),\,(X',f')\Big)=
			-\Big(([\alpha],\xi),\,\text{ad}(X,f)(X',f')\Big)\\
		&&=\bigg(([\alpha],\xi),\,\Big([X,X'],\,[f,f']+
			X^{*}(f')-(X')^{*}(f)+\Omega(X^{*},(X')^{*})\Big)\bigg)\\
		&&=\int_{B}\,\alpha([X,X'])\cdot V\mu^{B}+
			\int_{P}\,\Big(\xi,\,[f,f']+X^{*}(f')-(X')^{*}(f)+
			\Omega(X^{*},(X')^{*})\Big)\cdot\mu^{P}\,.\\
	\end{eqnarray*}
	We now compute separately each term :
	\begin{eqnarray}
		\bullet&&\,\,\int_{B}\,\alpha([X,X'])\cdot V\mu^{B}=
			\int_{B}\,\alpha\wedge i_{[X,X']}(V\mu^{B})=
			\int_{B}\,\alpha\wedge[\mathcal{L}_{X},i_{X'}](V\mu^{B})\nonumber\\
		&&=\int_{B}\,\alpha\wedge\mathcal{L}_{X}\,i_{X'}(V\mu^{B})-
			\int_{B}\,\alpha\wedge i_{X'}
			\underbrace{\mathcal{L}_{X}(V\mu^{B})}_{=0}=
			-\int_{B}\,(\mathcal{L}_{X}\alpha)(X')\cdot V\mu^{B}\,.\\
		\bullet&&\int_{P}\,(\xi,[f,f'])\cdot\mu^{P}=
			\int_{P}\,(\xi,\text{ad}(f)(f'))\cdot\mu^{P}=
			-\int_{P}\,(\text{ad}^{\,*}(f)\,\xi,f')\cdot\mu^{P}\,.\\
		\bullet&&\int_{P}\,(\xi,X^{*}(f'))\cdot\mu^{P}=
			\int_{P}\,X^{*}(\xi,f')\cdot\mu^{P}
			-\int_{P}\,(X^{*}(\xi),f')\cdot\mu^{P}=-\int_{P}\,(X^{*}(\xi),f')\cdot\mu^{P}\,.\\
		\bullet&&-\int_{P}\,(\xi,(X')^{*}(f))\cdot\mu^{P}=
			-\int_{P}\,(\xi,df\big((X')^{*})\big)\cdot\mu^{P}=
			-\int_{B}\,\widetilde{(\xi,df)}(X')\cdot V\mu^{B}\,.\\
		\bullet&&\int_{P}\,\Big(\xi,\Omega(X^{*},(X')^{*})\Big)\cdot\mu^{P}=
			\int_{P}\,\Big(\xi,(i_{X^{*}}\Omega)((X')^{*})\Big)\cdot\mu^{P}=
			\int_{B}\,\widetilde{(\xi,i_{X^{*}}\Omega)}(X')\cdot V\mu^{B}\,.
	\end{eqnarray}
	Hence,
	\begin{eqnarray*}
		\Big(\text{ad}^{\,*}(X,f)([\alpha],\xi),\,(X',f')\Big)=
			\int_{B}\,\Big(-\mathcal{L}_{X}\alpha-
			\widetilde{(\xi,df)}+\widetilde{(\xi,i_{X^{*}}\Omega)}\Big)(X')
			\cdot V\mu^{B}\,\,\,\,\,\,\,\,\,\,\,\,\,\,\,\,\,\,\,\,\,\,\,\,
			\,\,\,\,\,\,\,\,\,\,\,\,\,\,\,\,\,\,\,\,\,
			\,\,\,\,\,\,\,\,\,\,\,\text{}\\
		+\int_{P}\,\Big(-\text{ad}^{\,*}(f)\,\xi-X^{*}
			(\xi),f'\Big)\cdot\mu^{P}
			=\bigg(\Big(\big[{-}\mathcal{L}_{X}\alpha{-}\widetilde{(\xi,df)}{+}
			\widetilde{(\xi,i_{X^{*}}\Omega)}\big],
			-\text{ad}^{\,*}(f)\,\xi{-}X^{*}(\xi)\Big),(X',f')\bigg)\,.\,\,\,\,\,\text{}
	\end{eqnarray*}
	The proposition follows.
\end{proof}
\begin{theoreme}\label{theorem equation d'euler}
	The Euler equation of the group $\text{SAut}(P,\mu^{P})$ on 
	the regular dual of $\mathfrak{X}(P,\mu^{P})^{G}$ can be written :
	\begin{eqnarray}\label{equations d'euler enfin!!!!!!!!}
		\left\lbrace
		\begin{array}{ccc}
			\dfrac{d}{dt}[\alpha]&=&\big[-\mathcal{L}_{X}\alpha-
			\widetilde{(\xi,df)}+\widetilde{(\xi,i_{X^{*}}\Omega)}\big]\,\,,\\
			\dfrac{d}{dt}\xi&=&-\text{ad}^{\,*}(f)\,\xi-X^{*}(\xi)\,,
		\end{array}
		\right.
	\end{eqnarray}
	where $X\in\mathfrak{X}(B,V\mu^{B})\,,$ $\alpha\in 
	\Omega^{1}(B)\,,$ $f\in C^{\infty}(P,\mathfrak{g})^{G}\,,$ 
	$\xi\in C^{\infty}(P,\mathfrak{g}^{*})^{G}$ 
	(these quantities being time-dependant) and where 
	$$
		\left\lbrace
		\begin{array}{cc}
			f^{\flat}=\xi\,,\,\,\text{i.e.,}\,\,\,\xi(x):=
			h^{\mathfrak{g}}_{x}(f(x),\,.\,)\,\,\,\text{for}\,\,\,x\in P\,;\\
			\text{}[X^{\flat}]=[\alpha]\,\,\,\text{where}
			\,\,\,X^{\flat}_{x}:=h^{B}_{x}(X_{x},\,.\,)\,\,\,\text{for}\,\,\,x\in B\,.
		\end{array}
		\right.
	$$
\end{theoreme}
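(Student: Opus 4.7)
The plan is to recognize this theorem as essentially an unwinding of the definition of the Euler equation (equation \eqref{definition equation poisson}) using the explicit computation of $\text{ad}^{*}$ already carried out in Proposition \ref{proposition formule adjoint}, together with the identification of the regular dual given in Proposition \ref{proposition dual regulier}. No new analytic input is needed beyond what has been established.

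First I would set $\mathfrak{g}:=\mathfrak{X}(P,\mu^{P})^{G}$ with its $L^{2}$-metric $\langle\,,\,\rangle$ from \eqref{equation def de la metrique}, and use the Fr\'echet Lie algebra isomorphism $\widetilde{\Phi}$ of Lemma \ref{lemme la continuite de l isomorphsime} to transport everything to $\mathfrak{X}(B,V\mu^{B})\oplus C^{\infty}(P,\mathfrak{g})^{G}$. Next I would use Proposition \ref{proposition dual regulier} to identify the regular dual with $\bigl(\Omega^{1}(B)/d\Omega^{0}(B)\bigr)\oplus C^{\infty}(P,\mathfrak{g}^{*})^{G}$, under which the sharp operator $\sharp$ sends a class $([\alpha],\xi)$ to the unique pair $(X,f)\in\mathfrak{X}(B,V\mu^{B})\oplus C^{\infty}(P,\mathfrak{g})^{G}$ with $[X^{\flat}]=[\alpha]$ and $f^{\flat}=\xi$; this is exactly the side condition appearing in the statement of the theorem.

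With these identifications fixed, the abstract Euler equation $\frac{d}{dt}\eta=\text{ad}^{*}(\eta^{\sharp})\eta$ applied to the time-dependent curve $\eta(t)=([\alpha(t)],\xi(t))$ becomes
\begin{eqnarray*}
\frac{d}{dt}\bigl([\alpha],\xi\bigr)=\text{ad}^{*}(X,f)\bigl([\alpha],\xi\bigr),
\end{eqnarray*}
and I would then simply plug in the right-hand side of formula \eqref{equation formule adjoint} from Proposition \ref{proposition formule adjoint}. Splitting the resulting equality into its $\Omega^{1}(B)/d\Omega^{0}(B)$-component and its $C^{\infty}(P,\mathfrak{g}^{*})^{G}$-component yields exactly the coupled system \eqref{equations d'euler enfin!!!!!!!!}.

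I do not anticipate a real obstacle: the content of the theorem has been done in the preceding propositions. The only item that requires a brief comment is checking that the range of $\text{ad}^{*}(X,f)$ actually lies inside the regular dual (so that the Euler equation is well-defined in the sense of the preceding definition); but this is immediate from Proposition \ref{proposition formule adjoint}, since the right-hand side of \eqref{equation formule adjoint} manifestly belongs to $\bigl(\Omega^{1}(B)/d\Omega^{0}(B)\bigr)\oplus C^{\infty}(P,\mathfrak{g}^{*})^{G}$, which under $\Psi^{-1}$ is precisely the regular dual. Hence the theorem reduces to a direct substitution, and the main work has already been performed in the computation of the coadjoint action.
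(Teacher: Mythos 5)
Your proposal is correct and matches the paper exactly: the paper gives no separate proof of Theorem \ref{theorem equation d'euler}, precisely because it is the immediate consequence of Definition \eqref{definition equation poisson}, the identification of the regular dual in Proposition \ref{proposition dual regulier}, and the formula for $\text{ad}^{*}$ in Proposition \ref{proposition formule adjoint} that you describe. Your added remark that the right-hand side of \eqref{equation formule adjoint} visibly lands in $\bigl(\Omega^{1}(B)/d\Omega^{0}(B)\bigr)\oplus C^{\infty}(P,\mathfrak{g}^{*})^{G}\cong\bigl(\mathfrak{X}(B,V\mu^{B})\oplus C^{\infty}(P,\mathfrak{g})^{G}\bigr)_{\text{reg}}^{*}$, so that the Euler equation is well defined, is a point the paper leaves implicit.
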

\begin{remarque}
	According to Remark \ref{remarque sur le lien EUler geodesiques}, 
	equations \eqref{equations d'euler enfin!!!!!!!!} describes --at 
	least formally-- geodesics in 
	$\textup{SDiff}(P,\mu^{P})$ with respect to the natural $L^{2}$-metric; 
	a smooth curve $\varphi$ in $\textup{SDiff}(P,\mu^{P})$ is (formally) a geodesic in 
	$\textup{SDiff}(P,\mu^{P})$ if and only if the curve
	\begin{eqnarray}
		(\Psi\circ A \circ\Phi)\big((R_{\varphi^{-1}})_{*_{\varphi}}\,\dot{\varphi}\big)=
		(\Psi\circ A \circ \Phi)\big(\dot{\varphi}\circ\varphi^{-1}\big)
	\end{eqnarray}
	is a solution of equation \eqref{equations d'euler enfin!!!!!!!!} (see 
	\eqref{definition phi} and proposition \ref{proposition dual regulier} 
	for the definitions of $\Psi\,,A\,,\Phi$).
\end{remarque}
\begin{remarque}
	If the Euclidean structure $h^{\mathfrak{g}}$ on 
	$P\times \mathfrak{g}$ is constant (i.e. independent of the fibers), then :
	\begin{description}
		\item[$\bullet$] $\widetilde{(\xi,df)}=\dfrac{1}{2} 
			d\Big(\Vert f\Vert^{2}\Big)\,,$ thus 
			$[\widetilde{(\xi,df)}]=0\,,$
		\item[$\bullet$] the function $V\in C^{\infty}(B,\mathbb{R}_{+}^{*})$ 
			is constant and $\mathfrak{X}(B,V\mu^{B})=
			\mathfrak{X}(B,\mu^{B})\,.$
	\end{description}
\end{remarque}
\begin{remarque}
	If the Euclidean structure $h^{\mathfrak{g}}$ on 
	$P\times \mathfrak{g}$ is constant and if the curvature 
	$\Omega$ of the bundle $G\hookrightarrow P\rightarrow B$ 
	vanishes, then the first equation of 
	(\ref{equations d'euler enfin!!!!!!!!}) reduces to the autonomous equation :
	\begin{eqnarray}
		\dfrac{d}{dt}[\alpha]&=&\big[-\mathcal{L}_{X}\alpha\big]\,.
	\end{eqnarray}
	In this case, system (\ref{equations d'euler enfin!!!!!!!!}) 
	models the passive motion
	in ideal hydrodynamical flow (see \cite{Vizman-geodesics}, \cite{Hattori})\,.
\end{remarque}
\begin{remarque}
	Using the formula $\mathcal{L}_{X}(X^{\flat})=
	(\nabla_{X}X)^{\flat}+\frac{1}{2}d(h^{B}(X,X))$ for 
	$X\in \mathfrak{X}(B)$ (see \cite{Arnold-Khesin}), 
	we can rewrite the first equation of 
	(\ref{equations d'euler enfin!!!!!!!!}) as :
	\begin{eqnarray}
		\dfrac{d}{dt} X=-\nabla_{X}X-\widetilde{(\xi,df)}^{\sharp}+
		\widetilde{(\xi,i_{X^{*}}\Omega)}^{\sharp}+\nabla p\,,
	\end{eqnarray}
	where $p\in C^{\infty}(B,\mathbb{R})$ is determined by the 
	condition $\text{div}_{\,V\mu^{B}}(X)=0\,.$
\end{remarque}
	If we specialize to the case of a $S^{1}$-principal bundle 
	with a 3-dimensional base manifold, and if $h^{\mathfrak{g}}$ 
	is given by the formula $h^{\mathfrak{g}}_{x}(\rho,\varrho):=\rho\varrho$
	for $x\in P$ and $\rho,\varrho\in \mathbb{R}$ (we identify the 
	Lie algebra of $S^{1}$ with $\mathbb{R}$), then :
	\begin{description}
		\item[$\bullet$] the curvature $\Omega$ projects itself on a 2-form 
			$\widetilde{\Omega}\in \Omega^{2}(B)\,.$ 
			Similarly, any function 
			$f\in C^{\infty}(P,\mathbb{R})^{S^{1}}$ projects itself on a function 
			$\tilde{f}\in C^{\infty}(B,\mathbb{R})\,.$
		\item[$\bullet$] One can define a vector field 
			$\mathfrak{B}\in \mathfrak{X}(B,\mu^{B})$ via the 
			relation $i_{\mathfrak{B}}\,\mu^{B}=\widetilde{\Omega}\,,$ 
		\item[$\bullet$] we have the formula $X\times \mathfrak{B}=
			\big(i_{X}\,\widetilde{\Omega}\big)^{\sharp}$ for all vector 
			fields $X\in \mathfrak{X}(B)\,.$
	\end{description}
	In these conditions, it is easy to see that 
	(\ref{equations d'euler enfin!!!!!!!!}) is equivalent to :
	\begin{eqnarray}\label{equation magneto hydo...}
		\left\lbrace
		\begin{array}{ccc}
			X=-\nabla_{X}X+\tilde{f}\,X\times \mathfrak{B}+\nabla p\,,\\
			\dfrac{d}{dt}\tilde{f}=-X(\tilde{f})\,.
		\end{array}
		\right.
	\end{eqnarray}
	These equations, known as the ``superconductivity
	equations'', models the motion of an ideal charged fluid in a given 
	magnetic field $\mathfrak{B}$ where $X$
	represents the velocity field and $\tilde{f}$ the charge 
	density (see \cite{Vizman})\,.
\begin{remarque}
	The appearance of the magnetic term $\mathfrak{B}$ in 
	\eqref{equation magneto hydo...} is not surprising since classical 
	electromagnetism is described in the language of gauge theories, 
	where electromagnetic field is interpreted as the curvature of a 
	connection form on a $S^{1}-$principal bundle. 
\end{remarque}
\begin{remarque}
	If the Euclidean structure $h^{\mathfrak{g}}$ on 
	$P\times \mathfrak{g}$ is constant, then the metric $h^{P}$ 
	turns out to be a Kaluza-Klein metric on $P$ 
	(see formula (2.5) of \cite{Gay-Balmaz-Ratiu}) and 
	\eqref{equations d'euler enfin!!!!!!!!} becomes a particular case of 
	the Euler-Yang-Mills equations of an incompressible homogeneous 
	Yang-Mills ideal fluid (compare with formula (5.23) in 
	\cite{Gay-Balmaz-Ratiu}). The absence of an electric term in 
	\eqref{equations d'euler enfin!!!!!!!!} seems to be due to the fact 
	that the connection $\theta$ is not a dynamical variable in our 
	framework. This is not surprising since in the Yang-Mills 
	formulation of electromagnetism, the configuration space is the 
	space of all connections of the principal bundle describing the physical system.    
\end{remarque}

\section{The group $\text{SAut}\,(P,\mu^{P})$ as the total
	space of a $\text{Gau}(P)$-principal bundle}
	\label{chapitre 2 partie 4}
\subsection{The principal fiber bundle structure of 
	$\text{SAut}\,(P,\mu^{P})$}\label{chapitre 2 section 2.4.1}
	For $\varphi\in \text{Aut}(P)\,,$ we denote by 
	$\widetilde{\varphi}\in \text{Diff}(B)$ the unique 
	diffeomorphism of $B$ satisfying :
	\begin{eqnarray}\label{equation le projete d'un diffeo}
		 \widetilde{\varphi}\circ\pi=\pi\circ \varphi\,\,.
	\end{eqnarray}
	Note that the map $\overline{p}\,:\,\text{Aut}(P)\rightarrow 
	\text{Diff}(B)\,,\varphi\rightarrow\widetilde{\varphi}$ 
	is a group morphism\,.
\begin{proposition}\label{proposition le secret du fibre principal}
	An automorphism $\varphi\in \text{Aut}(P)$ belongs to 
	$\text{SAut}\,(P,\mu^{P})$ if and only if 
	$\widetilde{\varphi}\in \text{SDiff}\,(B,V\mu^{B})\,.$
\end{proposition}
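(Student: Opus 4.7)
The strategy is to use the integration formula of Proposition \ref{proposition vraiment formule integration} to translate the condition $\varphi^{*}\mu^{P}=\mu^{P}$ from $P$ down to $B$. The starting observation is that since $\varphi\in\textup{Aut}(P)$ is $G$-equivariant and $\mu^{P}$ is $G$-invariant, the form $\varphi^{*}\mu^{P}$ is itself $G$-invariant, so there is a unique $\tilde H\in C^{\infty}(B,\mathbb{R})$ with $\varphi^{*}\mu^{P}=(\tilde H\circ\pi)\cdot\mu^{P}$. Thus $\varphi\in\textup{SAut}(P,\mu^{P})$ is equivalent to $\tilde H\equiv 1$, and the problem reduces to showing that $\tilde H\equiv 1$ if and only if $\widetilde{\varphi}^{*}(V\mu^{B})=V\mu^{B}$.

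The key step is to compute $\int_{P}(f\circ\pi)\cdot\varphi^{*}\mu^{P}$ in two ways for an arbitrary test function $f\in C^{\infty}(B,\mathbb{R})$. On one hand, using $\varphi^{*}\mu^{P}=(\tilde H\circ\pi)\mu^{P}$ and Proposition \ref{proposition vraiment formule integration},
\[
\int_{P}(f\circ\pi)\cdot\varphi^{*}\mu^{P}=\int_{B}f\tilde H\cdot V\mu^{B}.
\]
On the other hand, change-of-variables for $\varphi$ together with $\pi\circ\varphi=\widetilde{\varphi}\circ\pi$, followed by another application of Proposition \ref{proposition vraiment formule integration}, yield
\[
\int_{P}(f\circ\pi)\cdot\varphi^{*}\mu^{P}=\int_{P}(f\circ\widetilde{\varphi}^{-1}\circ\pi)\cdot\mu^{P}=\int_{B}(f\circ\widetilde{\varphi}^{-1})\cdot V\mu^{B}.
\]
Equating these gives the master identity $\int_{B}f\tilde H\cdot V\mu^{B}=\int_{B}(f\circ\widetilde{\varphi}^{-1})\cdot V\mu^{B}$ for every $f$. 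If $\tilde H\equiv 1$, the identity is precisely the integrated form of $\widetilde{\varphi}^{*}(V\mu^{B})=V\mu^{B}$; conversely, if $\widetilde{\varphi}^{*}(V\mu^{B})=V\mu^{B}$, the right-hand side equals $\int_{B}f\cdot V\mu^{B}$, so $\int_{B}f(\tilde H-1)V\mu^{B}=0$ for all $f$, forcing $\tilde H\equiv 1$ since $V>0$.

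The only technicality is justifying the change-of-variables step, which requires $\varphi$ to preserve orientation. In the ``$\Rightarrow$'' direction this is automatic from $\varphi^{*}\mu^{P}=\mu^{P}$. In the ``$\Leftarrow$'' direction it follows from the fact that $\widetilde{\varphi}$ preserves the orientation of $B$ (because $V>0$), combined with the decomposition $\mu^{P}=\pi^{*}(V\mu^{B})\wedge\theta^{*}\nu_{e}^{G}$ from Proposition \ref{proposition formule integration}, the $G$-equivariance of $\varphi$, and the connectedness of $G$ (which ensures that $\varphi$ acts orientation-preservingly on each fiber). No step presents a real obstacle; the proof is essentially a bookkeeping exercise on top of Proposition \ref{proposition vraiment formule integration}.
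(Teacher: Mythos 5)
Your argument is correct, but it is genuinely different from the one in the paper. The paper works pointwise: starting from the decomposition $\mu^{P}=\pi^{*}(V\mu^{B})\wedge\theta^{*}\nu_{e}^{G}$ of Proposition \ref{proposition formule integration}, it pulls back each factor separately, uses that $\varphi^{*}\theta$ is again a connection form (so $\varphi^{*}(\theta^{*}\nu_{e}^{G})$ agrees with $\theta^{*}\nu_{e}^{G}$ on vertical vectors) together with Lemma \ref{lemme pas assez de place}, and arrives at the explicit identity $\varphi^{*}\mu^{P}=\frac{(V\circ\widetilde{\varphi})\cdot f^{\varphi}}{V}\circ\pi\cdot\mu^{P}$, from which the equivalence is read off algebraically, with no integration and no orientation bookkeeping. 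You instead run a global, distributional argument: you characterize the $G$-invariant density $\tilde H$ by pairing against all test functions $f\circ\pi$ and use the integration formula of Proposition \ref{proposition vraiment formule integration} twice, once directly and once after a change of variables. Both routes are valid. The paper's computation buys the stronger pointwise formula for $\varphi^{*}\mu^{P}$ for free and sidesteps the orientation question entirely; your route is softer and uses the integration formula as a black box, but it must pay for the change of variables with the orientation discussion you sketch at the end (that $\varphi$ preserves the orientation of $P$ because $\widetilde{\varphi}$ preserves that of $B$ and the fiber maps, being conjugate to translations in the connected group $G$, preserve the canonical fiber orientations; the same point is also needed, and is handled by the same remark, when you convert $\int_{B}(f\circ\widetilde{\varphi}^{-1})\,V\mu^{B}=\int_{B}f\,V\mu^{B}$ into $\widetilde{\varphi}^{*}(V\mu^{B})=V\mu^{B}$ in the forward direction). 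That discussion is terse but correct, so I consider the proof complete.
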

\begin{proof} 
	From \eqref{equation formule forme volume}\,, we have :
	\begin{eqnarray}
		\varphi^{*}\mu^{P}&=&\varphi^{*}\Big((V\circ\pi)\cdot\pi^{*}
		\mu^{B}\wedge\theta^{*}\,\nu_{e}^{G}\Big)=
		(V\circ\pi\circ\varphi)\cdot\Big(\varphi^{*}
		\pi^{*}\mu^{B}\Big)\wedge\Big(\varphi^{*}\theta^{*}
		\nu^{G}_{e}\Big)\,.
	\end{eqnarray}
	For $\varphi\in \text{Aut}(P)\,,$ we write 
	$f^{\varphi}\in C^{\infty}(B,\mathbb{R}^{*})$ the unique 
	function determined by the relation 
	$\widetilde{\varphi}^{*}\mu^{B}=f^{\varphi}\cdot\mu^{B}\,.$ We then have :
	\begin{eqnarray}
		(V\circ\pi\circ\varphi)\cdot\Big(\varphi^{*}\pi^{*}\mu^{B}\Big)&=&
			(V\circ\widetilde{\varphi}\circ\pi)\cdot\big(\pi\circ\varphi\big)^{*}
			\mu^{B}=(V\circ\widetilde{\varphi}\circ\pi)\cdot
			\big(\widetilde{\varphi}\circ\pi\big)^{*}\mu^{B}\nonumber\\
		&=&(V\circ\widetilde{\varphi}\circ\pi)
			\cdot\pi^{*}\widetilde{\varphi}^{*}\mu^{B}=
			(V\circ\widetilde{\varphi}\circ\pi)\cdot\pi^{*}
			\big(f^{\varphi}\cdot\mu^{B}\big)\nonumber\\
		&=&(V\circ\widetilde{\varphi}\circ\pi)\cdot 
			(f^{\varphi}\circ \pi)\cdot\pi^{*}\mu^{B}\label{equation lalalalalalalalal}\,.
	\end{eqnarray}
	On the other hand, for $x\in P\,,$ and for vertical tangent 
	vectors $u_{1},...,u_{m}\in T_{x}P$  
	(we assume $\text{dim}(G)=m$), we have :
	\begin{eqnarray}
		\Big(\varphi^{*}(\theta^{*}\nu^{G}_{e})\Big)_{x}(u_{1},...,u_{m})&=&
			(\theta^{*}\nu^{G}_{e})_{\varphi(x)}
			(\varphi_{*_{x}}u_{1},...,\varphi_{*_{x}}u_{m})\nonumber\\
		&=&(\nu_{e}^{G})\Big(\theta_{\varphi(x)}(\varphi_{*_{x}}u_{1}),...,
			\theta_{\varphi(x)}(\varphi_{*_{x}}u_{m})\Big)\nonumber\\
		&=&(\nu_{e}^{G})\Big((\varphi^{*}\theta)_{x}(u_{1}),...,
			(\varphi^{*}\theta)_{x}(u_{m})\Big)\,.\nonumber
	\end{eqnarray}
	The diffeomorphism $\varphi\,$ being $G$-equivariant, one can 
	show that $\varphi^{*}\theta\,$ is a connection form. 
	In particular, $u_{i}$ being vertical, 
	$(\varphi^{*}\theta)_{x}(u_{i})=
	\theta_{x}(u_{i})$ for $i\in \{1,...,m\}\,,$ and also,
	\begin{eqnarray}
		\Big(\varphi^{*}(\theta^{*}\nu^{G}_{e})\Big)_{x}(u_{1},...,u_{m})&=&
			(\nu_{e}^{G})\Big(\theta_{x}(u_{1}),...,
			\theta_{x}(u_{m})\Big)=
			(\theta^{*}\nu^{G}_{e})_{x}(u_{1},...,u_{m})\label{equation dfljdgjojlfh}\,.
	\end{eqnarray}
	From Lemma \ref{lemme pas assez de place}, \eqref{equation dfljdgjojlfh} 
	and \eqref{equation lalalalalalalalal}\,, we get
	\begin{eqnarray*}
		\varphi^{*}\mu^{P}=(V\circ\widetilde{\varphi}\circ \pi)
		\cdot (f^{\varphi}\circ\pi)\cdot\pi^{*}\mu^{B}\wedge
		\theta^{*}\nu_{e}^{G}=\dfrac{V\circ \widetilde{\varphi}
		\circ\pi\cdot f^{\varphi}\circ\pi}{V\circ\pi}\cdot\mu^{P}\,.
	\end{eqnarray*}
	Thus,
	\begin{eqnarray*}
		\varphi^{*}\mu^{P}=\mu^{P}&\Leftrightarrow&
			\dfrac{V\circ \widetilde{\varphi}\circ\pi\cdot 
			f^{\varphi}\circ\pi}{V\circ\pi}=1\,\,\Leftrightarrow\,\, 
			f^{\varphi}\circ\pi=\bigg(\dfrac{V}{V\circ\widetilde{\varphi}}\bigg)
			\circ\pi\,\,\Leftrightarrow 
			\,\,f^{\varphi}=\dfrac{V}{V\circ\widetilde{\varphi}}\\
		&\Leftrightarrow& \widetilde{\varphi}^{*}\mu^{B}=
			\dfrac{V}{V\circ\widetilde{\varphi}}\cdot\mu^{B}\,\,
			\Leftrightarrow\,\,\widetilde{\varphi}^{*}(V\mu^{B})=V\mu^{B}\,.
	\end{eqnarray*}
	This proves the proposition.
\end{proof}
	Before we show that $\text{SAut}(P,\mu^{P})$ is a 
	$\text{Gau}(P)$-principal fiber bundle, where 
	$\text{Gau}(P):=\{\varphi\in \text{Aut}(P)\,\vert\,\tilde{\varphi}=Id_{B}\}\,,$ 
	we will first prove that $\text{Aut}(P)$ is a $\text{Gau}(P)$-principal 
	fiber bundle and we will see how to use Proposition 
	\ref{proposition le secret du fibre principal} to get a similar 
	result for $\text{SAut}(P,\mu^{P})\,.$\\\\
	Let us recall some basic facts about the group $\text{Gau}(P)$ 
	(see \cite{Kriegl-Michor}, \cite{Michor})\,: 
\begin{proposition}[\cite{Michor}]
	We have :
		\begin{description}
			\item[$(i)$] the group $\text{Gau}(P)=\{\varphi\in \text{Aut}(P)\,
				\vert\,\tilde{\varphi}=Id_{B}\}$ is a closed Fr\'echet 
				Lie subgroup of $\text{Aut}(P)$ whose Lie algebra can be identified 
				with the space of vertical vector fields of $P$ (see Theorem 3.1 and 
				Theorem 3.7 in \cite{Michor}),
			\item[$(ii)$] The set $\{f\in C^{\infty}(P,G)\,\vert\,f\circ \vartheta_{g}=
				c_{g^{-1}}\circ f\,,\forall g\in G\}=:C^{\infty}(P,G)^{G}$ (where 
				$c_{g}\,:\,G\rightarrow G, h\rightarrow ghg^{-1}$), is a closed Fr\'echet 
				Lie subgroup of the current group $C^{\infty}(P,G)$ endowed with the 
				pointwise multiplication (see \cite{Pressley-Segal}), whose Lie algebra 
				can be identified with the Fr\'echet space $C^{\infty}(P,\mathfrak{g})^{G}:
				=\{f\in C^{\infty}(P,\mathfrak{g})\,\big\vert\,f\circ \vartheta_{g}=
				Ad(g^{-1})\,f\,\,,\forall g\in G\}\,,$
			\item[$(iii)$] we have an isomorphism of Fr\'echet Lie groups :
				\begin{eqnarray}
					C^{\infty}(P,G)^{G}\rightarrow \text{Gau}(P)\,,\,\,\,f\mapsto 
					\vartheta_{f(\,.\,)}(\,.\,)\,.\label{definition iso gauge cinfty}
				\end{eqnarray}
		\end{description}
\begin{remarque}
	Note that the above proposition is expressed in the category of Fr\'echet Lie groups, and not 
	in the category of tame Fr\'echet Lie groups of Hamilton (see \cite{Michor}). 
	This is not really burdensome since, 
	in the rest of this paper, we will not have to use the inverse function Theorem of Nash-Moser. 
	Consequently, we don't need the subtle category of Hamilton anymore, and the 
	rest of this paper should be --unless otherwise stated-- understood withing the framework of 
	Fr\'echet Lie groups. 
\end{remarque}
\end{proposition}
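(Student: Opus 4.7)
The plan is to anchor everything in part (iii), constructing the explicit isomorphism $\Theta\,:\,C^{\infty}(P,G)^{G}\to\text{Gau}(P)$ defined by $\Theta(f)(x):=\vartheta(x,f(x))$, and then to deduce (i) by transport of structure, handling (ii) in parallel via the standard theory of current groups.

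First I would verify that $\Theta$ is a set-theoretic group isomorphism. Since the $G$-orbits are the fibres of $\pi\,,$ we have $\pi\circ\Theta(f)=\pi\,,$ so $\widetilde{\Theta(f)}=\text{Id}_{B}\,;$ the equivariance condition $f\circ\vartheta_{g}=c_{g^{-1}}\circ f$ translates directly into $G$-equivariance of $\Theta(f)\,,$ using $\vartheta(\vartheta(x,g),h)=\vartheta(x,gh)$ to compute $\Theta(f)(\vartheta_{h}(x))=\vartheta(x,f(x)h)=\vartheta_{h}(\Theta(f)(x))\,.$ Bijectivity is immediate since every $\varphi\in\text{Gau}(P)$ preserves fibres, yielding a unique $f\,:\,P\to G$ with $\varphi(x)=\vartheta(x,f(x))\,,$ and equivariance of $\varphi$ is equivalent to the equivariance of $f\,.$ A direct calculation exploiting the equivariance of $g$ gives the group morphism property $\Theta(f\cdot g)=\Theta(f)\circ\Theta(g)\,.$

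For (ii), I would first invoke the standard result (see \cite{Kriegl-Michor,Pressley-Segal}) that the current group $C^{\infty}(P,G)$ is a Fr\'echet Lie group with Lie algebra $C^{\infty}(P,\mathfrak{g})\,,$ a chart near the identity being provided by the pointwise exponential $\xi\mapsto\exp\circ\,\xi\,.$ The subset $C^{\infty}(P,G)^{G}$ is the intersection over $g\in G$ of the closed sets $\{f\mid f\circ\vartheta_{g}=c_{g^{-1}}\circ f\}\,,$ hence closed, and visibly a subgroup. To exhibit it as a submanifold near the identity, I would use the $\text{Ad}$-equivariance of $\exp\,:\,\mathfrak{g}\to G\,,$ which yields the equivalence $\exp\circ\,\xi\in C^{\infty}(P,G)^{G}\Leftrightarrow\xi\in C^{\infty}(P,\mathfrak{g})^{G}\,.$ Translations propagate the chart, and the Lie algebra identification drops out by differentiation at the constant function $e\,.$

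For (i), I would transport the Fr\'echet Lie group structure from (ii) through $\Theta\,,$ the crucial point being the smoothness of $\Theta$ and $\Theta^{-1}\,.$ Smoothness of $\Theta$ follows from the exponential law of the convenient calculus: $\Theta(f)(x)=\vartheta(x,f(x))$ realises $\Theta$ as the composition of the smooth evaluation $P\times C^{\infty}(P,G)\to G$ with $\vartheta\,;$ smoothness of $\Theta^{-1}$ is obtained locally by trivialising $\pi\,,$ under which $\varphi$ takes the form $(b,g)\mapsto(b,g\cdot f(b,g))$ and $f$ is recovered smoothly. That $\text{Gau}(P)$ is closed in $\text{Aut}(P)$ follows from $\text{Gau}(P)=\ker(\overline{p})$ together with the continuity of $\overline{p}$ (formula \eqref{equation le projete d'un diffeo}). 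Differentiating $t\mapsto\vartheta(x,\exp(t\xi(x)))$ at $t=0$ then identifies $d\Theta|_{e}(\xi)$ with the vertical $G$-invariant vector field $x\mapsto(\vartheta_{x})_{*_{e}}\xi(x)\,,$ pinning down the Lie algebra as the space of vertical vector fields. The main obstacle throughout is precisely this Fr\'echet-smoothness of $\Theta^{\pm 1}$ and of the group operations on $C^{\infty}(P,G)^{G}$: the algebraic content is essentially formal, but the infinite-dimensional differentiability requires the full strength of the mapping space machinery of \cite{Kriegl-Michor} and the arguments of \cite{Michor}, which is why the result is cited rather than reproved.
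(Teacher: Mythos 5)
The paper does not actually prove this proposition: it is quoted verbatim from \cite{Michor} (Theorems 3.1 and 3.7) and \cite{Pressley-Segal}, so there is no internal argument to compare yours against. Your sketch is, in substance, the standard proof of the cited facts and follows the same route as Michor et al.: identify $\text{Gau}(P)$ with the equivariant current group via $\Theta(f)(x)=\vartheta(x,f(x))\,,$ give $C^{\infty}(P,G)^{G}$ a chart through the pointwise exponential, and transport. The algebraic verifications (fibre preservation, equivariance, the homomorphism property $\Theta(fg)=\Theta(f)\circ\Theta(g)\,,$ bijectivity) are all correct as you state them.

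Two points deserve more care. First, in (ii) the equivalence $\exp\circ\,\xi\in C^{\infty}(P,G)^{G}\Leftrightarrow\xi\in C^{\infty}(P,\mathfrak{g})^{G}$ requires the neighbourhood $U\subseteq\mathfrak{g}$ on which $\exp$ is injective to be $\mathrm{Ad}(G)$-invariant (otherwise $\xi\circ\vartheta_{g}$ and $\mathrm{Ad}(g^{-1})\,\xi$ need not both take values where $\exp$ is injective, and you cannot cancel the $\exp$); this is available because $G$ is compact, but it should be said explicitly --- it is the exact analogue of the averaging step in the proof of Proposition \ref{diffG}. Second, and more substantively: claim (i) asserts that $\text{Gau}(P)$ is a Lie \emph{subgroup} of $\text{Aut}(P)\,,$ which in this paper's conventions means a \emph{submanifold}. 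Transporting the structure of $C^{\infty}(P,G)^{G}$ through $\Theta$ makes $\text{Gau}(P)$ a Fr\'echet Lie group in its own right and the inclusion a smooth injective morphism, but it does not by itself produce a chart of $\text{Aut}(P)$ straightening $\text{Gau}(P)$ onto a closed linear subspace. The obvious candidate --- the Riemannian exponential chart $X\mapsto\big(x\mapsto\exp_{x}(X_{x})\big)$ used for $\text{Diff}(P)^{G}$ --- does not work directly, since for vertical $X$ the point $\exp_{x}(X_{x})$ need not remain in the fibre of $x$ unless the orbits are totally geodesic in $(P,h^{P})\,.$ One needs an adapted chart, e.g.\ built fibrewise from the group exponential in local trivialisations, which is what \cite{Michor} does. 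Your deferral to \cite{Michor} does cover this, but the genuine difficulty sits in the submanifold property, not only in the Fr\'echet-smoothness of $\Theta^{\pm1}$ where you locate it.
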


	In the following, we will often identify $\text{Gau}(P)$ and 
	$C^{\infty}(P,G)^{G}$ via the isomorphism defined in \eqref{definition iso gauge cinfty}\,.
	Let us introduce some terminology :
	\begin{description}
		\item[$\bullet$] let $\lambda\,:\,\text{Aut}(P)\times 
			\text{Gau}(P)\rightarrow \text{Aut}(P)$ be the right 
			action of the group $\text{Gau}(P)$ on $\text{Aut}(P)\,,$ defined by :
			\begin{eqnarray}
				\Big(\lambda(\varphi,f)\Big)(x):=
				\vartheta_{f(x)}\big(\varphi(x)\big)\,,
			\end{eqnarray}
			for $\varphi\in \text{Aut}(P),\,f\in 
			\text{Gau}(P)$ and $x\in P\,,$
		 \item[$\bullet$]  for $X\in \mathfrak{X}(P)^{G}\,,$ 
			let $\widetilde{X}\in \mathfrak{X}(B)$ be the vector 
			field defined by $\widetilde{X}_{b}:=\pi_{*_{x}}X_{x}$ 
			for $b\in B$ and where $x\in P$ is such that $\pi(x)=b\,,$
		 \item[$\bullet$] $\text{Diff}^{\,\sim}(B):=\{\widetilde{\varphi}
			\in \text{Diff}(B)\,\vert\,\varphi\in \text{Aut}(P)\}$\,\, 
			\,(according to \eqref{equation le projete d'un diffeo}, 
			$\text{Diff}^{\,\sim}(B)$ is a group).
	\end{description}
\begin{lemma}\label{lemme composantes connexes}
	The group $\text{Diff}^{\,\sim}(B)$ is a union of connected 
	components of $\text{Diff}\,(B)$ containing 
	$\text{Diff}^{\,0}(B)\,.$ In particular, $\text{Diff}^{\,\sim}(B)$ 
	is naturally a tame Fr\'echet Lie group.
 \end{lemma}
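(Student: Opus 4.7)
The plan is to establish the inclusion $\text{Diff}^{\,0}(B) \subseteq \text{Diff}^{\,\sim}(B)$ directly, using horizontal lifts of time-dependent vector fields, and then to deduce the union-of-components statement from the fact that $\text{Diff}^{\,\sim}(B)$ is a subgroup of $\text{Diff}(B)$ (it is the image of the group morphism $\overline{p}\,:\,\text{Aut}(P)\rightarrow \text{Diff}(B)\,,\,\varphi\mapsto\widetilde{\varphi}$).

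The key step is the lifting. Given $\psi\in\text{Diff}^{\,0}(B)\,,$ I would choose a smooth isotopy $\{\psi_{t}\}_{t\in[0,1]}$ from $\text{Id}_{B}$ to $\psi$ (possible since $B$ is compact and connected) and let $Z_{t}:=\dot{\psi}_{t}\circ\psi_{t}^{-1}\in\mathfrak{X}(B)$ be its time-dependent generator. The horizontal lift $Z_{t}^{*}\in\mathfrak{X}(P)^{G}$ is $G$-invariant and $\pi$-related to $Z_{t}$ (cf.\ Remark \ref{remarque metrique}); its time-dependent flow $\varphi_{t}$ on $P$ therefore inherits $G$-equivariance from the $G$-invariance of $Z_{t}^{*}$ (by uniqueness of solutions to ODEs), and satisfies $\pi\circ\varphi_{t}=\psi_{t}\circ\pi$ by the standard correspondence between $\pi$-related vector fields and their flows. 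Hence $\varphi_{1}\in\text{Aut}(P)$ and $\widetilde{\varphi}_{1}=\psi\,,$ so $\psi\in\text{Diff}^{\,\sim}(B)\,.$

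Once this inclusion is secured, the rest is purely algebraic-topological. Since $\text{Diff}^{\,0}(B)$ is a normal subgroup of $\text{Diff}(B)$ whose cosets are exactly the connected components of $\text{Diff}(B)\,,$ any subgroup $H$ with $\text{Diff}^{\,0}(B)\subseteq H$ is automatically a union of such cosets; applying this with $H=\text{Diff}^{\,\sim}(B)$ yields the first assertion. The tame Fr\'echet Lie group structure on $\text{Diff}^{\,\sim}(B)$ is then inherited from that on $\text{Diff}(B)$ since $\text{Diff}^{\,\sim}(B)$ is open in $\text{Diff}(B)\,.$ The only mild obstacle is the lifting step, where one must verify that the time-dependent flow of a $G$-invariant horizontal lift remains $G$-equivariant and $\pi$-projects correctly; both are routine generalizations of the autonomous statements found in \cite{Kobayashi-Nomizu}\,.
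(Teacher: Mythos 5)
Your proposal is correct and rests on exactly the same key construction as the paper's proof: lifting the time-dependent generator of an isotopy in $\text{Diff}(B)$ to its $G$-invariant horizontal lift on $P$ and flowing it to produce an automorphism projecting onto the target diffeomorphism. The only (cosmetic) difference is that you lift a path starting at $\text{Id}_{B}$ and then invoke the general fact that a subgroup containing the identity component is a union of its cosets, whereas the paper lifts a path starting at an arbitrary $\widetilde{\varphi}$ and composes with $\varphi$ directly; the two reductions are equivalent.
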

\begin{proof}
	Let $\varphi\in \text{Aut}(P)$ be an automorphism of $P$ and 
	$\psi$ an element of the connected component of 
	$\text{Diff}(B)$ containing $\widetilde{\varphi}\,.$ 
	To prove the lemma, it is sufficient to show that 
	$\psi\in \text{Diff}^{\,\sim}(B)\,.$\\
	Let $\psi_{t}$ be a smooth curve of $\text{Diff}(B)$ joining 
	$\widetilde{\varphi}$ and $\psi\,,$ i.e. :
	\begin{eqnarray*}
		\psi_{0}=\widetilde{\varphi}\,\,\,\,\,
		\text{and}\,\,\,\,\psi_{1}=\psi\,.
	\end{eqnarray*}
	 For $t_{0}\in [0,1]$ and $x_{0}\in B\,,$ we set
	\begin{eqnarray*}
		(X_{t_{0}})_{x_{0}}:=\dfrac{d}{dt}\bigg\vert_{t_{0}}\,\psi_{t}
		\Big(\psi_{t_{0}}^{-1}(x_{0})\Big)\,.
	 \end{eqnarray*}
	It turns out that $X$ is a time-dependant vector field on $B$ with 
	the property that the flow $\varphi_{t}^{X^{*}_{t}}$ of its 
	horizontal lift $X^{*}_{t}$ satisfies :
	\begin{eqnarray*}
		\widetilde{\big(\varphi^{X^{*}_{t}}_{t}\big)}=
		\varphi^{\widetilde{X}^{*}_{t}}_{t}=\varphi^{X_{t}}_{t}=
		\psi_{t}\circ\widetilde{\varphi}^{-1}\,.
	\end{eqnarray*}
	Thus,
	\begin{eqnarray*}
		\widetilde{\big(\varphi^{X^{*}_{t}}_{t}\circ\varphi\big)}=
		\widetilde{\varphi^{X^{*}_{t}}_{t}}\circ
		\widetilde{\varphi}=\psi_{t}\circ\widetilde{\varphi}^{-1}
		\circ\widetilde{\varphi}=\psi_{t}\,.
	\end{eqnarray*}
	It follows that $\psi=\psi_{1}=\widetilde{\big(\varphi^{X^{*}_{1}}_{1}
	\circ\varphi\big)}$ belongs to 
	$\text{Diff}^{\,\sim}(B)\,.$
\end{proof}
\begin{lemma}\label{lemme c'est bien lisse}
	If $\varphi,\psi\in \text{Aut}(P)$ satisfy 
	$\widetilde{\varphi}=\widetilde{\psi}\,,$ then the map
	\begin{eqnarray}\label{equation formule de lamda}
		\Lambda(\varphi,\psi)\,:\,P\rightarrow G\,,\,x\mapsto 
		\big(\vartheta_{\varphi(x)}^{-1}\big)(\psi(x))\,,
	 \end{eqnarray}
	is smooth.
\end{lemma}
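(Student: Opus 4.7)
The plan is to reduce smoothness of $\Lambda(\varphi,\psi)$ to a local computation in a principal bundle trivialization, where the orbit map inverse becomes explicit group multiplication.

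First I would observe that $\Lambda(\varphi,\psi)$ is well defined: the hypothesis $\widetilde{\varphi}=\widetilde{\psi}$ combined with \eqref{equation le projete d'un diffeo} gives $\pi(\varphi(x))=\pi(\psi(x))$ for every $x\in P$, so $\psi(x)\in \mathcal{O}_{\varphi(x)}$ and the expression $\vartheta_{\varphi(x)}^{-1}(\psi(x))$ makes sense.

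Next, smoothness being a local property, it suffices to check it near an arbitrary point $x_{0}\in P$. Pick a local trivialization $\Psi\,:\,\pi^{-1}(U)\to U\times G$ as in Section \ref{chapitre 2 section 2} around the common image $\pi(\varphi(x_{0}))=\pi(\psi(x_{0}))$, and write $\Psi(y)=(\pi(y),\tau(y))$ for the smooth map $\tau\,:\,\pi^{-1}(U)\to G$. The $G$-equivariance of $\Psi$ translates into the identity $\tau(\vartheta_{g}(y))=\tau(y)\,g$ for every $y\in\pi^{-1}(U)$ and $g\in G$, so that for any two points $y,z$ in the same fiber over $U$ the unique $h\in G$ with $\vartheta_{h}(y)=z$ is given by $h=\tau(y)^{-1}\tau(z)$. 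In other words, on $\pi^{-1}(U)$ we have the explicit formula
\begin{eqnarray}
\vartheta_{y}^{-1}(z)=\tau(y)^{-1}\,\tau(z).
\end{eqnarray}

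Choosing a neighborhood $W$ of $x_{0}$ small enough that $\varphi(W)\cup\psi(W)\subseteq \pi^{-1}(U)$ (possible by continuity of $\varphi$ and $\psi$), we obtain on $W$ the representation
\begin{eqnarray}
\Lambda(\varphi,\psi)(x)=\tau(\varphi(x))^{-1}\cdot\tau(\psi(x)),
\end{eqnarray}
which exhibits $\Lambda(\varphi,\psi)|_{W}$ as a composition of the smooth maps $\varphi$, $\psi$, $\tau$, the inversion $g\mapsto g^{-1}$ of $G$ and the multiplication of $G$. Hence $\Lambda(\varphi,\psi)$ is smooth near $x_{0}$, and since $x_{0}$ was arbitrary, globally smooth. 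The only mild subtlety — essentially bookkeeping — is to keep the conventions for the right action $\vartheta$ and the equivariance of $\tau$ consistent; once this is set up, the proof is essentially the trivialization computation above.
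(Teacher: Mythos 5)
Your proof is correct and follows essentially the same route as the paper's: both reduce the statement to a computation in a local trivialization, where the fiberwise ``division'' $\vartheta_{y}^{-1}(z)$ becomes explicit group multiplication (your formula $\tau(\varphi(x))^{-1}\tau(\psi(x))$ is exactly the paper's $g^{-1}\,s^{\varphi}(x)^{-1}\,s^{\psi}(x)\,g$ read through the chart). Your single-chart bookkeeping via the second component $\tau$ of the trivialization is a slightly cleaner packaging of the same argument.
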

\begin{proof}
	Let $U$ and $V$ be the domains of two trivializing charts of $B$
	$$
		\begin{diagram}
			\node{\pi^{-1}(U)} \arrow[2]{e,t}{\displaystyle\Psi_{U}} 
			\arrow{se,b}{\displaystyle\pi} 
			\node[2]{U\times G} \arrow{sw,b}{\displaystyle pr_{1}^{U}}\\
			\node[2]{U}
		\end{diagram}\,\,\,,\,\,\,
		\begin{diagram}
			\node{\pi^{-1}(V)} \arrow[2]{e,t}{\displaystyle\Psi_{V}} 
			\arrow{se,b}{\displaystyle\pi} 
			\node[2]{V\times G} \arrow{sw,b}{\displaystyle pr_{1}^{V}}\\
			\node[2]{V}
		\end{diagram}\,\,\,
	$$
	such that $\widetilde{\varphi}(U)\subseteq V\,.$ 
	As $\varphi$ and $\psi$ are $G$-equivariant, there 
	exists $s^{\varphi},\,s^{\psi}\in C^{\infty}(U,G)$ such that for all 
	$(x,g)\in U\times G\,,$ $(\Psi_{V}\circ\varphi\circ\Psi_{U}^{-1})(x,g)=
	(\widetilde{\varphi}(x),s^{\varphi}(x)\cdot g)$ and 
	$(\Psi_{V}\circ\psi\circ\Psi_{U}^{-1})(x,g)=
	(\widetilde{\varphi}(x),s^{\psi}(x)\cdot g)\,.$
	For $\Psi_{U}^{-1}(x,g)\in \pi^{-1}(U)\,,$ we then have :
	\begin{eqnarray*}
		&&\Big(\Lambda(\varphi,\psi)\circ\Psi_{U}^{-1}\Big)(x,g)=
			\Big(\vartheta^{-1}_{\big(\varphi\circ\Psi_{U}^{-1}\big)(x,g)}\Big)
			\Big((\psi\circ\Psi_{U}^{-1})(x,g)\Big)\\
		&\Rightarrow&\vartheta\bigg(\big(\varphi\circ\Psi_{U}^{-1}\big)(x,g),\,
			\big(\Lambda(\varphi,\psi)\circ\Psi_{U}^{-1}\big)(x,g)\bigg)=
			\Big(\psi\circ\Psi_{U}^{-1}\Big)(x,g)\\
		&\Rightarrow& \vartheta_{\Big(\Lambda(\varphi,\psi)
			\circ\Psi_{U}^{-1}\Big)(x,g)}\,\bigg(\Big(\Psi_{V}
			\circ\varphi\circ\Psi_{U}^{-1}\Big)(x,g)\bigg)=
			\Big(\Psi_{V}\circ\psi\circ\Psi_{U}^{-1}\Big)(x,g)\\
		&\Rightarrow&\vartheta_{\Big(\Lambda(\varphi,\psi)\circ
			\Psi_{U}^{-1}\Big)(x,g)}\,\Big(\widetilde{\varphi}(x),
			\,s^{\varphi}(x)\cdot g\Big)=
			(\widetilde{\varphi}(x),\,s^{\psi}(x)\cdot g)\\
		&\Rightarrow&\Big(\widetilde{\varphi}(x),\,
			s^{\varphi}(x)\cdot g\cdot(\Lambda(\varphi,\psi)
			\circ\Psi_{U}^{-1})(x,g)\Big)=
			(\widetilde{\varphi}(x),\,s^{\psi}(x)\cdot g)\,.
	\end{eqnarray*}
	It follows that $(\Lambda(\varphi,\psi)\circ\Psi_{U}^{-1})(x,g)=
	g^{-1}\cdot s^{\varphi}(x)^{-1}\cdot s^{\psi}(x)\cdot g\,,$ 
	from which it is easy to see that $\Lambda(\varphi,\psi)$ 
	is smooth\,.
\end{proof}
\begin{lemma}
	The action $\lambda\,:\,\text{Aut}(P)\times \text{Gau}(P)\rightarrow 
	\text{Aut}(P)$ is free and for all $\varphi\in 
	\text{Aut}(P)\,,$\,\, $(\overline{p})^{-1}(\overline{p}(\varphi))=
	\mathcal{O}_{\varphi}$ 
	(here $\mathcal{O}_{\varphi}$ denotes the orbit of $\varphi$ 
	for the action $\lambda$)\,.
\end{lemma}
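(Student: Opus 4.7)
My plan is to prove the two assertions separately, using the isomorphism $\textup{Gau}(P)\cong C^{\infty}(P,G)^{G}$ from \eqref{definition iso gauge cinfty} to identify elements of the gauge group with equivariant $G$-valued functions.

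\textbf{Freeness.} Suppose $\lambda(\varphi,f)=\varphi$ for some $\varphi\in\textup{Aut}(P)$ and $f\in \textup{Gau}(P)$. By the defining formula of $\lambda\,,$ this means $\vartheta_{f(x)}(\varphi(x))=\varphi(x)$ for every $x\in P\,.$ Since the right $G$-action on the total space of a principal bundle is free, one gets $f(x)=e$ for all $x\in P\,,$ and hence $f$ corresponds to the identity element of $\textup{Gau}(P)$ under the isomorphism \eqref{definition iso gauge cinfty}. This proves freeness.

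\textbf{One inclusion $\mathcal{O}_{\varphi}\subseteq (\overline{p})^{-1}(\overline{p}(\varphi))\,.$} If $\psi=\lambda(\varphi,f)$ for some $f\in\textup{Gau}(P)\,,$ then for every $x\in P$ we have $\pi(\psi(x))=\pi(\vartheta_{f(x)}(\varphi(x)))=\pi(\varphi(x))\,,$ because $\pi$ is constant along the $G$-orbits. By \eqref{equation le projete d'un diffeo} this gives $\widetilde{\psi}=\widetilde{\varphi}\,,$ i.e. $\overline{p}(\psi)=\overline{p}(\varphi)\,.$

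\textbf{The reverse inclusion.} Let $\psi\in \textup{Aut}(P)$ satisfy $\widetilde{\psi}=\widetilde{\varphi}\,.$ Using Lemma \ref{lemme c'est bien lisse}, the map $\Lambda(\varphi,\psi)\,:\,P\rightarrow G\,,\,x\mapsto \vartheta_{\varphi(x)}^{-1}(\psi(x))$ is smooth, and by construction $\psi(x)=\vartheta(\varphi(x),\Lambda(\varphi,\psi)(x))$ for all $x\in P\,.$ It remains to check that $\Lambda(\varphi,\psi)\in C^{\infty}(P,G)^{G}\,,$ that is, $\Lambda(\varphi,\psi)\circ \vartheta_{g}=c_{g^{-1}}\circ \Lambda(\varphi,\psi)$ for every $g\in G\,.$ Writing the right action as $\vartheta_{g}(y)=y\cdot g\,,$ the $G$-equivariance of $\varphi$ and $\psi$ gives, for $x\in P$ and $g\in G\,,$
\begin{eqnarray*}
(x\cdot g)\cdot \Lambda(\varphi,\psi)(\vartheta_{g}(x))=\psi(x\cdot g)=\psi(x)\cdot g=\big(\varphi(x)\cdot \Lambda(\varphi,\psi)(x)\big)\cdot g\,,
\end{eqnarray*}
which using associativity of the action and freeness yields $\Lambda(\varphi,\psi)(\vartheta_{g}(x))=g^{-1}\Lambda(\varphi,\psi)(x)\,g=c_{g^{-1}}(\Lambda(\varphi,\psi)(x))\,.$ Thus $\Lambda(\varphi,\psi)$ defines an element of $\textup{Gau}(P)\,,$ and by construction $\lambda(\varphi,\Lambda(\varphi,\psi))=\psi\,,$ so $\psi\in \mathcal{O}_{\varphi}\,.$

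The only step that requires genuine care is the equivariance verification in the third paragraph, where one must keep track of the right-action conventions to see that conjugation by $g^{-1}$ appears on the gauge-group side; everything else is an immediate unwinding of definitions, and smoothness of $\Lambda(\varphi,\psi)$ is already supplied by Lemma \ref{lemme c'est bien lisse}.
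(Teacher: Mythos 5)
Your proof is correct and follows essentially the same route as the paper: freeness from freeness of the $G$-action on $P$, and the nontrivial inclusion via the map $\Lambda(\varphi,\psi)$ of Lemma \ref{lemme c'est bien lisse}, with you usefully writing out the equivariance check that the paper leaves as ``one can check''. Note only a small typo in your displayed equation, where the left-hand side should read $\varphi(x\cdot g)\cdot\Lambda(\varphi,\psi)(\vartheta_{g}(x))$ rather than $(x\cdot g)\cdot\Lambda(\varphi,\psi)(\vartheta_{g}(x))$; the subsequent computation already uses the correct expression.
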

\begin{proof} 
	The freeness of $\lambda$ is obvious.\\
	Let us fix $\phi\in (\overline{p})^{-1}(\overline{p}(\varphi))\,.$ 
	Since $\widetilde{\phi}=\widetilde{\varphi}\,,$ there exists a unique 
	map $f\in C^{\infty}(P,G)$ satisfying $f(x):=\vartheta_{\varphi(x)}^{-1}
	\big(\phi(x)\big)$ for all $x\in P\,.$ According to Lemma 
	\ref{lemme c'est bien lisse}, this map is smooth and one can 
	check that $f\in \text{Gau}(P)$ and also that $\phi=\lambda(\varphi,f)\,.$ 
	Thus, $\phi\in \mathcal{O}_{\varphi}$ and $ (\overline{p})^{-1}
	(\overline{p}(\varphi)) \subseteq\mathcal{O}_{\varphi}\,.$ 
	The inverse inclusion being trivial, the lemma follows.
\end{proof}
\begin{lemma}\label{lemme section locale pppp}
	The map $\text{Aut}(P)\overset{\overline{p}}{\longrightarrow}
	\text{Diff}^{\,\sim}(B)$  is smooth and 
	admits local smooth sections. 
\end{lemma}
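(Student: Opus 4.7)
\textbf{Plan for smoothness of $\overline{p}$.} The defining relation $\widetilde{\varphi}\circ\pi=\pi\circ\varphi$ recovers $\widetilde{\varphi}$ locally by evaluation on any smooth local section of $\pi$: if $\sigma_U\colon U\to P$ is a smooth section over a trivializing open $U\subset B$, then $\widetilde{\varphi}|_U=\pi\circ\varphi\circ\sigma_U$. Since composition is smooth in the Fr\'echet Lie group structures on $\text{Aut}(P)$ and $\text{Diff}(B)$ (standard, see \cite{Hamilton}, \cite{Kriegl-Michor}), this exhibits $\overline{p}$ as a smooth map.

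\textbf{Plan for a local section around $\varphi_0\in\text{Aut}(P)$.} Let $\psi_0:=\widetilde{\varphi}_0$. I would combine the Riemannian exponential of the $G$-invariant metric $h^P$ with horizontal lifting via the connection $\theta$. Using the standard chart on $\text{Diff}(B)$ centered at $\psi_0$ (built from $\exp^B$, as in the proof of Proposition \ref{diffG}), any $\psi$ close enough to $\psi_0$ has a unique representation $\psi(b)=\exp^B_{\psi_0(b)}\bigl(X_\psi(\psi_0(b))\bigr)$ for a small $X_\psi\in\mathfrak{X}(B)$, depending smoothly on $\psi$. I then define
\begin{equation*}
s(\psi)(x)\;:=\;\exp^P_{\varphi_0(x)}\bigl(X_\psi^{\,*}(\varphi_0(x))\bigr),
\end{equation*}
where $X_\psi^{\,*}\in\mathfrak{X}(P)^G$ is the horizontal lift of $X_\psi$. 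Smoothness of $s$ follows from the smoothness of the chart $\psi\mapsto X_\psi$, of the horizontal lift (a continuous linear operator $\mathfrak{X}(B)\to\mathfrak{X}(P)^G$), and of $\exp^P$.

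\textbf{Verification.} Three things must be checked. First, for $X_\psi$ small, $s(\psi)$ is a perturbation of $\varphi_0$ and hence a diffeomorphism of $P$. Second, $s(\psi)$ is $G$-equivariant: since $h^P$ is $G$-invariant, each $\vartheta_g$ is an isometry of $(P,h^P)$ and therefore commutes with $\exp^P$; together with the $G$-invariance of $X_\psi^{\,*}$ and of $\varphi_0$, this yields $s(\psi)\circ\vartheta_g=\vartheta_g\circ s(\psi)$. Third, $\overline{p}(s(\psi))=\psi$: since $\pi\colon(P,h^P)\to(B,h^B)$ is a Riemannian submersion (Lemma \ref{metrique sur P}\,(ii)), horizontal geodesics project to geodesics, so $\pi\circ\exp^P_y(V)=\exp^B_{\pi(y)}(\pi_{*}V)$ for every horizontal $V$; applying this with $V=X_\psi^{\,*}(\varphi_0(x))$ and using $\pi\circ\varphi_0=\psi_0\circ\pi$ gives $\pi(s(\psi)(x))=\psi(\pi(x))$, i.e.\ $\widetilde{s(\psi)}=\psi$.

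\textbf{Main obstacle.} The only delicate point is the $G$-equivariance of $s(\psi)$; it rests crucially on the naturality of the Riemannian exponential under isometries, which is precisely why one needs the full $G$-invariance of $h^P$ (and not merely of $\mu^P$) to perform the construction. Everything else amounts to routine bookkeeping in the exponential charts of $\text{Aut}(P)$ and $\text{Diff}(B)$.
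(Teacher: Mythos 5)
Your proposal is correct and follows essentially the same route as the paper: both arguments hinge on the identity $\pi\circ\exp^{P}_{y}(V)=\exp^{B}_{\pi(y)}(\pi_{*}V)$ for horizontal $V$ (Riemannian submersion, Lemma \ref{metrique sur P}), and both build the section by horizontally lifting the vector field from the chart on $\text{Diff}(B)$ and applying the $G$-invariant exponential of $h^{P}$. The only cosmetic differences are that the paper reduces to a neighbourhood of the identity using that $\overline{p}$ is a group morphism and reads off smoothness from the local expression of $\overline{p}$ as the linear projection $\mathfrak{X}(B)\oplus C^{\infty}(P,\mathfrak{g})^{G}\rightarrow\mathfrak{X}(B)\,,$ whereas you work at an arbitrary $\varphi_{0}$ and argue smoothness of $\overline{p}$ via composition with a local section of $\pi\,.$
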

\begin{proof}
	The map $\overline{p}$ being a morphism, it is sufficient to show 
	that there exists a local smooth section of 
	$\overline{p}$ in a neighbourhood of $Id_{B}$ in 
	$\text{Diff}(B)\,.$\\
	Recall that $\pi\,:\,(P,h^{P})\rightarrow(B,h^{B})$ is a 
	Riemannian submersion (Lemma \ref{metrique sur P})\,. Therefore, 
	\begin{eqnarray}\label{equation submersion riemannienne}
		\pi\big(\text{exp}(X_{x})\big)=
		\text{exp}_{\pi(x)}\big(\widetilde{X}_{\pi(x)}\big)\,,
	\end{eqnarray}
	for all $X\in \mathfrak{X}(P)^{G}$ and $x\in P\,.$ 
	For a $G$-invariant vector field $X\in \mathfrak{X}(P)^{G}$ 
	sufficiently closed to 0, the map $\varphi\,:\,P\rightarrow P,\,x
	\mapsto\text{exp}_{x}(X_{x})$ is a diffeomorphism of $P$ 
	(observe that $\varphi\in \text{Aut}(P)$ since $X$ and $h^{P}$ 
	are $G-$invariant)\,. In view of \eqref{equation submersion riemannienne}, 
	$\overline{p}(\varphi)$ is simply the map $B\rightarrow B\,,\,x
	\mapsto\text{exp}_{x}(\widetilde{X}_{x})\,.$ It follows that the 
	local expression of $\overline{p}$ in the standard charts of 
	$\text{Aut}(P)$ and $\text{Diff}^{\,\sim}(B)\,,$ are the 
	projection $\mathfrak{X}(P)^{G}\cong \mathfrak{X}(B)\oplus 
	C^{\infty}(P,\mathfrak{g})^{G}\rightarrow \mathfrak{X}(B)$ on the 
	first factor (see \eqref{definition phi}). Hence this map is 
	smooth and admits local sections.
\end{proof}
\begin{theoreme}[\cite{Michor}]\label{theoreme de Michor}
	The group $\text{Aut}(P)$ is an extension of the group 
	$\text{Diff}^{\,\sim}(B)$ by the gauge group $\text{Gau}(P)\,:$  
	\begin{eqnarray}\label{the short exacte sequence with S}
		\{e\}\longrightarrow \text{Gau}(P)\longrightarrow\text{Aut}(P)
		\longrightarrow\text{Diff}^{\,\sim}(B)\longrightarrow\{e\}\,\,.
	\end{eqnarray}
\end{theoreme}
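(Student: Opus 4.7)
The plan is to assemble the statement from the lemmas and propositions immediately preceding it, since almost all the analytic content has already been extracted. Concretely, I need to verify four things in the category of Fr\'echet Lie groups: (i) the three spaces are Fr\'echet Lie groups; (ii) the two arrows are smooth group morphisms; (iii) the sequence is exact at each position as a sequence of abstract groups; and (iv) the projection admits local smooth sections, so that the extension is meaningful in the Fr\'echet setting and not merely as an abstract group extension.

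For (i), $\text{Gau}(P)$ is a closed Fr\'echet Lie subgroup of $\text{Aut}(P)$ by the Michor proposition already cited. The middle term $\text{Aut}(P)=\text{Diff}(P)^{G}$ is a tame Fr\'echet Lie group by Proposition \ref{diffG} applied to $M=P$ with its right $G$-action. Finally, $\text{Diff}^{\,\sim}(B)$ is, by Lemma \ref{lemme composantes connexes}, a union of connected components of $\text{Diff}(B)$ containing $\text{Diff}^{0}(B)$, hence carries the induced tame Fr\'echet Lie group structure. For (ii), the inclusion $\text{Gau}(P)\hookrightarrow\text{Aut}(P)$ is smooth because $\text{Gau}(P)$ is a Lie subgroup, and $\overline{p}$ is a smooth group morphism by Lemma \ref{lemme section locale pppp} together with the defining relation $\widetilde{\varphi}\circ\pi=\pi\circ\varphi$.

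For (iii), exactness at $\text{Gau}(P)$ is the injectivity of the inclusion; exactness at $\text{Aut}(P)$ is the tautological identity
\[
\ker(\overline{p})=\{\varphi\in\text{Aut}(P)\,\vert\,\widetilde{\varphi}=Id_{B}\}=\text{Gau}(P),
\]
which is precisely the definition of $\text{Gau}(P)$; and exactness at $\text{Diff}^{\,\sim}(B)$ is simply the definition of $\text{Diff}^{\,\sim}(B)$ as the image of $\overline{p}$. For (iv), Lemma \ref{lemme section locale pppp} furnishes a local smooth section of $\overline{p}$ on some neighbourhood of $Id_{B}$, and by right-translation this produces local smooth sections on a neighbourhood of every point of $\text{Diff}^{\,\sim}(B)$. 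Combined with the fact that the action $\lambda$ of $\text{Gau}(P)$ on $\text{Aut}(P)$ is free with orbits exactly the fibers of $\overline{p}$ (established in the lemma just before Lemma \ref{lemme section locale pppp}), this upgrades the short exact sequence to a $\text{Gau}(P)$-principal bundle over $\text{Diff}^{\,\sim}(B)$, which is precisely the notion of extension intended in the statement.

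The hard part has already been done: the real obstacle was proving Lemma \ref{lemme section locale pppp}, which required the Riemannian submersion property of $\pi$ from Lemma \ref{metrique sur P} and the explicit chart identification $\mathfrak{X}(P)^{G}\cong\mathfrak{X}(B)\oplus C^{\infty}(P,\mathfrak{g})^{G}$ of Section 3, together with Lemma \ref{lemme c'est bien lisse} to identify fibers of $\overline{p}$ with orbits of $\lambda$. Once these are in hand, the proof of the theorem is purely a matter of bookkeeping: write down the sequence, point to each of the four items above, and invoke the relevant lemma or proposition.
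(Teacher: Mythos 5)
Your proposal is correct and follows essentially the same route as the paper: exactness is immediate from the definitions, and the principal bundle structure is obtained from the local sections of Lemma \ref{lemme section locale pppp}, the freeness of $\lambda$ with orbits equal to the fibers of $\overline{p}$, and the smoothness of $\Lambda$ from Lemma \ref{lemme c'est bien lisse}, then transported by translations. The only cosmetic difference is that the paper writes out the local trivialization $\varphi\mapsto\big(\widetilde{\varphi},\Lambda(\sigma(\widetilde{\varphi}),\varphi)\big)$ explicitly and checks smoothness, bijectivity and $\text{Gau}(P)$-equivariance, whereas you leave this standard construction implicit.
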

\begin{remarque}
	Theorem \ref{theoreme de Michor} means that the above sequence 
	is a short exact sequence of Lie groups such that 
	$\text{Aut}(P)$ is a $\text{Gau}(P)$-principal bundle over 
	the group $\text{Diff}^{\,\sim}(B)$ (see \cite{Neeb}). 
\end{remarque}
\begin{proof}
	The sequence \eqref{the short exacte sequence with S} is obviously exact.\\
	Let us show that we have a principal bundle. 
	Let $(\Phi(\mathcal{U}),\Phi^{-1})$ be the standard chart of 
	$\text{Diff}^{\,\sim}(B)\,,$ i.e.,  
	$\mathcal{U}\subseteq \mathfrak{X}(B)$ and 
	$\Phi^{-1}$ is defined by $\Phi^{-1}(X)(x):=\text{exp}_{x}(X_{x})\,.$
	We also take a section $\sigma\,:\,\Phi(\mathcal{U})
	\rightarrow \text{Aut}(P)$ of $\overline{p}\,.$ We want 
	to construct a fiber chart of $\text{Aut}(P)$ near the 
	identity using $\Phi(\mathcal{U})\,.$ Let us consider 
	the following diagram :
	$$
		\begin{diagram}
		\node{\overline{p}^{-1}(\Phi(\mathcal{U}))} \arrow[2]{e,t}
		{\displaystyle\Psi_{\Phi(\mathcal{U})}} \arrow{se,b}
		{\displaystyle\overline{p}} \node[2]{\Phi(\mathcal{U})\times 
		\text{Gau}(P)} \arrow{sw,b}{\displaystyle pr_{1}}\\
		\node[2]{\Phi(\mathcal{U}))}
		\end{diagram}
	$$
	where $\Psi_{\Phi(\mathcal{U})}$ is defined by 
	$\Psi_{\Phi(\mathcal{U})}(\varphi):=\Big(\widetilde{\varphi},\,
	\Lambda\big(\sigma(\widetilde{\varphi}),\varphi\big)\Big)$ 
	for $\varphi\in \text{Aut}(P)$ (see Lemma \ref{lemme c'est bien lisse} 
	for the definition of $\Lambda$)\,. Thus it goes to show that 
	$\Psi_{\Phi(\mathcal{U})}$ is :
	\begin{description}
		\item[$\bullet$] smooth according to Lemma 
			\ref{lemme c'est bien lisse} and the characterization of 
			smooth curves in a space of sections,
		\item[$\bullet$] bijective, the inverse being 
			$$
				\Phi(\mathcal{U})\times \text{Gau}(P)\rightarrow 
				(\overline{p})^{-1}(\Phi(\mathcal{U})),\,\,\,\,
				(\chi,f)\mapsto \lambda(\sigma(\chi),\,f)\,,
			$$
		\item[$\bullet$] $\text{Gau}(P)$-equivariant\,.
	\end{description}
	It follows that $\Big((\overline{p})^{-1}(\Phi(\mathcal{U})),\,
	\Psi_{\Phi(\mathcal{U})}\Big)$ is a fiber chart of 
	$\text{Aut}(P)\,,$ and using translations, $\text{Aut}(P)$ 
	becomes a $\text{Gau}(P)$-principal fiber bundle with base space 
	$\text{Diff}^{\,\sim}(B)\,.$
\end{proof}
	We now return to the case of automorphisms of $P$ preserving 
	$\mu^{P}\,.$ Let us set $p\,:\,\text{SAut}(P,\mu^{P})
	\rightarrow\text{SDiff}^{\,\sim}(B,V\mu^{B}):=\{\widetilde{\varphi}\in 
	\text{SDiff}(B,V\mu^{B})\,\vert\,\varphi\in \text{SAut}(P,\mu^{P})\},$ 
	$\,\varphi\rightarrow\widetilde{\varphi}\,.$
\begin{lemma} 
	$\text{}$\,\,\,\,The group $\text{SDiff}^{\,\sim}(B,V\mu^{B})$ is 
	a union of connected components of $\text{SDiff}(B,V\mu^{B})$ 
	containing $\text{SDiff}^{\,0}(B,V\mu^{B})\,.$ 
	In particular, $\text{SDiff}^{\,\sim}(B,V\mu^{B})$ 
	is a tame Fr\'echet Lie group. 
\end{lemma}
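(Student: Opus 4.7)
The plan is to imitate the proof of Lemma \ref{lemme composantes connexes}, but working entirely inside the volume-preserving setting. The crucial extra ingredient will be Proposition \ref{proposition isomorphime avec s}, which tells us that the horizontal lift of a $V\mu^{B}$-preserving vector field on $B$ is automatically a $\mu^{P}$-preserving $G$-invariant vector field on $P$.

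First I would fix $\varphi\in \text{SAut}(P,\mu^{P})$ and pick an arbitrary element $\psi$ lying in the connected component of $\text{SDiff}(B,V\mu^{B})$ that contains $\widetilde{\varphi}$. Choose a smooth path $\psi_{t}$ in $\text{SDiff}(B,V\mu^{B})$ joining $\widetilde{\varphi}$ to $\psi$ and define the time-dependent vector field $(X_{t_{0}})_{x_{0}}:=\frac{d}{dt}\big\vert_{t_{0}}\psi_{t}(\psi_{t_{0}}^{-1}(x_{0}))$. Since $\psi_{t}$ preserves $V\mu^{B}$, differentiating $\psi_{t}^{*}(V\mu^{B})=V\mu^{B}$ yields $\mathcal{L}_{X_{t}}(V\mu^{B})=0$, i.e.\ $X_{t}\in \mathfrak{X}(B,V\mu^{B})$ for every $t$.

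Next I would lift $X_{t}$ to its horizontal $G$-invariant lift $X_{t}^{*}\in \mathfrak{X}(P)^{G}$. Applying Proposition \ref{proposition isomorphime avec s} with $f\equiv 0$, we have $\Phi^{-1}(X_{t},0)=X_{t}^{*}$, and therefore $X_{t}^{*}\in \mathfrak{X}(P,\mu^{P})^{G}$. Because $P$ is compact, the time-dependent flow $\varphi_{t}^{X_{t}^{*}}$ exists and lies in $\text{SAut}(P,\mu^{P})$. Exactly as in the proof of Lemma \ref{lemme composantes connexes}, the projection formula $\pi\circ\varphi_{t}^{X_{t}^{*}}=\varphi_{t}^{X_{t}}\circ\pi$ gives $\widetilde{(\varphi_{t}^{X_{t}^{*}}\circ\varphi)}=\psi_{t}$, so in particular $\psi=\widetilde{(\varphi_{1}^{X_{1}^{*}}\circ\varphi)}\in \text{SDiff}^{\,\sim}(B,V\mu^{B})$. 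This proves that $\text{SDiff}^{\,\sim}(B,V\mu^{B})$ is saturated with respect to the connected-component relation of $\text{SDiff}(B,V\mu^{B})$, hence it is a union of connected components. Since $Id_{B}=\widetilde{Id_{P}}$ belongs to $\text{SDiff}^{\,\sim}(B,V\mu^{B})$, the whole identity component $\text{SDiff}^{\,0}(B,V\mu^{B})$ is contained in it. The tame Fr\'echet Lie group structure is then inherited from that of $\text{SDiff}(B,V\mu^{B})$ (established by applying Proposition \ref{SDiff G} to the trivial $G$-action on $B$).

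The only delicate point is the second step: verifying that the lifted time-dependent vector field $X_{t}^{*}$ generates a genuine flow in $\text{SAut}(P,\mu^{P})$. This is standard because $P$ is compact and $X_{t}^{*}$ depends smoothly on $t$, but one should note that the entire argument is formal in spirit, relying on the compatibility between horizontal lifting and the $\mu^{P}$-divergence condition packaged in Proposition \ref{proposition isomorphime avec s}; no Nash-Moser machinery is needed here, which is consistent with the remark that from this section onwards we work in the ordinary Fr\'echet Lie group category.
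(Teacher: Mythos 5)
Your proof is correct, but it takes a genuinely different route from the paper's. The paper's proof is a two-line reduction: given $\psi$ in the connected component of $\text{SDiff}(B,V\mu^{B})$ containing $\widetilde{\varphi}$, it first invokes Lemma \ref{lemme composantes connexes} (since $\psi$ and $\widetilde{\varphi}$ are a fortiori in the same component of $\text{Diff}(B)$) to produce $\psi_{1}\in\text{Aut}(P)$ with $\widetilde{\psi_{1}}=\psi$, and then applies Proposition \ref{proposition le secret du fibre principal} to upgrade $\psi_{1}$ to an element of $\text{SAut}(P,\mu^{P})$ for free. You instead rerun the path-lifting argument entirely inside the volume-preserving category, using Proposition \ref{proposition isomorphime avec s} (with $f\equiv 0$) at the infinitesimal level to see that the horizontal lift $X_{t}^{*}$ of a $V\mu^{B}$-divergence-free field is $\mu^{P}$-divergence-free, so that the lifted flow already lives in $\text{SAut}(P,\mu^{P})$. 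Both arguments are sound. The paper's is shorter because Proposition \ref{proposition le secret du fibre principal} absorbs the volume-preservation bookkeeping at the group level; yours is more self-contained at the Lie-algebra level but needs the slightly stronger input of a \emph{smooth path inside} $\text{SDiff}(B,V\mu^{B})$ rather than merely inside $\text{Diff}(B)$. That input is available, but note that it rests on $\text{SDiff}(B,V\mu^{B})$ being a manifold (hence locally smoothly path-connected), i.e.\ on Proposition \ref{SDiff G} and thus ultimately on Nash--Moser --- so your closing remark that no Nash--Moser machinery is needed is slightly overstated: it is not needed in the argument itself, but it is needed to make sense of the connected components you start from, exactly as in the paper.
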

\begin{proof} 
	Fix $\varphi\in \text{SAut}(P,\mu^{P})$ and let $\psi$ be an 
	element of the connected component of $\text{SDiff}(B,V\mu^{B})$ 
	containing $p(\varphi)\,.$ As $\text{SDiff}(B,V\mu^{B})\subseteq 
	\text{Diff}(B)\,,$ one may, as in Lemma \ref{lemme composantes connexes}, 
	find $\psi_{1}\in \text{Aut}(P)$ such that $\widetilde{\psi_{1}}=\psi\,.$ 
	But, as $\psi\in \text{SDiff}(B,V\mu^{B})\,,$ Proposition 
	\ref{proposition le secret du fibre principal} implies that 
	$\psi_{1}\in \text{SAut}(P,\mu^{P})$ and thus $\psi\in 
	\text{SDiff}^{\,\sim}(B,V\mu^{B})\,.$
\end{proof}
	Note that the group $\text{Gau}(P)$ also acts on 
	$\text{SAut}(P,\mu^{P})\,:$ for $f\in \text{Gau}(P)$ and 
	$\varphi\in \text{SAut}(P,\mu^{P})\,,$ 
	$$
		\widetilde{\lambda(\varphi,f)}=\widetilde{(\vartheta_{f(\,.\,)}
		\circ \varphi)}=\widetilde{\vartheta_{f(\,.\,)}}\circ
		\widetilde{\varphi}=\widetilde{\varphi}\in \text{SDiff}(B,V\mu^{B})\,,
	$$
	this means, according to Proposition 
	\ref{proposition le secret du fibre principal}, that 
	$\lambda(\varphi,f)\in$ $ \text{SAut}(P,\mu^{P})\,.$ In this context, 
	all the previous analogous lemmas remain valid. 
	For example, existence of local sections of 
	$p\,:\,\text{SAut}(P,\mu^{P})\rightarrow \text{SDiff}^{\,\sim}(B,V\mu^{B})$ 
	can be obtained from Lemma \ref{lemme section locale pppp} 
	(it suffices to take local sections given by Lemma 
	\ref{lemme section locale pppp} and to restrict them to 
	$\text{SDiff}^{\,\sim}(B,V\mu^{B})$)\,. Therefore, 
\begin{theoreme}\label{theoreme de moi!!}
	The Lie group $\text{SAut}(P,\mu^{P})$ is an extension of the 
	Lie group $\text{SDiff}^{\,\sim}(B,V\mu^{B})$ by the gauge 
	group $\text{Gau}(P)\,:$  
	\begin{eqnarray}\label{the short exacte sequence}
		\{e\}\longrightarrow \text{Gau}(P)\longrightarrow
		\text{SAut}(P,\mu^{P})\longrightarrow
		\text{SDiff}^{\,\sim}(B,V\mu^{B})\longrightarrow\{e\}\,\,.
	\end{eqnarray}
\end{theoreme}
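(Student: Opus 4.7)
The plan is to mimic the proof of Theorem \ref{theoreme de Michor} step by step, using Proposition \ref{proposition le secret du fibre principal} as the bridge that ensures every construction made for $\text{Aut}(P)$ restricts properly to $\text{SAut}(P,\mu^{P})$.

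First I would verify exactness of the sequence \eqref{the short exacte sequence}. The kernel of $p$ consists of those $\varphi\in\text{SAut}(P,\mu^{P})$ with $\widetilde{\varphi}=Id_{B}$, i.e., $\varphi\in\text{Gau}(P)$; conversely, any $\varphi\in\text{Gau}(P)$ satisfies $\widetilde{\varphi}=Id_{B}\in\text{SDiff}(B,V\mu^{B})$, so Proposition \ref{proposition le secret du fibre principal} gives $\varphi\in\text{SAut}(P,\mu^{P})$. Surjectivity of $p$ onto $\text{SDiff}^{\,\sim}(B,V\mu^{B})$ holds by definition of the target group.

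Next I would establish the principal bundle structure over $\text{SDiff}^{\,\sim}(B,V\mu^{B})$. The $\text{Gau}(P)$-action $\lambda$ restricts to $\text{SAut}(P,\mu^{P})$ because, as noted just before the theorem, $\widetilde{\lambda(\varphi,f)}=\widetilde{\varphi}$ is still in $\text{SDiff}(B,V\mu^{B})$, and so Proposition \ref{proposition le secret du fibre principal} forces $\lambda(\varphi,f)\in\text{SAut}(P,\mu^{P})$. Freeness and the identification of fibers with orbits follow verbatim from the $\text{Aut}(P)$ case. To produce local sections of $p$ near the identity, I would take a local smooth section $\sigma$ of $\overline{p}$ provided by Lemma \ref{lemme section locale pppp} on a neighbourhood $\mathcal{U}$ of $Id_{B}$ in $\text{Diff}^{\,\sim}(B)$ and \emph{restrict} it to $\mathcal{U}\cap\text{SDiff}^{\,\sim}(B,V\mu^{B})$; since any $\chi$ in this intersection lies in $\text{SDiff}(B,V\mu^{B})$, Proposition \ref{proposition le secret du fibre principal} guarantees that $\sigma(\chi)\in\text{SAut}(P,\mu^{P})$, so $\sigma$ restricts to a smooth local section of $p$. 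Finally, using this restricted $\sigma$, I would form the analogue of $\Psi_{\Phi(\mathcal{U})}$, namely
\[
p^{-1}(\mathcal{V})\longrightarrow\mathcal{V}\times\text{Gau}(P),\qquad
\varphi\longmapsto\bigl(\widetilde{\varphi},\,\Lambda(\sigma(\widetilde{\varphi}),\varphi)\bigr),
\]
where $\mathcal{V}:=\mathcal{U}\cap\text{SDiff}^{\,\sim}(B,V\mu^{B})$, and check smoothness, bijectivity (with inverse $(\chi,f)\mapsto\lambda(\sigma(\chi),f)$) and $\text{Gau}(P)$-equivariance exactly as in Theorem \ref{theoreme de Michor}. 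Translating this chart by elements of $\text{SAut}(P,\mu^{P})$ yields fiber charts everywhere.

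The only non-routine point—which I regard as the main obstacle—is checking that each object built in the $\text{Aut}(P)$ setting actually stays inside $\text{SAut}(P,\mu^{P})$ after the restriction. This is entirely taken care of by the two-sided criterion of Proposition \ref{proposition le secret du fibre principal}: $\varphi\in\text{SAut}(P,\mu^{P})\Leftrightarrow\widetilde{\varphi}\in\text{SDiff}(B,V\mu^{B})$. Once this is invoked for the $\text{Gau}(P)$-action, for the local section, and for the transition maps, the rest of the argument is a verbatim copy of the proof of Theorem \ref{theoreme de Michor}, and the Lie group extension structure \eqref{the short exacte sequence} follows.
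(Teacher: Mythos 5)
Your proposal is correct and follows essentially the same route as the paper: the paper's own argument consists precisely of observing that the $\text{Gau}(P)$-action restricts to $\text{SAut}(P,\mu^{P})$ via Proposition \ref{proposition le secret du fibre principal}, that local sections of $p$ are obtained by restricting those of Lemma \ref{lemme section locale pppp} to $\text{SDiff}^{\,\sim}(B,V\mu^{B})$, and that all remaining steps of the proof of Theorem \ref{theoreme de Michor} carry over verbatim. Your write-up simply makes explicit the details that the paper leaves as ``all the previous analogous lemmas remain valid.''
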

\begin{remarque} 
	One may recover Theorem \ref{theorem equation d'euler} 
	from Theorem \ref{theoreme de moi!!} using the description of 
	geodesics on extensions of Lie groups as given in \cite{Vizman}\,.
\end{remarque}
$\text{}$\\
\textbf{{Acknowledgments}}

	I would like to give special 
	thanks to Tilmann Wurzbacher for his careful and critical 
	reading of the ``French version" of this paper (i.e. the corresponding 
	chapter of my thesis). I would also like to thank Tudor Ratiu for 
	having pointed out to me the relation between the Euler equation of 
	the group $\text{SAut}(P,\mu^{P})$ and the Euler-Yang-Mills equations\,.\\
	This work was done with the financial support of the Fonds National 
	Suisse de la Recherche Scientifique under the grant PIO12--120974/1.

\end{document}